 \providecommand{\og}{``}
\providecommand{\fg}{''}
\def\crulefill{\leavevmode\leaders\hrule height 1pt\hfill\kern 0pt}
\long\def\QUERY#1{%
\leavevmode\newline%
\noindent$\star\star\star$\thinspace\textsf{Comment/Query}\crulefill\newline%
   \space #1\newline\hbox to 120mm{\crulefill}$\star\star\star$\newline}
\newtheorem{Theorem}{Theorem}[section]
\newtheorem{Lemma}[Theorem]{Lemma}
\newtheorem{Cor}[Theorem]{Corollary}
\newtheorem{Prop}[Theorem]{Proposition}
\theoremstyle{definition}
\newtheorem{Defn}[Theorem]{Definition}
\newtheorem{rem}[Theorem]{Remark}
\newtheorem{rems}[Theorem]{Remarks}
\numberwithin{equation}{section}
\theoremstyle{definition}
\newtheorem{THEOREM}{Theorem}
\def\enumerate{\begingroup\ifnum\@enumdepth>3\@toodeep\else
      \advance\@enumdepth\@ne
      \edef\@enumctr{enum\romannumeral\the\@enumdepth}%
      \topsep\z@\parskip\z@
      \list{\csname label\@enumctr\endcsname}
        {\@nmbrlisttrue\let\@listctr\@enumctr
         \parsep\z@\itemsep\z@\topsep\z@
         \setcounter{\@enumctr}{0}
         \def\makelabel##1{\hss\llap{\rm ##1}}
       }\fi}
\let\bar=\overline
\let\epsilon=\varepsilon
\def\({\big(}
\def\){\big)}
\def\0{\underline{0}}
\def\Std{\mathscr{T}^{std}}
\def\s{\mathfrak s}
\def\t{\mathfrak t}
\def\u{\mathfrak u}
\def\Hom{\text{Hom}}
  \gdef\set#1{\mathinner{\lbrace\,{\mathcode`\|"8000%
                                   \let|\midvert #1}\,\rbrace}}
  \gdef\seT#1{\mathinner{\Big\lbrace\,{\mathcode`\|"8000%
                                   \let|\midverT #1}\,\Big\rbrace}}
\def\midvert{\egroup\mid\bgroup}
\def\midverT{\egroup\,\Big|\,\bgroup}
\def\Set[#1]#2|#3|{\Big\{\ #2\ \Big| \
           \vcenter{\hsize #1mm\centering #3}\Big\}}
\def\qed{\hfill\mbox{$\Box$}}
\def\Hom{{\rm Hom}}
\def\Set{{\rm Set}}
\def\Std{\mathscr{T}^{std}}%
\def\s{\mathfrak s}%
\def\t{\mathfrak t}%
\def\u{\mathfrak u}%
\def\Hom{\text{Hom}}%
\def\textsf#1{{\textit{#1}}}%
\definecolor{white}{HTML}{FFFFFF}
\definecolor{darkblue}{HTML}{111199}
\definecolor{darkgreen}{HTML}{336633}
\definecolor{darkred}{HTML}{993333}
\definecolor{darkpurple}{HTML}{995599}
\begin{document}
\title[{\tiny  The Jucys-Murphy basis and semisimplicty criteria for the   $q$-Brauer algebra}]{ The Jucys-Murphy basis and semisimplicty criteria for the  $q$-Brauer algebra}
\author{ Hebing Rui, Mei Si  and  Linliang Song} \address{H.R.  School of Mathematical Science, Tongji University,  Shanghai, 200092, China}\email{hbrui@tongji.edu.cn}
\address {M.S. School of Mathematics, Shanghai Jiaotong University, Shanghai, 200240, China}\email{simei@sjtu.edu.cn}
\address{L.S.  School of Mathematical Science, Tongji University,  Shanghai, 200092, China}\email{llsong@tongji.edu.cn}
\subjclass[2010]{17B10, 18D10, 33D80}
\keywords{ $q$-Brauer algebras, Jucys-Murphy basis, Semisimiplicity}
\thanks{ H. Rui is supported  partially by NSFC (grant No.  11971351). M. Si and  L. Song are supported  partially by NSFC (grant No.  12071346) and  China Scholarship Council. }
\date{\today}
\sloppy
\maketitle

\begin{abstract} We construct the Jucys-Murphy  elements and the Jucys-Murphy basis for the $q$-Brauer algebra
in the sense of \cite[Definition~2.4]{MA1}. We also give a necessary and sufficient condition for  the $q$-Brauer algebra being (split)  semisimple  over an arbitrary field.
   \end{abstract}

\section{Introduction}
In his remarkable work~\cite{Schu0}, Schur classified the polynomial representations of the complex general
linear group
 by decomposing tensor products of its  natural representation. The  symmetric group
 appears  naturally in Schur's work and plays an important role.  Later on Brauer~\cite{B} studied the related problem for  the complex orthogonal group
 and the complex symplectic group
 and found a new family of complex associative algebras called the Brauer algebras or Brauer centralizer algebras. These algebras play   roles similar to those  of symmetric groups.

There are many ways to generalize results in ~\cite{Schu0, B}. See \cite{Jim, Wen} where the complex classical groups are replaced by
quantum enveloping algebras of classic types.  Hecke algebras and Birman-Murakami-Wenzl algebras~\cite{Bir, Mur} come into the picture.
In ~\cite{Wen1}, Wenzl considered  tensor products of the natural representation of   certain  coideal algebras \cite{Let} and introduced a new  associative algebra, called the  $q$-Brauer algebra    $\mathcal B_n(q,z)$  for any positive integer $n\ge 2$.
See also \cite{Cui} for the explicit Schur-Weyl duality between 
the $\imath$-quantum groups of types AI or AII and the  $q$-Brauer algebras.  
    The  $q$-Brauer algebra is isomorphic to
   an endomorphism algebra of the $q$-Brauer  category~ \cite{SDT} and is isomorphic to
    a  quotient of
the  associative  algebra introduced by Molev~\cite{Mol}.

The  $q$-Brauer algebra has been used to construct  new subfactors of type II$_1$ factors~\cite{Wen2}. It has some application in  module categories of fusion categories of type $A$ corresponding to certain symmetric spaces\cite{Wen10}.
 In \cite[Theorem~3.8]{Wen1}, Wenzl proved  that $\mathcal B_n(q, z)$ is free over a commutative ring with rank $(2n-1)!!$.
When the ground ring is a field,
Wenzl also proved that
   $\mathcal B_n(q,z)$ is semisimple  if $z^2\neq q^{2k}$ for $k\in \mathbb Z$, $|k|\le n$ and $q^2$ is not an $\ell$-th root of unity, $\ell\le n$~\cite[Theorem~5.3]{Wen1}. Under the assumption $z^2\neq 1$,
   Dung Tien Nguyen proved that  $\mathcal B_n(q,z)$ is a cellular algebra in the sense of \cite[Definition~1.1]{GL} and  classified simple $\mathcal B_n(q,z)$-modules over an arbitrary field~\cite[Theorem~3.2, Theorem~4.1]{N21}.
He also  gave an explicit condition for $\mathcal B_3(q, z)$ being  semisimple over an arbitrary field~\cite[Proposition~5.1]{N21}.  Dung Tien Nguyen's conditions on
 parameters \cite[Example~5.7]{N21} are different  from  those  for the corresponding  Birman-Murakami-Weznl algebra in \cite[Theorem~5.9]{RS1}.
 Therefore the   $q$-Brauer algebra  is another $q$-deformation of the Brauer algebra.

 We expect to generalize our previous results in ~\cite{RS1, RS2} to the $q$-Brauer algebra. More explicitly, we want to construct Jucys-Murphy basis and orthogonal basis of the $q$-Brauer algebra so as to compute Gram determinants, determine explicitly the semisimplicity criteria,  classify blocks and compute decomposition matrices (when $q$ is not a root of unity) etc. The first difficult problem that we faced is to find out  a family of commutative elements in $\mathcal B_n(q,z)$ called the Jucys-Murphy elements or  JM elements in the sense of \cite[Definition~2.4]{MA1}. By studying classical branching rule for the $q$-Brauer algebra in details, we are able to  define the Jucys-Murphy basis of the  $q$-Brauer algebra.
 Since the summation of Jucys-Murphy elements is not a central element,
  we have to make extra efforts
to prove that Jucys-Murphy elements act on Jucys-Murphy basis as  upper-triangular matrices. As a by-product one can use arguments in \cite{MA1} to define an orthogonal basis of $\mathcal B_n(q,z)$ in generic case. In this paper,  we do not give details  since we have not found out an efficient way to  compute
Gram determinants associated to  cell modules of $\mathcal B_n(q, z)$, and
  we could not use the  method in \cite{RS1} to give  a  semisimplicity criteria  for
$\mathcal B_n(q, z)$  over an arbitrary field. We describe explicitly the radicals of certain cell modules in section~six and finally are able to
  prove the following result:

\begin{THEOREM}\label{main3} For any $n\ge 2$, let $\mathcal B_n( q, z)$ be defined over a field $F$ which  contains invertible elements $z$, $z-z^{-1}$, $q$ and $q-q^{-1}$. Let $e$ be the quantum characteristic of $q^2$. Then
 $\mathcal B_n(q, z)$ is (split) semisimple if and only if
$e>n$ and $z^2\neq q^{2a}$ where
\begin{equation}\label{paracon} a\in \{i\in \mathbb Z\mid 4-2n\le i\le n-2\}\setminus \{i\in \mathbb Z\mid
4-2n<i\le 3-n, 2\nmid i\}.\end{equation}
\end{THEOREM}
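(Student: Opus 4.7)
The plan is to combine Nguyen's cellularity theorem for $\mathcal B_n(q,z)$ \cite[Theorem~3.2]{N21} with the explicit description of radicals of certain cell modules developed in Section~6 and the Jucys--Murphy basis constructed earlier in the paper. Since $\mathcal B_n(q,z)$ is a cellular algebra over a field, it is semisimple if and only if every cell module $C(\lambda)$ is simple, equivalently if and only if the cellular bilinear form on each $C(\lambda)$ is non-degenerate. The cell modules are indexed by partitions $\lambda$ with $|\lambda|=n-2f$, $0\le f\le \lfloor n/2\rfloor$, so the task splits into checking simplicity level by level.

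For the necessity direction I would treat the two failure modes separately. If $e\le n$, the Hecke algebra $\mathcal H_n(q^2)$ is not semisimple; since $\mathcal H_n(q^2)$ is the quotient of $\mathcal B_n(q,z)$ by the two-sided ideal generated by diagrams containing at least one horizontal strand, $\mathcal B_n(q,z)$ inherits non-semisimplicity. If instead $z^2=q^{2a}$ for some $a$ in the set \eqref{paracon}, I would invoke the results of Section~6 to exhibit a nonzero vector in the radical of some cell module, thereby directly witnessing non-semisimplicity.

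For the sufficiency direction I would argue by induction on $n$, with $n=2,3$ following from Nguyen's explicit analysis \cite[Proposition~5.1]{N21}. Assuming the theorem for $\mathcal B_{n-1}(q,z)$, the classical branching rule controls the restriction of every $C(\lambda)$ to $\mathcal B_{n-1}(q,z)$. The upper-triangular action of the Jucys--Murphy elements on the Jucys--Murphy basis provides joint eigenvectors whose eigenvalues separate the constituents. Under the hypotheses $e>n$ and $z^2\neq q^{2a}$ for the listed $a$, these eigenvalues are pairwise distinct across non-isomorphic summands appearing in the restriction, which forces the Gram form to be non-degenerate and hence $C(\lambda)$ to be simple.

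The principal obstacle is the asymmetric shape of the parameter set \eqref{paracon}: the range $4-2n\le i\le n-2$ has the odd integers in $(4-2n,3-n]$ removed. This reflects a subtle combinatorial fact, namely that a nonzero cellular homomorphism $C(\mu)\to C(\lambda)$ with $|\mu|=|\lambda|-2$ exists precisely when a certain residue difference takes one of the allowed values of $a$, while the odd exceptions collapse because the relevant pair of addable and removable boxes fails a parity condition imposed by the $q$-Brauer relations. Aligning this combinatorial classification with the explicit radical calculations of Section~6 uniformly in $n$ is the heart of the argument.
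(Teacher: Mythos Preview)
Your necessity direction is close to the paper's, but the sufficiency argument has a genuine gap. The inference ``Jucys--Murphy eigenvalues are pairwise distinct across the branching summands, hence the Gram form on $C(f,\lambda)$ is non-degenerate'' is not valid. In Mathas's framework the separation condition yields an orthogonal (seminormal) basis and a factorisation $\det G_{f,\lambda}=\prod_{\mathbf t}\gamma_{\mathbf t}$, but semisimplicity is equivalent to all $\gamma_{\mathbf t}\neq 0$, which is an additional computation you never carry out. The paper in fact states explicitly in the introduction that it could \emph{not} compute these Gram determinants efficiently for $\mathcal B_n(q,z)$ and therefore could not imitate the argument of \cite{RS1}; your proposal is precisely the route the authors tried and abandoned. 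Even the weaker version you sketch (any $\mathcal B_n$-submodule is $L_n$-stable, hence a sum of branching factors) does not finish: you must still show that no proper nonempty union of branching factors is closed under $T_{n-1}$ and $E_{n-1,n}$, and nothing in your outline addresses this.

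What the paper actually does is structurally different. Proposition~\ref{semisim} reduces semisimplicity of $\mathcal B_n$ (for $e>n$) to the single layer $f=1$: one must only check $\det G_{1,\lambda}\neq 0$ for all $\lambda\in\Lambda^+(k-2)$, $2\le k\le n$. The reduction goes by induction on $n$ using the exact functors $\mathcal F$ and $\mathcal G$ of Section~5 (Proposition~\ref{fg1}), which send $C(f,\lambda)\leftrightarrow C(f\pm 1,\lambda)$ and transport nonzero homomorphisms between cell modules up and down in $f$; this kills all cases $f\ge 2$. The layer $f=1$ is then handled by Theorem~\ref{semi1}, which shows $[C(1,\mu):C(0,\lambda)]\neq 0$ exactly when $\lambda$ is $(1,\mu)$-admissible, i.e.\ $\lambda\setminus\mu=\{p_1,p_2\}$ not in the same column and $z^2=q^{2(1-c(p_1)-c(p_2))}$. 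The proof of Theorem~\ref{semi1} does not use JM separation at all: it analyses an explicit $(\mathcal H_{3,n},\mathcal H_n)$-invariant bilinear form on $E_1\mathcal B_n/\mathcal B_n^2$ and compares with the classical Brauer limit via Doran--Wales--Hanlon. Finally, the asymmetric set \eqref{paracon} is obtained not from any parity feature of the $q$-Brauer relations, but by quoting the combinatorial enumeration \cite[Lemma~2.4]{Ruisi} of the values $1-c(p_1)-c(p_2)$ over all admissible pairs; the odd exclusions in $(4-2n,3-n]$ come from that lemma. Your proposal mentions neither the $\mathcal F,\mathcal G$ reduction nor the parity lift via \cite{Rui} needed when $|\lambda|\not\equiv n\pmod 2$, both of which are essential.
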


Here is a brief outline of the content of this paper.  In the second section,  we recall
some basic results on the $q$-Brauer algebra. In section 3, we study the classical branching rule. In section 4,  we  construct the Jucys-Murphy basis and the Jucys-Murphy  elements of the $q$-Brauer algebra. Restriction and induction functors are given in section 5. We describe
explicitly the radical of certain cell modules in section 6. This enables us
to prove Theorem~\ref{main3} in section 7.

\section{The $q$-Brauer algebra}
In this section, we recall the definitions of the Hecke algebra and the $q$-Brauer algebra and give some basic results on them.

\subsection{The Hecke algebra associated to the symmetric group } The symmetric group $\mathfrak S_n$  is  the Coxeter group with $\{s_i\mid 1\le i\le n-1\}$ as its distinguished generators subject to the relations:
\begin{equation} \label{sysg} s_i^2=1,\ \  s_is_j=s_js_i,\ \ s_ks_{k+1}s_k=s_{k+1}s_ks_{k+1}\end{equation}
where  $1<|i-j|$ and $1\le k\le n-2$.
In this paper we assume $\mathcal Z=\mathbb Z[q, q^{-1}]$, the ring of Laurent polynomials  in indeterminate $q$ with coefficients in $\mathbb Z$. The Hecke algebra $\mathcal H_n$ associated to $\mathfrak S_n$
  is the unital associative $\mathcal Z$-algebra generated by $\{ T_i\mid  1\leq i\leq n-1\}$ subject to  the relations
\begin{equation} \label{kkk1} (T_i-q)(T_i+q^{-1})=0,\ \  T_iT_j=T_jT_i,\ \ T_kT_{k+1}T_k=T_{k+1}T_kT_{k+1}\end{equation}
where   $1<|i-j|$ and $1\le k\le n-2$.

  Suppose $w\in \mathfrak S_n$.  Then  $s_{i_1}s_{i_2}\cdots s_{i_k}$ is a reduced expression of  $w$  if $w=s_{i_1}s_{i_2}\cdots s_{i_k}$ and $k$ is minimal. In
   this case, $k$ is the length of $w$ and is denoted by $\ell(w)$.
   Let $T_w=T_{i_1}T_{i_2}\cdots T_{i_k}$ if $s_{i_1}s_{i_2}\cdots s_{i_k}$ is a reduced expression of $w$. It is known that $T_w$ is independent of any reduced expression of $w$ and $\{T_w\mid w\in \mathfrak S_n\}$ is a $\mathcal Z$-basis of $\mathcal H_n$. Let $\tau : \mathcal H_n\rightarrow \mathcal Z$ such that $\tau (\sum_w a_w T_w)=a_1$ where $1$ is the identity element of $\mathfrak S_n$.
Then $\tau$ is a trace function on $\mathcal H_n$ such that   \begin{equation}\label{tau} \tau(T_xT_y)=\delta_{x, y^{-1}} \end{equation} for all $x, y\in \mathfrak S_n $~\cite[Proposition~1.16]{Ma}.

\subsection{The $q$-Brauer algebra}
Let $\Bbbk=\mathbb Z[q,q^{-1}, z, z^{-1}, (q-q^{-1})^{-1}, (z-z^{-1})^{-1}]
$, where  $z$ is another  indeterminate. In this paper we always assume \begin{equation}\label{delta}\delta=\frac{z-z^{-1}}{q-q^{-1}}.\end{equation}
Then $\delta$ is invertible  in $\Bbbk$.
The original $q$-Brauer algebra is available over $\mathbb Z[q,q^{-1}, z, z^{-1}, (q-q^{-1})^{-1}]$. In this case,
 $\delta$ may  not be invertible.  In \cite{N18}, Dung Tien Nguyen proved that $q$-Brauer algebra is a cellular algebra in the sense of \cite[Definition~1.1]{GL} under the assumption that $\delta$ is invertible.
  Since our results depend on the cellular structure of the $q$-Brauer algebra, we need this additional assumption.
\begin{Defn}\label{qqq} \cite[Definition~3.1]{Wen1} \cite[Definition~3.1]{N21} Suppose $n\ge 2$. The $q$-Brauer algebra $\mathcal B_n(q, z)$ or just $\mathcal B_n$  is a unital associative $\Bbbk$-algebra generated by $T_1, T_2, \cdots, T_{n-1}$ and $E_1$ subject to the relations in \eqref{kkk1} together with
\begin{multicols}{2}
\item[(1)] $E_1^2 =\delta E_1$,
\item[(2)] $T_1 E_1=qE_1=E_1T_1$,
\item[(3)] $E_1T_2E_1 =z E_1$,
\item[(4)] $T_i E_1=E_1T_i$ if $i>2$,
\item[(5)] $T_2T_3T_1^{-1}T_2^{-1} E_{(2)}=E_{(2)}=E_{(2)} T_2T_3T_1^{-1}T_2^{-1}$,
\end{multicols}
where $E_{(2)}=E_1 T_2T_3T_1^{-1}T_2^{-1} E_1$.
\end{Defn}

\begin{rems}\label{limit} Suppose $z=q^a$ for some $a\in \mathbb Z\setminus\{0\}$. It follows from
\cite[Remark~3.1]{Wen1} that the classical limit of
$\mathcal B_n(q, q^a)$ is the Brauer algebra $B_n(a)$ over $\mathbb Z$. In this case, $\lim_{q\rightarrow 1} \delta=a$ and $T_i$ becomes the simple reflection $s_i$ and $E_1$ can be identified with the corresponding element in $B_n(a)$. \end{rems}
We write $\mathcal B_0=\mathcal B_1=\Bbbk$.
Thanks to  \cite[Theorem~3.8]{Wen1},  																																																																																																																																																																	 $\mathcal B_n$ is free over $\Bbbk$ with rank $(2n-1)!!$ and   the subalgebra of  $\mathcal B_n$ generated by $T_1, T_2, \ldots, T_{n-1}$ is isomorphic to the Hecke algebra $\mathcal H_n$. For this reason, one can  use symbols $T_i$'s in both $\mathcal H_n$ and   $\mathcal B_n$. Moreover, \begin{equation}\label{quot1}  \mathcal H_n\cong \mathcal B_{n}/\langle E_1\rangle\end{equation} where $\langle E_1\rangle$ is the two-sided ideal of $\mathcal B_n$ generated by $E_1$.
Let $  F$ be a field containing units  $z,q,q-q^{-1}, z-z^{-1}$. Throughout, $$\mathcal B_{n,  F}:= \mathcal B_n\otimes _{\Bbbk} F ,$$
 where $F$ can be considered as the left $\Bbbk$-module on which $z, q\in \Bbbk$ act  via the corresponding $z$ and $q$ in $F$.
 If there is no confusion, we also denote $\mathcal B_{n,  F}$ by $\mathcal B_{n}$.

\begin{Lemma}\label{anti}\cite[Proposition~3.14]{N18} There is a $\Bbbk$-linear anti-involution $\sigma: \mathcal B_n\rightarrow\mathcal B_n$ such that $\sigma(x)=x$ for any $x\in \{E_1, T_1, T_2, \ldots, T_{n-1}\}$.    \end{Lemma}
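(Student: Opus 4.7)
The strategy is to define $\sigma$ on the free $\Bbbk$-algebra $\mathcal{F}$ generated by the symbols $T_1, \ldots, T_{n-1}, T_1^{-1}, \ldots, T_{n-1}^{-1}, E_1$ by declaring each generator to be a fixed point and extending $\Bbbk$-linearly with $\sigma(xy) = \sigma(y)\sigma(x)$. Since $\sigma(T_i)\sigma(T_i^{-1}) = \sigma(T_i^{-1}T_i) = 1$, this is compatible with the inverse relations. The main task is then to show that the two-sided ideal $I \subset \mathcal{F}$ generated by the defining relations of Definition~\ref{qqq} together with \eqref{kkk1} is stable under $\sigma$; once that is done, $\sigma$ descends to $\mathcal{B}_n$, and $\sigma^2 = \id$ on $\mathcal{B}_n$ follows automatically because it holds on a generating set.

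Next I would go through the relations one by one. The Hecke quadratic relations $(T_i-q)(T_i+q^{-1})=0$, the far commutation relations $T_iT_j = T_jT_i$ for $|i-j|>1$, the braid relations $T_kT_{k+1}T_k = T_{k+1}T_kT_{k+1}$, as well as (1) $E_1^2 = \delta E_1$ and (3) $E_1T_2E_1 = zE_1$, are palindromic in the generators and hence are sent to themselves by $\sigma$. The two halves of (2) exchange under $\sigma$, as do the two directions of (4) for each fixed $i>2$, so each such pair maps into the relation ideal.

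The genuinely delicate step is relation (5), which involves $E_{(2)} = E_1 T_2T_3T_1^{-1}T_2^{-1}E_1$. A direct computation gives $\sigma(E_{(2)}) = E_1 T_2^{-1}T_1^{-1}T_3T_2E_1$, and applying $\sigma$ to (5) transforms it into $\sigma(E_{(2)})T_2^{-1}T_1^{-1}T_3T_2 = \sigma(E_{(2)}) = T_2^{-1}T_1^{-1}T_3T_2 \sigma(E_{(2)})$. The key intermediate identity to prove is $\sigma(E_{(2)}) = E_{(2)}$; granting this, the image of (5) becomes the statement obtained from (5) by replacing the Hecke word $T_2T_3T_1^{-1}T_2^{-1}$ by the formally reversed word $T_2^{-1}T_1^{-1}T_3T_2$, which I would deduce by multiplying (5) on appropriate sides by products of the $T_i^{\pm 1}$ and using the braid and commutation relations. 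To establish $\sigma(E_{(2)}) = E_{(2)}$ itself, after using $T_3T_1^{-1} = T_1^{-1}T_3$ to rewrite both sides, the required equality is $E_1 T_2T_1^{-1}T_3T_2^{-1}E_1 = E_1 T_2^{-1}T_1^{-1}T_3T_2E_1$, which can be verified by a calculation inside $\mathcal{H}_n$ together with the $E_1$-absorption identities $E_1T_1^{\pm 1} = q^{\pm 1}E_1$ and the derived identity $E_1T_2^{-1}E_1 = z^{-1}E_1$ (obtained from (3) and the Hecke quadratic relation for $T_2$).

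I expect the verification $\sigma(E_{(2)}) = E_{(2)}$ to be the main obstacle: it is the only relation whose $\sigma$-image is not manifestly of the same form, and it requires a genuine calculation mixing braid manipulations with the absorption rules for $E_1$. A conceptually cleaner route, if available, is to exploit the identification of $\mathcal{B}_n$ with an endomorphism algebra in the $q$-Brauer category \cite{SDT}: the vertical flip of diagrams is an anti-involution of the category fixing the generating morphisms corresponding to $T_i$ and $E_1$, and this immediately realizes $\sigma$ geometrically, bypassing the algebraic gymnastics around $E_{(2)}$.
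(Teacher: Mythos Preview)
The paper does not give a proof of this lemma at all: it is quoted verbatim from \cite[Proposition~3.14]{N18}, so there is no ``paper's own proof'' to compare against. Your strategy---defining $\sigma$ on a free algebra by fixing the generators and reversing products, then checking that each defining relator lies in the $\sigma$-image of the relation ideal---is the standard one and is correct in outline. You also correctly identify relation~(5) as the only nontrivial point.

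Two remarks on the delicate step. First, your claim that $\sigma(E_{(2)})=E_{(2)}$ is exactly Lemma~\ref{k2}(3) of the present paper (also cited from \cite{Wen1,N18}), so the paper tacitly confirms that this identity is available and is the heart of the matter. Second, even after establishing $\sigma(E_{(2)})=E_{(2)}$, you still need $E_{(2)}T_2^{-1}T_1^{-1}T_3T_2=E_{(2)}$ and its left-handed analogue; your plan to obtain these by manipulating~(5) is correct but not entirely trivial, since $T_2T_3T_1^{-1}T_2^{-1}$ and $T_2^{-1}T_1^{-1}T_3T_2$ are \emph{not} equal in $\mathcal H_n$. One clean way through is to first prove $T_1E_{(2)}=qE_{(2)}=E_{(2)}T_1$ (immediate from~(2)) and $T_3E_{(2)}=qE_{(2)}=E_{(2)}T_3$ (this is Lemma~\ref{k2}(6) for $k=2$, provable directly from~(4), the braid relation, and~(5)); with these absorption rules both the identity $\sigma(E_{(2)})=E_{(2)}$ and the reversed form of~(5) reduce to short Hecke calculations. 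Your alternative diagrammatic route via \cite{SDT} is also legitimate and indeed bypasses all of this.
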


\begin{Defn}\label{k2222}  Suppose $0\le k\le \lfloor \frac n 2\rfloor -1$ and $1\le \ell \le n-1$. Define \begin{itemize}\item [(1)] $E^0=1$  and  $E^{k+1}
=E_1 T_{2, 2k+2} T_{2k+1, 1}^{-1} E^k$,
where $T_{i,j}=T_{s_{i,j}}$ and $$s_{i,j}=\begin{cases} s_is_{i+1, j} &\text{if $i<j$,}\\  1 &\text{if $i=j$,}\\ s_{i, j+1}s_j &\text{if $i>j$.}\\
\end{cases}$$
\item [(2)] $E_\ell=T_{\ell,1} T_{2,\ell+1}^{-1} E_1 T_{2, \ell+1} T_{\ell, 1}^{-1}$.
\end{itemize}
\end{Defn}
Definition~\ref{k2222}(1)  was given in   \cite[(3.2)]{Wen1} where  $E^k$ was denoted by $E_{(k)}$.

\begin{Lemma}\cite[(3.2), Lemma~3.3]{Wen1}, \cite[Lemma~3.4, Proposition~3.14]{N18}\label{k2}  Suppose  $1\le k\le \lfloor \frac n 2\rfloor $ and  $1\le \ell\le \lfloor \frac n 2\rfloor -1$.
\begin{multicols}{2}
\item[(1)] $E^{\ell+1}=E^{\ell} T_{1, 2\ell+1}^{-1}  T_{2\ell+2, 2} E_1$,
\item[(2)] $\sigma(E^{k})=E^{k}$,
\item[(3)] $E_1T_2^{-1}T_1^{-1} T_3T_2E_1=E^2=E_1T_2T_1T_3^{-1}T_2^{-1} E_1$,
\item[(4)] $E^{k} T_{2l+1, 1} =E^{k} T_{2, 2l+2}$ if $l<k$,
\item[(5)] $E^{k} T_{1, 2l+1}^{-1} =E^{k} T_{2l+2, 2}^{-1}$ if $l<k$,
\item [(6)] $E^k T_{2k-1}=qE^{k}$.
\end{multicols}
\end{Lemma}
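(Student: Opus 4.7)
My plan is an interleaved induction on $k$ (or $\ell$) based on the recursive definition of $E^k$ in Definition~\ref{k2222}(1), the defining relations of $\mathcal B_n$ in Definition~\ref{qqq}, and the anti-involution $\sigma$ from Lemma~\ref{anti}. The order will be: first prove (3); then prove (1) and (2) simultaneously; then deduce (4), (5), (6).

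For (3), the middle equality $E^2 = E_1 T_2 T_3 T_1^{-1} T_2^{-1} E_1$ is precisely the $k=1$ case of Definition~\ref{k2222}(1). The outer equalities follow by direct manipulation: combine the far-commutation $T_1 T_3 = T_3 T_1$, the quadratic relation $(T_i-q)(T_i+q^{-1})=0$, the absorption identities $T_1 E_1 = qE_1 = E_1 T_1$, and the defining relation $T_2 T_3 T_1^{-1} T_2^{-1} E^2 = E^2 = E^2 T_2 T_3 T_1^{-1} T_2^{-1}$ from Definition~\ref{qqq}(5). Each rewriting is a short calculation that shuffles the inner $T$-word into the target form and absorbs any extra $T_1^{\pm 1}$ into one of the flanking $E_1$'s.

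Next I would prove (1) and (2) together by induction on $\ell$. The base case $\ell=1$ of (1) is the first equality in (3), and (2) for $k=1$ is trivial since $E^1 = E_1$. For the inductive step, apply $\sigma$ to the definition $E^{\ell+1} = E_1 T_{2,2\ell+2} T_{2\ell+1,1}^{-1} E^{\ell}$: using $\sigma(T_i)=T_i$, $\sigma(E_1)=E_1$, and $\sigma(E^{\ell}) = E^{\ell}$ (the inductive hypothesis for (2)), one obtains $\sigma(E^{\ell+1}) = E^{\ell} T_{1,2\ell+1}^{-1} T_{2\ell+2,2} E_1$. Hence (1) at level $\ell+1$ is equivalent to $\sigma(E^{\ell+1}) = E^{\ell+1}$, i.e.\ to (2) at level $\ell+1$. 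I would then establish (1) by unfolding both sides, commuting the inner $T$-block through $E^{\ell}$ using the inductive version of (1) together with the sliding rules (4) and (5) at lower levels, and ultimately reducing the claim to repeated applications of the seed relation (5) of Definition~\ref{qqq}.

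Finally, for (4), the identity $E^k T_{2l+1,1} = E^k T_{2,2l+2}$ with $l<k$ is proved by induction on $k$: expand $E^k$ via Definition~\ref{k2222}(1), apply the inductive hypothesis on $E^{k-1}$, and rearrange with braid relations and $T_1 E_1 = qE_1$. Identity (5) is obtained from (4) by applying $\sigma$ and invoking (2). For (6), observe that $E^{k-1}$ involves only $T_j$ with $j \le 2k-3$ together with $E_1$, all of which commute with $T_{2k-1}$ by Definition~\ref{qqq}(4) and the braid relations; so one can pull $T_{2k-1}$ leftward past $E^{k-1}$, simplify $T_{2,2k} T_{2k-1,1}^{-1} T_{2k-1}$ via braid relations until a residual $T_1$ factor remains at the extreme left, and absorb it via $E_1 T_1 = qE_1$ to produce the scalar $q$. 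The principal obstacle lies in the second step: $\sigma$-invariance of $E^k$ is not manifest from the asymmetric recursion and must be extracted from relation (5) of Definition~\ref{qqq} propagated inductively through the sliding rules. A precise formulation of the joint inductive hypothesis spanning (1), (2), (4), (5), together with careful bookkeeping of braid moves through long products of $T_i$'s, is the crux of the argument.
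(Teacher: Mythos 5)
First, a point of reference: the paper offers no proof of Lemma~\ref{k2} at all --- it is quoted from \cite[(3.2), Lemma~3.3]{Wen1} and \cite[Lemma~3.4, Proposition~3.14]{N18} --- so there is no in-paper argument to measure yours against; your outline does follow the general route of those sources (the middle equality of (3) is indeed the $k=1$ case of Definition~\ref{k2222}(1), and the observation that, given $\sigma(E^{\ell})=E^{\ell}$, part (1) at level $\ell+1$ is equivalent to $\sigma(E^{\ell+1})=E^{\ell+1}$ is correct and is the right organizing idea).

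There are, however, two genuine gaps. The concrete one: (5) does \emph{not} follow from (4) by applying $\sigma$ and invoking (2). Since $\sigma(T_{2l+1,1})=T_{1,2l+1}$ and $\sigma(T_{2,2l+2})=T_{2l+2,2}$, applying $\sigma$ to $E^{k}T_{2l+1,1}=E^{k}T_{2,2l+2}$ and using $\sigma(E^{k})=E^{k}$ yields $T_{1,2l+1}E^{k}=T_{2l+2,2}E^{k}$ --- a statement about \emph{left} multiplication with no inverses --- whereas (5) asserts $E^{k}T_{1,2l+1}^{-1}=E^{k}T_{2l+2,2}^{-1}$. What (5) is actually $\sigma$-equivalent to is the left-handed analogue of (4), namely $T_{2,2l+2}T_{2l+1,1}^{-1}E^{k}=E^{k}$, which your plan never establishes; it needs its own induction propagated from the two-sided relation in Definition~\ref{qqq}(5). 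The second gap is that the step you yourself call the crux --- the inductive verification of (1), equivalently (2), at level $\ell+1$ --- is described only as ``unfolding both sides'' and ``reducing to the seed relation.'' Since that step is supposed to invoke (4) and (5) at lower levels, while your derivation of (5) in turn invokes (2), the joint induction has a nontrivial dependency order that is never pinned down, and with the (4)$\Rightarrow$(5) link broken as noted above, one cannot certify that the induction closes. As written, the proposal is a plausible plan consistent with the cited proofs rather than a proof.
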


\begin{Lemma}\label{k3}For all admissible $i$ and $j$, we have:
\begin{multicols}{2}
\item[(1)] $E_{i+1}=T_i T_{i+1}^{-1} E_{i} T_{i+1} T_i^{-1}$,
\item[(2)] $T_i E_{j}=E_{j}T_i$  if $|i-j|\geq2$,
\item[(3)] $E_{i}E_j=E_jE_i $  if $|i-j|\geq2$,
\item[(4)] $E^{j}  =E_{1}E_3\cdots E_{2j-1}$,
\item[(5)] $E_{i} T_{i+1}^{\pm 1} E_i  =z^\pm E_{i} $,
\item[(6)] $E_i^2=\delta E_i$,
\item [(7)]  $ E_1T_{2i,2}^{-1}T_{1,2i-1}E^{i-1}=E^{i}$~\cite[Lemma~3.3]{Cui},
\item [(8)] $E_{2i-1}T_{ 2i-1}^{-1}T_{ 2i}^{-1} E^{i}=z^{-1}q^{-1}E^i$.
\end{multicols}
\end{Lemma}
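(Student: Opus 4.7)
The plan is to prove the eight identities in the order (1), (2), (3), (5), (6), (4), (7), (8), since each later item uses earlier ones. We rely throughout on the Hecke relations in~\eqref{kkk1}, the closed form $E_i=T_{i,1}T_{2,i+1}^{-1}E_1T_{2,i+1}T_{i,1}^{-1}$ from Definition~\ref{k2222}(2), the commutations of Definition~\ref{qqq}, Lemma~\ref{k2}, and standard braid manipulations, in particular $T_{j+1}T_jT_{j+1}^{-1}=T_j^{-1}T_{j+1}T_j$ and its variants.

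For (1), substitute the closed form of $E_i$ into the right-hand side; since $T_i$ commutes with $T_1,\ldots,T_{i-2}$ and $T_{i+1}^{\pm 1}$ commutes with $T_1,\ldots,T_{i-1}$, the extra factor $T_i$ is absorbed into $T_{i,1}$ to form $T_{i+1,1}$ and the factor $T_{i+1}^{-1}$ into $T_{2,i+1}^{-1}$ to form $T_{2,i+2}^{-1}$, with the symmetric rearrangement on the right recovering the defining formula for $E_{i+1}$. For (2), we induct on $j$ using (1). The base $j=1$ is Definition~\ref{qqq}(4); for the inductive step with $|i-(j+1)|\ge 2$, the cases $i\ge j+3$ and $i\le j-2$ are immediate, since $T_i$ then commutes with both $T_j,T_{j+1}$ and, by hypothesis, with $E_j$. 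The delicate case is $i=j-1$: using the closed form of $E_{j+1}$, we establish the braid identities $T_{j-1}T_{j+1,1}=T_{j+1,1}T_j$ (from $T_{j-1}T_jT_{j-1}=T_jT_{j-1}T_j$ and commuting the surviving $T_j$ past $T_{j-2}\cdots T_1$) and $T_jT_{2,j+2}^{-1}=T_{2,j+2}^{-1}T_{j+1}$, after which the created $T_{j+1}$ commutes with $E_1$ since $j+1\ge 3$. Identity (3) then follows by using (2) to commute the $T$-factors in the closed form of $E_i$ past $E_j$ (for $j\ge i+2$), reducing to $E_1E_j=E_jE_1$ for $j\ge 3$; the latter is treated by the same mechanism applied to the closed form of $E_j$, using $T_1E_1=qE_1=E_1T_1$ to handle the single $T_1$ appearing inside $E_j$.

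For (5) and (6) we induct on $i$ using (1). The bases are Definition~\ref{qqq}(1) and~(3), together with $E_1T_2^{-1}E_1=z^{-1}E_1$, which follows from $T_2^{-1}=T_2-(q-q^{-1})$ and $E_1^2=\delta E_1$, giving $E_1T_2^{-1}E_1=zE_1-(q-q^{-1})\delta E_1=z^{-1}E_1$. For the inductive step of (5), expand $E_{i+1}T_{i+2}^{\pm 1}E_{i+1}$ via (1), use $T_i^{-1}T_{i+2}T_i=T_{i+2}$ and $T_{i+1}T_{i+2}T_{i+1}^{-1}=T_{i+2}^{-1}T_{i+1}T_{i+2}$ to expose $E_iT_{i+1}^{\pm 1}E_i=z^{\pm 1}E_i$, and then apply (2) to slide the residual $T_{i+2}^{\pm 1}$ through and cancel. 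The inductive step for (6) is shorter and parallel: $E_{i+1}^2=T_iT_{i+1}^{-1}E_iT_{i+1}T_{i+1}^{-1}E_iT_{i+1}T_i^{-1}=T_iT_{i+1}^{-1}E_i^2T_{i+1}T_i^{-1}=\delta E_{i+1}$.

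Identity (7) is cited from \cite[Lemma~3.3]{Cui}; alternatively, it follows from Lemma~\ref{k2}(1) by applying the anti-involution $\sigma$, using $\sigma(T_w)=T_{w^{-1}}$ together with the $\sigma$-invariance of $E^i$ and $E^{i-1}$ (Lemma~\ref{k2}(2)). For (4), induct on $j$: the base $j=1$ is trivial, and the inductive step combines Definition~\ref{k2222}(1) with (7) and (3) to rewrite $E^{j+1}=E^jE_{2j+1}$. For (8), factor $E^i=E^{i-1}E_{2i-1}$ by (4), commute $T_{2i-1}^{-1}$ and $T_{2i}^{-1}$ past $E^{i-1}$ using (2) (since $|2i-1-(2k-1)|\ge 2$ and $|2i-(2k-1)|\ge 3$ for $k\le i-1$), and then reduce the remaining local expression $E_{2i-1}T_{2i-1}^{-1}T_{2i}^{-1}E_{2i-1}$ by substituting the closed form of $E_{2i-1}=gE_1g^{-1}$ with $g=T_{2i-1,1}T_{2,2i}^{-1}$, manipulating the central factor $g^{-1}T_{2i-1}^{-1}T_{2i}^{-1}g$ by braid moves, and reducing to the base relation $E_1T_1^{-1}T_2^{-1}E_1=q^{-1}z^{-1}E_1$ (which follows from $T_1E_1=qE_1$ and the $T_2^{-1}$-version of~(5) already established). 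The principal obstacle throughout is the delicate case $i=j-1$ of~(2), where the correct braid reductions $T_{j-1}T_{j+1,1}=T_{j+1,1}T_j$ and $T_jT_{2,j+2}^{-1}=T_{2,j+2}^{-1}T_{j+1}$ must be identified; once these are in hand, the remaining identities follow by systematic bookkeeping with the Hecke relations.
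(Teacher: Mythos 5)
Your overall architecture --- establish (1) and (2) by braid manipulations, then run inductions for the remaining items --- is essentially the paper's, and most of your steps check out: your treatment of the delicate case $i=j-1$ in (2) via $T_{j-1}T_{j+1,1}=T_{j+1,1}T_j$ and $T_jT_{2,j+2}^{-1}=T_{2,j+2}^{-1}T_{j+1}$ is correct (the paper instead disposes of all $i\le j-2$ at once by the single conjugation $T_iT_{j,1}T_{2,j+1}^{-1}=T_{j,1}T_{2,j+1}^{-1}T_{i+2}$), and your inductive steps for (5), (6) and your reduction of (8) to the local identity $E_{2i-1}T_{2i-1}^{-1}T_{2i}^{-1}E_{2i-1}$ are sound. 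However, there is a genuine gap in (3), at exactly the point where the one nontrivial defining relation of the $q$-Brauer algebra must enter. You reduce (3) to $E_1E_j=E_jE_1$ for $j\ge 3$ and claim this follows by commuting $T$-factors, ``using $T_1E_1=qE_1=E_1T_1$ to handle the single $T_1$''. But the obstruction is not the $T_1$: the word $T_{j,1}T_{2,j+1}^{-1}=T_{j-1}\cdots T_2T_1T_j^{-1}\cdots T_2^{-1}$ contains $T_2$ and $T_2^{-1}$, and $E_1$ does not commute with $T_2$ (the only available relations are $T_1E_1=qE_1$, $T_kE_1=E_1T_k$ for $k>2$, and $E_1T_2E_1=zE_1$). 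Pushing $E_1$ into $E_3=T_2T_1T_3^{-1}T_2^{-1}E_1T_2T_3T_1^{-1}T_2^{-1}$ produces $E_1T_2T_1T_3^{-1}T_2^{-1}E_1\cdot T_2T_3T_1^{-1}T_2^{-1}=E_{(2)}T_2T_3T_1^{-1}T_2^{-1}$ (using Lemma~\ref{k2}(3)), and absorbing the residual factor is precisely Definition~\ref{qqq}(5); the symmetric computation for $E_3E_1$ uses the other half of that relation. The identity $E_1E_3=E_3E_1$ is not a consequence of the braid relations and Definition~\ref{qqq}(1)--(4) alone, and your argument never invokes Definition~\ref{qqq}(5). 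The paper's proof of (3) rests on the separately verified identity \eqref{e2}, $E_1E_3=E_{(2)}=E_3E_1$, and then inducts exactly as you do for $j\ge 4$. Since your (4) and (8) depend on (3), the gap propagates.

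A secondary point: your proposed alternative derivation of (7) by applying $\sigma$ to Lemma~\ref{k2}(1) does not work as stated --- $\sigma$ applied to Lemma~\ref{k2}(1) returns Definition~\ref{k2222}(1), not (7); converting the resulting prefix into $T_{2i,2}^{-1}T_{1,2i-1}$ in front of $E^{i-1}$ requires Lemma~\ref{k2}(4)(5) in addition. Since you, like the paper, primarily cite \cite[Lemma~3.3]{Cui} for (7), this does not damage the proof, but the ``alternative'' should be dropped or repaired.
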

\begin{proof} Obviously (1) follows immediately from  Definition~\ref{k2222}(2).
If $i\ge j+2$, then (2) follows from    \eqref{kkk1} and Definition~\ref{qqq}(4). If $i\le j-2$, then
$T_i E_j=T_{j,1} T_{2,j+1}^{-1} T_{i+2} E_1 T_{2, j+1} T_{j, 1}^{-1}=E_jT_i$ and (2) follows.
 It is not difficult to verify
\begin{equation} \label{e2} E_1E_3=E_{(2)}=E_3E_1.\end{equation} If $j>3$, then
$E_1E_j=E_1 T_{j-1} T_j^{-1} E_{j-1} T_{j}T_{j-1}^{-1}=E_jE_1$, where the last equality  follows from the induction assumption on $j-1$ and Definition~\ref{qqq}(4).
 Suppose $i>1$. We can assume $j\ge i+2$ without losing of  any generality when we prove (3).  By (2) and induction assumption on $i-1$,
$E_iE_j=T_{i-1}T_i^{-1}  E_{i-1} T_i T_{i-1}^{-1} E_j=E_j E_i$. In any case, we have verified  (3).
 Thanks to \eqref{e2},  $E^2=E_1E_3$. In general, using (3), induction assumption on $j-1$, Lemma~\ref{k2}(5) and $\sigma$ in Lemma~\ref{anti} yields
$$
\begin{aligned} E_1E_3\cdots E_{2j-1}
&=T_{2j-1,1} T_{2,2j}^{-1} E_1 T_{2, 2j}T_{2j-1,1}^{-1} E^{j-1}\\
&=T_{2j-1,1}
T_{2,2j}^{-1} E^{j}\\
&=E^j,\end{aligned}$$ proving (4).
 $E_1 T_2E_1=zE_1$  is  given   in Definition~\ref{qqq}(3).  The required formula for $E_1 T_2^{-1} E_1$ follows from the quadratic relation in \eqref{kkk1} and Definition~\ref{qqq}(3). In general, by  induction assumption on $i-1$,   we have
$$ E_i T_{i+1}^{\pm 1} E_i  =T_{i-1}T_{i}^{-1}E_{i-1} T_{i+1}^{-1} T_{i}^{\pm 1} T_{i+1} E_{i-1} T_iT_{i-1}^{-1}=z^{\pm 1} E_i,$$ proving (5).  Obviously,   (6) follows from Definition~\ref{qqq}(1) and
(7) was given in \cite[Lemma~3.3]{Cui}. Finally,  (8) follows from the following computation:
$$\begin{aligned}
 E_{2i-1}T_{ 2i-1}^{-1}T_{ 2i}^{-1} E^{i}=&T_{2i-1,1}T_{2,2i}^{-1} E_1T_{2i+1,1}^{-1}T_{1, 2i-1}E^i\\
=&
 q^{-1} T_{2i-1,1}T_{2,2i}^{-1} E_1T_{2i+1,2}^{-1}T_{1,2i-1}E^i\\
=& q^{-1} T_{2i-1,1}T_{2,2i}^{-1} E_1T_{2i,2}^{-1}T_{1,2i-1}E^{i-1} T_{2i}^{-1}E_{2i-1}\\
 = &q^{-1} T_{2i-1,1}T_{2,2i}^{-1}E^i T_{2i}^{-1}E_{2i-1}\\ = & z^{-1}q^{-1} T_{2i-1,1}T_{2,2i}^{-1}E^i\\ = &z^{-1}q^{-1}E^i
\end{aligned} $$
where the forth (resp., fifth, resp., sixth) equation follows from (7) (resp., (5), resp., Lemma~\ref{k2}(2)(5)).
\end{proof}


\begin{Prop}\label{ffunc1} Suppose $n\ge 2$ and  $ \tilde E_1=qz^{-1} T_1^{-1} T_2E_1$. Then  $\tilde E_1^2=\tilde E_1$. Moreover,
\begin{itemize}
\item[(1)] There is an algebra homomorphism
$\phi: \mathcal B_{n-2} \rightarrow \tilde E_1 \mathcal B_n \tilde E_1$ sending $1, E_1, T_i$ to $\tilde E_1$,  $\tilde E_1E_3$ and $\tilde E_1 T_{i+2}$, $ 1\le i\le n-3$, respectively.
\item[(2)] $\phi (E^i)=qz^{-1} T_1^{-1} T_2E^{i+1}$ for any $0\le i\le\lfloor \frac{ n}{2}\rfloor-1$.
\end{itemize} \end{Prop}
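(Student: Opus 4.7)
The plan is to establish the idempotency first, then tackle parts (1) and (2) separately, with the bulk of the work being a careful verification of relations in (1).

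For $\tilde E_1^2 = \tilde E_1$, I would compute directly:
\[
\tilde E_1^2 = q^2 z^{-2}\, T_1^{-1} T_2\, E_1\, T_1^{-1} T_2\, E_1.
\]
Pulling $T_1^{-1}$ through $E_1$ via $E_1 T_1^{-1} = q^{-1} E_1$ (Definition~\ref{qqq}(2)) and then applying $E_1 T_2 E_1 = z E_1$ (Definition~\ref{qqq}(3)) collapses this to $\tilde E_1$.

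For part (1), I would verify that the proposed images satisfy each defining relation of $\mathcal B_{n-2}$ listed in Definition~\ref{qqq}. The decisive simplification is that for all $i \ge 2$, the generator $T_{i+2}$ commutes separately with $T_1$, $T_2$, and $E_1$, hence with $\tilde E_1$; so every relation involving only $\phi(T_i) = \tilde E_1 T_{i+2}$ for $i \ge 2$ reduces to the corresponding relation in $\mathcal B_n$ shifted by two. The remaining relations involve $\phi(T_1) = \tilde E_1 T_3$ and $\phi(E_1) = \tilde E_1 E_3$, and the key auxiliary identity powering those cases is
\[
\tilde E_1 T_3 \tilde E_1 = \tilde E_1 T_3,
\]
proved by the same pattern: $T_3$ commutes with $T_1$ and $E_1$, so the left-hand side reduces via $E_1 T_2 E_1 = z E_1$ to $\tilde E_1 T_3$. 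With this in hand, the quadratic relation on $\phi(T_1)$ reduces to the quadratic relation on $T_3$; braid with $\phi(T_2)$ reduces to $T_3 T_4 T_3 = T_4 T_3 T_4$; commutativity with $\phi(T_j)$, $j \ge 3$, is direct. For $(\tilde E_1 E_3)^2 = \delta \tilde E_1 E_3$, I use $E_1 E_3 = E^2 = E_3 E_1$ (Lemma~\ref{k3}(4)) together with the identity $E^2 T_2 E^2 = \delta z E^2$, which follows from $E_1 T_2 E_1 = z E_1$ and $E_3^2 = \delta E_3$. The mixed relation $(\tilde E_1 T_3)(\tilde E_1 E_3) = q(\tilde E_1 E_3)$ follows from $T_1 E_1 = q E_1$ combined with the commutativity of $T_3$ and $E_3$ with $T_1$; the relation $(\tilde E_1 E_3)(\tilde E_1 T_4)(\tilde E_1 E_3) = z(\tilde E_1 E_3)$ reduces to $E_3 T_4 E_3 = z E_3$ from Lemma~\ref{k3}(5); the $E_{(2)}$ relation in $\mathcal B_{n-2}$ reduces to the $E_{(2)}$ relation in $\mathcal B_n$ shifted by two indices.

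For part (2), I would proceed by induction on $i$, with the base cases $i = 0,1$ following directly from the definitions of $\tilde E_1$ and $\phi(E_1)$ together with $E_1 E_3 = E^2$. For the inductive step, first observe by induction on $k$ that $\phi(E_{2k-1}) = \tilde E_1 E_{2k+1}$, using the recursion $E_{j+1} = T_j T_{j+1}^{-1} E_j T_{j+1} T_j^{-1}$ from Lemma~\ref{k3}(1) and the commutation of $T_j$ with $\tilde E_1$ for $j \ge 4$ (the $T_3$ case being handled by the identity above). Then using $E^{i+1} = E^i E_{2i+1}$ from Lemma~\ref{k3}(4), the computation of $\phi(E^{i+1}) = \phi(E^i)\phi(E_{2i+1})$ reduces after cancellation via $E^{i+1} T_1^{-1} = q^{-1} E^{i+1}$ (from Lemma~\ref{k3}(2) and $E_1 T_1^{-1} = q^{-1} E_1$) to the key identity $E^{i+1} T_2 E_1 = z E^{i+1}$. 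This last identity follows by a short induction: for $i \ge 2$ the factor $E_{2i+1}$ commutes with both $T_2$ and $E_1$, while the $i=1$ case is $E_1 E_3 T_2 E_1 = E_3 E_1 T_2 E_1 = z E_3 E_1 = z E^2$.

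The main obstacle is the case analysis in part (1), specifically the relations involving $\phi(T_1)$ and $\phi(E_1)$, because $\tilde E_1$ contains the neighbors $T_1, T_2, E_1$ which do not all commute with $T_3$ or $E_3$. Once the single absorption identity $\tilde E_1 T_3 \tilde E_1 = \tilde E_1 T_3$ is isolated, the remaining checks are routine manipulations with the relations collected in Lemmas~\ref{k2} and~\ref{k3}.
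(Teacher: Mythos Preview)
Your proposal is correct and follows essentially the same strategy as the paper: verify idempotency directly, check the defining relations on the images using the absorption identity $\tilde E_1 X\tilde E_1=\tilde E_1 X$ for $X$ commuting with $E_1$, and finish part~(2) by induction. The only genuine difference is in how you organize the induction for part~(2): the paper uses the recursion $E^{i}=E_1T_{2,2i}T_{2i-1,1}^{-1}E^{i-1}$ from Definition~\ref{k2222}(1) and computes $\phi(E^{i})$ term by term, whereas you use the product decomposition $E^{i}=E_1E_3\cdots E_{2i-1}$ from Lemma~\ref{k3}(4) together with the auxiliary fact $\phi(E_{2k-1})=\tilde E_1E_{2k+1}$. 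Both routes work; yours is arguably more transparent once the auxiliary fact is in hand, while the paper's is more direct but requires tracking longer $T$-words. One small point: your treatment of Definition~\ref{qqq}(5) (``reduces to the shifted relation'') is a bit terse---the paper instead computes $\phi(E^2)=qz^{-1}T_1^{-1}T_2E^3$ explicitly and then invokes $E^3T_4T_5T_3^{-1}T_4^{-1}=E^3$, which is itself the shifted relation you allude to. Either way this step deserves a line of justification, but it is routine.
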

 \begin{proof} It is easy to verify that $\tilde E_1^2=\tilde E_1$,
  $\tilde E_1 E_3 \tilde E_1=\tilde E_1 E_3$ and $\tilde E_1T_i\tilde E_1=\tilde E_1T_i$ if $3\le i\le n-1$. So
  $\phi(E_1),  \phi(T_1), \ldots, \phi(T_{n-3})$ satisfy \eqref{kkk1} and Definition~\ref{qqq}(1)-(4). We remark that the quadratic relation in
  \eqref{kkk1} becomes $(\tilde E_1 T_i-q\tilde E_1) (\tilde E_1 T_i+q^{-1} \tilde E_1)=0$, $3\le i\le n-1$. We have
 $$\begin{aligned} \phi(E^2) & = \tilde  E_1 E_3 T_4T_5T_3^{-1} T_4^{-1}\tilde E_1 E_3
\\
 & = qz^{-1}T_1^{-1}T_2 E_1T_2T_3T_1^{-1}T_2^{-1} E_1 T_4T_5T_3^{-1} T_4^{-1} \tilde E_1E_3\\
 &=qz^{-1}T_1^{-1}T_2 E_1T_2T_3T_4T_5 T_1^{-1}T_2^{-1}T_3^{-1} T_4^{-1}  E^2\\& =qz^{-1}T_1^{-1}T_2 E^3,
 \\
 \end{aligned}  $$
 where the last equality follows from  Definition~\ref{k2222}(1).
 So, $$ \phi( E^2T_2T_3T_1^{-1}T_2^{-1})=qz^{-1} T_1^{-1}T_2 E^3  T_4T_5T_3^{-1}T_4^{-1}  =qz^{-1} T_1^{-1} T_2 E^3=\phi(T_2T_3T_1^{-1}T_2^{-1}E^{2}).$$
  This proves that the images of  generators also satisfy Definition~\ref{qqq}(5) and hence  $\phi$ is an algebra homomorphism.
When  $i=0, 1$, (2) follows from the definition of $\phi$.
  In general, by induction assumption on $i-1$ we have
 $$\begin{aligned}
 	\phi(E^i) & =\phi(E_1 T_{2, 2i} T_{2i-1, 1}^{-1} E^{i-1})                                                        \\
 	          & =qz^{-1} \phi(E_1 T_{2, 2i} T_{2i-1, 1}^{-1})
  T_1^{-1} T_2 E^i                                     \\
 	          & = q^2 z^{-2} T_1^{-1} T_2 E_1T_2T_3T_1^{-1}T_2^{-1}E_1 T_{4, 2i+2} T_{2i+1, 3}^{-1}T_1^{-1} T_2 E^i  \\
 	          & = q^2 z^{-2} T_1^{-1} T_2 E_1T_2T_3T_1^{-1}T_2^{-1} T_{4, 2i+2} T_{2i+1, 3}^{-1}E_1 T_1^{-1} T_2 E^i \\
 	          & =q z^{-1} T_1^{-1} T_2 E_1 T_{2, 2i+2} T_{2i+1, 1}^{-1}  E^i                                         \\
 	          & =qz^{-1} T_1^{-1} T_2 E^{i+1},
 \end{aligned}
 $$
where the last equality follows from  Definition~\ref{k2222}(1). This  proves (2).  \end{proof}

\section{The classical branching rule }
In subsection~3.1, we recall some well-known results on Hecke algebras.  In the remaining part of this section, we study the classical branching rule for the $q$-Brauer algebra.
\subsection{ Cell filtration of cell  modules and  permutation modules for Hecke algebras}
 For any $0\le f\le \lfloor n/2\rfloor$, let ${\mathfrak S}_{2f+1, n}$ be the symmetric group on letters $\{2f+1, 2f+2, \ldots, n\}$. When $n$ is even and $f=n/2$, we set  $
\mathfrak S_{2f+1, n}=1$. Let  $\mathcal H_{2f+1, n}$ be the Hecke algebra associated to $\mathfrak S_{2f+1, n}$. Then $\mathcal H_{n-2f}\cong  \mathcal H_{2f+1, n} $. The corresponding isomorphism  sends $T_i$ to  $T_{2f+i}$, $1\le i\le n-2f-1$.

Recall that a composition  $\lambda$ of a non-nagative integer  $d$ is a sequence $(\lambda_1, \lambda_2, \cdots )$ of non-negative integers such that $|\lambda|:=\sum_{i\ge 1} \lambda_i=d$. If $\lambda_i\ge \lambda_{i+1}$ for all possible $i$, then $\lambda$ is called a partition.  Given a positive integer $e$, a partition $\lambda$ of $d$ is called $e$-restricted if $\lambda_i-\lambda_{i+1}<e$ for all possible $i$. When $e>d$, any partition of $d$ is $e$-restricted. Let $\Lambda(d)$ (resp., $\Lambda^+(d)$, resp.,   $\Lambda^+_e(d)$) be the set of all compositions (resp., partitions, resp.,  $e$-restricted partitions) of $d$, where $e$ is always the quantum characteristic of $q^2$. Then $e$ is the minimal positive integer such that
 $$1+q^2+\cdots+q^{2e-2}=0.$$ If such a positive integer does not exist, then $e=\infty$. In this case,  $q^2$ is not a root of unity.
 It is known that each of $\Lambda(d)$, $\Lambda^+(d)$ and    $\Lambda^+_e(d)$ is a poset with dominance order $\unlhd$ defined on it such that $\mu\unlhd\lambda$ if
 $\sum_{j=1}^i \mu_j\le \sum_{j=1}^i \lambda_j$ for all possible $i$. If $\mu\unlhd \lambda$ and $\mu\neq \lambda$, we write $\mu\lhd \lambda$.

Each composition  $\lambda$ of $n-2f$ corresponds to the Young diagram $[\lambda]$ such that there are $\lambda_i$ boxes in the $i$-th row of $[\lambda]$. A $\lambda$-tableau $\t$ is obtained from $[\lambda]$ by inserting $2f+1, 2f+2, \ldots, n$ into $[\lambda]$ without repetition. In this case, we write $\text{shape}(\t)=\lambda$ and call $\lambda$ the shape of $\t$.
  If the entries in $\t$ increase from left to right along row and down column, $\t$ is called standard. In this case, $\lambda\in \Lambda^+(n-2f)$.
Let $\Std(\lambda)$ be the set of all standard $\lambda$-tableaux. Then  $\t^\lambda\in \Std(\lambda)$, where $\t^\lambda$   is  obtained from $[\lambda]$ by inserting $2f+1, 2f+2, \ldots, n$ successively from left to right along the rows of $[\lambda]$. For any $\lambda\in \Lambda^+(n-2f)$,  let $\mathfrak S_\lambda$ be the Young subgroup with respect to $\lambda$. Then $\mathfrak S_\lambda$ is the subgroup of $\mathfrak S_{2f+1, n}$ which stabilizes the entries in each row of $\t^\lambda$. For example, $\mathfrak S_\lambda$ is the subgroup of $\mathfrak S_{3,9}$ generated by $s_3,s_4,s_6$ and $s_8$  if $n=9, f=1$ and $\lambda=(3,2,2)$. In this case,
$$
\t^{\lambda}= \young(345,67,89).$$
For each  $\t\in \Std(\lambda)$, there is a distinguished right coset representative $d(\t)$ of $\mathfrak S_\lambda$ in ${\mathfrak S}_{2f+1, n}$ such that
   $\t=\t^\lambda d(\t)$. Suppose $\s, \t\in \Std(\lambda)$.  Following \cite{Ma}, define $$x_{\s\t}=T_{d(\s)}^*x_\lambda T_{d(\t)},$$ where  $x_\lambda=\sum_{w\in {\mathfrak S}_\lambda}q^{\ell(w)}T_w$ and $*$ is the anti-involution on $\mathcal H_{2f+1, n}$ fixing all generators $T_j$'s.


\begin{Theorem}\label{mur}\cite[Theorem~3.20]{Ma} The Hecke algebra  $\mathcal  H_{2f+1, n}$ is free over  $\mathcal Z$ with  basis
$$\{ x_{\s\t}\mid  \s,\t\in\Std(\lambda),  \lambda\in\Lambda^+(n-2f)  \}.$$
It is a cellular basis in the sense of \cite[Definition~1.1]{GL}. The required anti-involution is $\ast$ as above. \end{Theorem}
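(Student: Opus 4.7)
The plan is to reduce to the standard Dipper--James--Murphy theorem for $\mathcal H_{n-2f}$ and then verify all the cellularity axioms for the transported basis. First, via the isomorphism $\mathcal H_{n-2f}\cong \mathcal H_{2f+1,n}$ sending $T_i\mapsto T_{2f+i}$ for $1\le i\le n-2f-1$, it suffices to construct and verify the Murphy basis for $\mathcal H_{n-2f}$ on the generating set indexed by $\{1,\ldots,n-2f-1\}$; the labels $2f+1,\ldots,n$ for boxes and entries of tableaux then transport to the $\{1,\ldots,n-2f\}$ labels in the usual statement. Because the anti-involution $*$ fixes each generator, the transport is compatible with $*$ as well.

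Next, I would recall the key identity $T_i x_\lambda = q x_\lambda = x_\lambda T_i$ for every simple reflection $s_i\in \mathfrak S_\lambda$, which follows directly from the quadratic relation in \eqref{kkk1} and the fact that $\mathfrak S_\lambda = \langle s_i : s_i\in\mathfrak S_\lambda\rangle$ with $\ell(s_i w) = \ell(w)+1$ whenever $s_i\in\mathfrak S_\lambda$ and $w$ is a distinguished coset representative. This gives a uniform description: for $\s,\t\in\Std(\lambda)$,
\[
x_{\s\t} = T_{d(\s)}^* x_\lambda T_{d(\t)} = \sum_{w\in\mathfrak S_\lambda} q^{\ell(w)} T_{d(\s)^{-1} w d(\t)}.
\]
Combining with the standard length-additivity $\ell(d(\s)^{-1} w d(\t)) = \ell(d(\s))+\ell(w)+\ell(d(\t))$ for $w\in\mathfrak S_\lambda$, each $x_{\s\t}$ expands in the basis $\{T_w : w\in\mathfrak S_{2f+1,n}\}$ with a distinguished ``leading'' term $T_{d(\s)^{-1}w_\lambda d(\t)}$, where $w_\lambda$ is the longest element of $\mathfrak S_\lambda$. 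A counting argument (the Robinson--Schensted correspondence applied to $\mathfrak S_{n-2f}$) shows
\[
|\mathfrak S_{n-2f}| \;=\; \sum_{\lambda\in\Lambda^+(n-2f)} |\Std(\lambda)|^2,
\]
so once linear independence is established the set has the right cardinality to be a basis of the free $\mathcal Z$-module $\mathcal H_{2f+1,n}$.

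For cellularity in the sense of \cite[Definition~1.1]{GL} I would verify the three axioms with the poset $(\Lambda^+(n-2f),\unlhd)$, the index sets $\Std(\lambda)$, and the anti-involution $\ast$. The compatibility with $*$ is immediate: $x_{\s\t}^* = T_{d(\t)}^* x_\lambda^* T_{d(\s)} = x_{\t\s}$ since $x_\lambda^*=x_\lambda$. The crucial cellular multiplication rule is
\[
h\, x_{\s\t} \;\equiv\; \sum_{\s'\in\Std(\lambda)} r_{\s'}(h,\s)\, x_{\s'\t} \pmod{\mathcal H^{\lhd\lambda}},
\]
where $\mathcal H^{\lhd\lambda}$ is the $\mathcal Z$-span of the $x_{\u\vs}$ with $\text{shape}(\u)=\text{shape}(\vs)\lhd\lambda$. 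This I would prove by Murphy's garnir-style argument: first show that $T_w x_\mu T_{w'}$ for general $w,w'$ can be rewritten modulo lower-dominance terms as a $\mathcal Z$-linear combination of $x_{\s\t}$'s with $\text{shape}=\mu$, by using relations of the form $x_\mu T_i + q^{-1}x_\mu$ to produce $x_\nu$ with $\nu\rhd\mu$ whenever $i$ sits between two rows of $\mu$; the straightening uses the bijection $\t\leftrightarrow d(\t)$ and the fact that non-standard tableaux can be rewritten as linear combinations of standard ones of dominating shapes.

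The main obstacle is the dominance-triangular straightening modulo $\mathcal H^{\lhd\lambda}$: showing that for any two coset representatives $d_1,d_2$ of $\mathfrak S_\mu$, the product $T_{d_1}^* x_\mu T_{d_2}$ can be rewritten as $\sum c_{\s\t} x_{\s\t}$ with $\text{shape}(\s)=\text{shape}(\t)=\mu$ plus a lower-shape correction. This is exactly the content of \cite[Theorem~3.20]{Ma} and is proved by induction on the Bruhat order in $\mathfrak S_\mu\backslash\mathfrak S_{2f+1,n}/\mathfrak S_\mu$, using the Jucys--Murphy/garnir relations; once this is in hand both linear independence (by cardinality and triangularity with the $\{T_w\}$ basis) and the cellularity axioms follow at once. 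Because the theorem is cited verbatim from \cite[Theorem~3.20]{Ma}, no further work is needed beyond the transport-of-structure argument in the first paragraph.
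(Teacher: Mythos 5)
This theorem is quoted verbatim from \cite[Theorem~3.20]{Ma} and the paper offers no proof beyond the citation; your proposal ultimately does the same (transport of structure along $T_i\mapsto T_{2f+i}$ plus an appeal to Murphy's straightening argument as in \cite{Ma}), so the approach matches. One slip worth correcting: you state the cellular multiplication rule modulo $\mathcal H^{\lhd\lambda}$, the span of the $x_{\u\vs}$ of shape \emph{dominated by} $\lambda$, but for the Murphy basis the correction terms lie in shapes that strictly \emph{dominate} $\lambda$ --- the ideal is $\mathcal H^{\rhd\lambda}_{2f+1,n}$ exactly as the paper uses it when defining $x_\t=x_{\t^\lambda\t}+\mathcal H^{\rhd\lambda}_{2f+1,n}$ --- and your own later sentence (``standard ones of dominating shapes'') uses the correct direction, so the two halves of your sketch are inconsistent as written.
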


Suppose  $(\lambda, \mu)\in\Lambda^+(n-2f)\times \Lambda^+(n-2f-1)$. The partition $\mu$ is obtained from $\lambda$ by removing a  removable node, say  $p=(k, \lambda_k)$ of $\lambda$ if $\mu_j=\lambda_j$, $j\neq k$ and $\mu_{k}=\lambda_k-1$. In this case, $\lambda$ is obtained from $\mu$ by adding the addable node $p$ of $\mu$.  We write either $\lambda=\mu\cup p$ or $\mu=\lambda\setminus p$.
Let $\mathcal R_\lambda$ be the set of all
partitions obtained from $\lambda$ by removing a removable node and $\mathcal A_\lambda$ the set of all
partitions obtained from $\lambda$ by adding an addable node.

For any  $\lambda\in \Lambda^+(n-2f)$, let  $S^{\lambda}$ be  the cell module of   $\mathcal H_{2f+1, n}$ with respect to the Jucys-Murphy basis in Theorem~\ref{mur}.  By \cite[Definition~2.1]{GL}    $S^{\lambda}$  can be identified with  the free $\mathcal Z$-module with basis
$\{x_{\t}\mid \t\in \Std(\lambda)\}$ where $$x_\t:=x_{\t^\lambda \t}+\mathcal  H_{2f+1, n}^{\rhd \lambda}$$ and $\mathcal  H_{2f+1, n}^{\rhd \lambda}$ is the free $\mathcal Z$-submodule  of $\mathcal H_{2f+1, n}$ spanned by $\{x_{\u \s} \mid \u, \s \in \Std(\mu), \mu\rhd \lambda\}$. It is also a two-sided ideal of $\mathcal H_{2f+1, n}$. Suppose $n>2f$.
Write   $\mathcal R_\lambda=\{\mu^{(i)}\mid 1\le i\le a\}$ for some positive integer $a$ such that
 $$\mu^{(1)}\rhd \mu^{(2)}\rhd \cdots\rhd \mu^{(a)}.$$ For any standard $\lambda$-tableau $\t$, let
$\t\!\!\!\downarrow_{n-1}$ be obtained from  $\t$ by removing the entry $n$. Then $\t\!\!\!\downarrow_{n-1}\in\Std(\mu)$ for some  $\mu\in \mathcal R_\lambda$. Let $S^\lambda_i= \mathcal Z\text{--span}\{x_\t\mid  \t\!\!\!\downarrow_{n-1}\in \Std(\mu^{(j)}), 1\le j\le i\}$. Then $S^\lambda_i$ is a right $ \mathcal H_{2f+1, n-1}$-module such that $$S^{\lambda}=S^\lambda_a\supset S^\lambda_{a-1}\supset \cdots \supset S^\lambda_1\supset  S^\lambda_0=0.$$

 \begin{Theorem}\label{hecbr}\cite[Proposition~6.1]{Ma} As $\mathcal  H_{2f+1, n-1}$-modules, $ S^\lambda_i/S^\lambda_{i-1}\cong S^{\mu^{(i)}}, 1\leq i\leq a$,
where $ S^{\mu^{(i)}}$ is the cell module  with respect to the Jucys-Murphy basis of
 $\mathcal  H_{2f+1, n-1}$ in Theorem~\ref{mur}. \end{Theorem}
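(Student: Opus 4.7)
The plan is to prove the two assertions---that each $S^\lambda_i$ is an $\mathcal H_{2f+1,n-1}$-submodule, and that $S^\lambda_i/S^\lambda_{i-1} \cong S^{\mu^{(i)}}$---by a direct analysis of how the generators $T_j$ with $2f+1 \le j \le n-2$ act on the standard basis $\{x_\t : \t \in \Std(\lambda)\}$ of $S^\lambda$. Working modulo the ideal $\mathcal H_{2f+1,n}^{\rhd\lambda}$, the product $x_\t T_j$ admits an expansion $x_\t T_j \equiv \sum_{\t'} c_{\t,\t'}^{(j)}\, x_{\t'}$, where the coefficients are governed by the classical Murphy--Garnir relations for the Hecke algebra. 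The key observation is that every $\t'$ with $c_{\t,\t'}^{(j)}\neq 0$ is obtained from $\t$ by either fixing it or swapping the entries $j$ and $j{+}1$; in both cases, since $j \le n-2$, the entry $n$ is untouched, so $\t'\!\!\!\downarrow_{n-1}$ and $\t\!\!\!\downarrow_{n-1}$ have the same shape. This already shows each $S^\lambda_i$ is an $\mathcal H_{2f+1,n-1}$-submodule.

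With the filtration in hand, I would define $\pi_i : S^\lambda_i \to S^{\mu^{(i)}}$ by $x_\t \mapsto x_{\t\!\!\!\downarrow_{n-1}}$ whenever $\t\!\!\!\downarrow_{n-1} \in \Std(\mu^{(i)})$, and by $0$ on the remaining basis vectors of $S^\lambda_i$. By construction, $\ker \pi_i$ has basis $\{x_\t : \t\!\!\!\downarrow_{n-1} \in \Std(\mu^{(k)}),\ k<i\}$, which is precisely $S^\lambda_{i-1}$. To verify that $\pi_i$ is $\mathcal H_{2f+1,n-1}$-linear, one compares the action formula of $T_j$ on $x_\t$ in $S^\lambda$ with that on $x_{\t\!\!\!\downarrow_{n-1}}$ in $S^{\mu^{(i)}}$. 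Because the axial distance between $j$ and $j{+}1$ in $\t$ equals that in $\t\!\!\!\downarrow_{n-1}$ whenever $j\le n-2$, and $n$ is never moved, the two expansions agree coefficient-by-coefficient after applying $\pi_i$, yielding the required isomorphism of right $\mathcal H_{2f+1,n-1}$-modules.

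The main obstacle is the bookkeeping in the Murphy formulas: one must verify that all contributions to $x_\t T_j$ from tableaux whose restriction to $\{2f+1,\ldots,n-1\}$ has shape strictly dominating $\mu^{(i)}$ lie in $S^\lambda_k$ with $k<i$, and that no unwanted ``mixing'' between different removable nodes of $\lambda$ occurs modulo $\mathcal H_{2f+1,n}^{\rhd\lambda}$. Rather than reproducing this from scratch, I would appeal to the detailed treatment of the action of the standard generators on Murphy's cellular basis in \cite[Chapter~6]{Ma}. An alternative, cleaner route is to pass first to the generic semisimple setting over $\mathbb Q(q)$, where the branching rule follows from Young's seminormal form and the corresponding character decomposition for the symmetric group, and then descend to $\mathcal Z$ by observing that both the filtration $S^\lambda_\bullet$ and the candidate quotient maps $\pi_i$ are defined integrally and are compatible with base change.
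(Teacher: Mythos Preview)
The paper does not prove this statement; it is quoted from \cite[Proposition~6.1]{Ma} as a known branching rule and used as a black box in Lemma~\ref{fiels1}. So there is no argument in the paper to compare against.

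Your outline contains a real error, though your fallbacks rescue it. The claim that $x_\t T_j$ involves only $x_\t$ and $x_{\t s_j}$ is false for the Murphy basis: when $j$ and $j{+}1$ lie in the same column of $\t$, the tableau $\t s_j$ is row-standard but not standard, and straightening $x_\lambda T_{d(\t s_j)}$ yields a combination of $x_\s$ over \emph{all} standard $\s\rhd\t s_j$ in tableau dominance. For instance with $\lambda=(2,1)$ and $\t=\t^\lambda s_2$ one computes $x_\t T_1=-q^{-1}x_\t-q^{-2}x_{\t^\lambda}$ in $S^\lambda$, and $\t^\lambda$ has the entry $n=3$ in a different box from $\t$. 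The axial-distance formula you invoke holds for Young's seminormal basis $\{f_\t\}$, not Murphy's $\{x_\t\}$; the unitriangular transition between them is exactly what produces the extra terms. What is true, and sufficient, is that every $\s$ appearing satisfies $\text{shape}(\s\!\downarrow_{n-1})\unrhd\text{shape}(\t\!\downarrow_{n-1})$ by the definition of tableau dominance, so the unwanted terms lie in $S^\lambda_{i-1}$ and your map $\pi_i$ is still well-defined on the quotient. Your alternative route through the seminormal form over $\mathbb Q(q)$ followed by integral descent is correct and avoids the Garnir bookkeeping entirely.
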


Suppose $(\lambda,\mu) \in\Lambda(n-2f)\times \Lambda(n-2f)$. Recall that a $\lambda$-tableau of type $\mu$ is obtained from $[\lambda]$ by inserting integers  $2f+i$ into
   $[\lambda]$ such that $2f+i$ appears $\mu_i$ times.  A $\lambda$-tableau $S$ of type $\mu$ is called row semi-standard if the  entries in each row of  $S$ are
   non-decreasing from left to right. A row semi-standard tableau
    $S$ is called semi-standard if $\lambda$ is a partition and the entries
    in each column of $S$ are strictly increasing downward.
    Let $\mathcal T^{ss}(\lambda,\mu)$ be the set of all semi-standard $\lambda$-tableaux of type $\mu$. When $\mu=(1, 1, \ldots, 1)\in \Lambda^+(n-2f)$,  $\mathcal T^{ss}(\lambda,\mu)$ is $\Std(\lambda)$.

    For any $\t\in\Std(\lambda)$, let $\mu(\t) $ be the  $\lambda$-tableau obtained from $\t$ by replacing each entry $i$ in $\t$ with $2f+k$ if $i$ appears in the $k$-th row of $\t^{\mu}$.
 By~\cite[Example~4.2(ii)]{Ma},  $\mu(\t)$ is a row semi-standard $\lambda$-tableau of type $\mu$.
Following \cite[Chapter~4, \S2]{Ma}, write \begin{equation}\label{sst} x_{S,\t}=\sum_{\s\in\Std(\lambda), \mu(\s)=S}q^{\ell(d(\s))}x_{\s,\t}\end{equation}
where  $(S, \t)\in \mathcal T^{ss}(\lambda,\mu)\times\Std(\lambda)$.

\begin{Lemma}\label{matsho}\cite[Corollary~4.10]{Ma} For any $\mu\in \Lambda(n-2f)$,
 the right $\mathcal H_{2f+1, n}$-module $x_\mu \mathcal  H_{2f+1, n}$  has basis
 $\{x_{S,\t}\mid S\in \mathcal T^{ss}(\lambda,\mu),\t\in\Std(\lambda), \lambda\in\Lambda^+(n-2f) \}$.
  Arrange all such semi-standard tableaux as $S_1, S_2, \cdots, S_k$ such that $S_i\in  \mathcal T^{ss}(\lambda^{(i)},\mu)$ and $i>j$
  whenever $\lambda^{(i)}\rhd \lambda^{(j)}$. Let $M_i$ be the $\mathcal Z$-submodule of $x_\mu \mathcal  H_{2f+1, n}$ spanned by
 $ \{x_{S_j,\t}\mid  \t\in \Std(\lambda^{(j)}), i\le j\}$. Then  $x_\mu \mathcal  H_{2f+1, n}$
  has a filtration $$x_\mu \mathcal  H_{2f+1, n}=M_1\supset M_2\supset \ldots\supset M_k\supset M_{k+1}= 0$$ such that $M_i/M_{i+1}\cong S^{\lambda^{(i)}}$. The required isomorphism sends   $ x_{S_i,\t}+M_{i+1} $ to $x_{\lambda^{(i)}}T_{d(\t)} +\mathcal H^{\rhd \lambda^{(i)}}_{2f+1, n} $.
\end{Lemma}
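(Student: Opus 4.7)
The plan is to follow Mathas's argument closely, working from the Jucys-Murphy basis of Theorem~\ref{mur}. For the basis statement, I would start with the obvious spanning set $\{x_\mu T_w : w \in \mathfrak S_{2f+1,n}\}$, expand each $T_w$ in the Jucys-Murphy basis, and reduce to elements of the form $x_\mu T_{d(\s)}^* x_\lambda T_{d(\t)}$ with $\s,\t \in \Std(\lambda)$. The critical step is to show that $x_\mu T_{d(\s)}^* x_\lambda$ depends, up to the scalar $q^{\ell(d(\s))}$, only on the row-semistandard tableau $\mu(\s)$; this follows from a standard double-coset analysis of $\mathfrak S_\mu \backslash \mathfrak S_{2f+1,n} / \mathfrak S_\lambda$, and packaging the resulting sums yields precisely the elements $x_{S,\t}$ defined in \eqref{sst}. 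A Garnir-type straightening then shows that if $S$ is row-semistandard but fails the column-strict condition, $x_{S,\t}$ already lies in $\mathcal H^{\rhd \lambda}_{2f+1,n}$; combined with the numerical identity
$$|\mathfrak S_\mu \backslash \mathfrak S_{2f+1,n}| = \sum_{\lambda \in \Lambda^+(n-2f)} |\mathcal T^{ss}(\lambda,\mu)| \cdot |\Std(\lambda)|,$$
this gives both spanning and linear independence of the claimed basis.

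For the filtration, I would order the semistandard tableaux $S_1,\ldots,S_k$ so that $i > j$ when $\lambda^{(i)} \rhd \lambda^{(j)}$, and verify that each $M_i$ is stable under the right action of $\mathcal H_{2f+1,n}$. This reduces to checking that $x_{S_j,\t} \cdot T_k$ lies in the span of $\{x_{S_{j'},\t'} : j' \ge j\}$: writing $x_{S_j,\t} \equiv x_\mu T_{d(\s)}^* x_{\lambda^{(j)}} T_{d(\t)}$ modulo the dominating ideal, the right action of $T_k$ is controlled by the cellularity from Theorem~\ref{mur}, and all resulting terms either stay within shape $\lambda^{(j)}$ (producing $x_{S_j,\t''}$ with $\t'' \in \Std(\lambda^{(j)})$) or escape into $\mathcal H^{\rhd \lambda^{(j)}}_{2f+1,n}$, which by the spanning argument of the first paragraph is itself spanned by $\{x_{S_{j'},\t'} : \lambda^{(j')} \rhd \lambda^{(j)}\}$ and hence by indices $j' > j$.

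For the final identification $M_i/M_{i+1} \cong S^{\lambda^{(i)}}$, the map $x_{S_i,\t} + M_{i+1} \mapsto x_{\lambda^{(i)}} T_{d(\t)} + \mathcal H^{\rhd \lambda^{(i)}}_{2f+1,n}$ sends a basis of the quotient onto the Jucys-Murphy basis $\{x_\t : \t \in \Std(\lambda^{(i)})\}$ of the cell module, so it is a $\mathcal Z$-linear isomorphism; right $\mathcal H_{2f+1,n}$-equivariance is inherited from the cellular structure, since both sides compute the action of each $T_k$ via the same structure constants modulo higher-shape terms. The main obstacle throughout is the Garnir straightening in the first paragraph: one must track the straightening coefficients carefully enough to certify that whenever $S$ is row-semistandard but not column-strict, the element $x_{S,\t}$ really does vanish in the quotient $\mathcal H_{2f+1,n}/\mathcal H^{\rhd \lambda}_{2f+1,n}$. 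This is precisely the combinatorial content of Murphy's cellular basis, and is where all the technical labor lives.
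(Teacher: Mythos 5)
The paper does not prove this lemma at all --- it is quoted verbatim as \cite[Corollary~4.10]{Ma}, so there is no internal argument to compare against. Your outline is a faithful reconstruction of the standard Murphy--Mathas proof that the citation points to: the double-coset analysis producing the elements $x_{S,\t}$, the Garnir straightening discarding row-semistandard but non-column-strict $S$, the dimension count for independence, and the dominance-ordered filtration whose layers are identified with cell modules via cellularity (note in particular that cellularity keeps the first index $S_j$ fixed under the right action modulo $\mathcal H^{\rhd\lambda^{(j)}}_{2f+1,n}$, which is exactly why each $M_i$ is a submodule). The sketch is correct in structure, with the one honest caveat you already flag: the straightening step carries all the combinatorial weight and is only named, not carried out.
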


\subsection{A cellular basis of the $q$-Brauer algebra  }In this subsection, we assume that $\mathcal B_n(q, z)$ is defined over $\Bbbk$.
Let  $$\Lambda_n=\left\{(f,\lambda)\mid \lambda\in \Lambda^+(n-2f), 0\leq f\leq \lfloor \frac{n}{2}\rfloor\right\} .$$ There is a partial order $\unlhd$ on  $ \Lambda_n$  such that $ (\ell, \mu)\unlhd (f, \lambda) $ if $\ell <f $ or $\ell=f$ and $ \mu\unlhd \lambda$. Write $(\ell, \mu)\lhd (f, \lambda)$
if  $(\ell, \mu)\unlhd (f, \lambda) $ and $(\ell, \mu)\neq  (f, \lambda) $.
Later on, we identify each $(\ell, \mu)$ in $\Lambda_n$ with $\mu$. So $\mu\unlhd \lambda$ if
$|\mu|>|\lambda|$ or $|\mu|=|\lambda|$ and $\mu\unlhd \lambda$.

\begin{Defn}\label{dlf}~\cite[Lemma~4.3]{RX},~\cite[(1.5)]{Wen1} Suppose $0\le f\le \lfloor \frac n2\rfloor$. Define
\begin{itemize}\item[(1)] $\mathcal D_{f,n} =\{s_{2f,i_f}s_{2f-1,j_f}\cdots s_{2,i_1}s_{1,j_1}\mid  i_1<\ldots<i_f, 2k-1\leq j_k<i_k\leq n, \text{ for }1\leq k\leq f \}$, \item[(2)]  $B_{f, n}=\{ t_{n-1}t_{n-2}\cdots t_{2f} t_{2f-2}  \cdots t_2\mid t_j=1 \text{ or }
 t_{j}=s_{i_j, j+1}, 1\le i_j\le j+1\}$.
\end{itemize}
 \end{Defn}
 When $f=0$, the RHS of the  equality in  Definition~\ref{dlf}(1) is the empty set and we set $\mathcal D_{0, n}=\{1\}$.

\begin{Lemma}\label{he1}  Suppose  $d^{-1}\in \mathcal D_{f, n-1}$ and  $h\in \{s_{n-1}, s_{n-2, n}\}$. We have
$dh=s_{l, n}  d_1$ for some integer $1\le l\le n$ and   $d_1^{-1}\in \mathcal D_{f, n-2}$.
\end{Lemma}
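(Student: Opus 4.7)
The plan is to prove the decomposition by an explicit computation in $\mathfrak S_n$. The starting observation is that, since $d^{-1}\in\mathcal{D}_{f,n-1}$ is a word in $s_1,\ldots,s_{n-2}$, both $d^{-1}$ and $d$ fix the index $n$; consequently $dh(n)=d(h(n))$. Since $h(n)=n-1$ when $h=s_{n-1}$ and $h(n)=n-2$ when $h=s_{n-2,n}=s_{n-2}s_{n-1}$, I would set $l:=dh(n)\in\{1,\ldots,n-1\}$. Because $s_{l,n}(n)=l$ and every element of $\mathcal{D}_{f,n-2}$ fixes both $n-1$ and $n$, this value of $l$ is forced in any decomposition $dh=s_{l,n}d_1$ with $d_1^{-1}\in\mathcal{D}_{f,n-2}$.

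Next, define $d_1:=s_{l,n}^{-1}dh$; I would verify $d_1\in\mathfrak S_{n-2}$ by checking $d_1(n)=n$ and $d_1(n-1)=n-1$ directly. For the latter, note $dh(n-1)=d(h(n-1))=d(n)=n$ for either choice of $h$, and $s_{l,n}^{-1}(n)=n-1$ whenever $l\le n-1$.

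The core of the proof is to show that $d_1^{-1}$ admits a reduced expression of the form $s_{2f,\tilde\imath_f}s_{2f-1,\tilde\jmath_f}\cdots s_{2,\tilde\imath_1}s_{1,\tilde\jmath_1}$ meeting the conditions defining $\mathcal{D}_{f,n-2}$. Since $h$ fixes every index $\le n-3$, for $k\le f$ with $2k\le n-3$ one has $d_1(2k-1)=s_{l,n}^{-1}(j_k)$ and $d_1(2k)=s_{l,n}^{-1}(i_k)$, so $\tilde\jmath_k=s_{l,n}^{-1}(j_k)$ and $\tilde\imath_k=s_{l,n}^{-1}(i_k)$. The monotonicity of $s_{l,n}^{-1}$ on $\{1,\ldots,n-1\}\setminus\{l\}$ then delivers the chain $\tilde\imath_1<\cdots<\tilde\imath_f\le n-2$ and the inequalities $2k-1\le \tilde\jmath_k<\tilde\imath_k$, once we verify $l\notin\{j_1,i_1,\ldots,j_f,i_f\}$. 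This last fact follows because $d$ sends the unpaired set $\{2f+1,\ldots,n-1\}$ bijectively to the complement of $\{j_1,i_1,\ldots,j_f,i_f\}$ in $\{1,\ldots,n-1\}$, and so $l=d(n-1)$ or $l=d(n-2)$ lies in that complement.

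The main technical obstacle is the case $i_f=n-1$ combined with $h=s_{n-2,n}$, where $d$ itself involves the generator $s_{n-2}$. In the easy subcase $i_f\le n-2$, the element $d$ lies in the parabolic subgroup generated by $s_1,\ldots,s_{n-3}$, hence commutes with $s_{n-1}$ (and with $s_{n-2}$ up to a permissible rearrangement), so one can often take $l=n-1$ or $l=n-2$ and $d_1=d$ directly. In the remaining case, one must instead use the key identity $s_{2f,n-1}s_{n-1}=s_{2f,n}$, immediate from the expansion $s_{2f,n-1}=s_{2f}s_{2f+1}\cdots s_{n-2}$, together with the commutation of $s_{n-1}$ past all the remaining factors of $d^{-1}$ (which involve only generators of index $\le n-3$), in order to manually isolate the $s_{l,n}$-prefix and read off the new indices $\tilde\imath_k,\tilde\jmath_k$. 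Minor boundary cases (such as $2f=n-2$, where $d_1(2f)=s_{l,n}^{-1}(d(h(n-2)))$ must be computed separately, or $j_f=2f-1$ where the factor $s_{2f-1,j_f}=1$) require small ad-hoc adjustments but do not alter the structure of the argument.
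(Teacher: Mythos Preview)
Your approach is different from the paper's, and it contains a genuine error.  The paper argues purely at the level of reduced words: it proves the commutation identity
\[
s_{j_f,2f-1}\,s_{i_f,2f}\,s_{k,n}=
\begin{cases}
s_{k,n}\,s_{j_f,2f-1}\,s_{i_f,2f} & i_f<k,\\
s_{k-1,n}\,s_{j_f,2f-1}\,s_{i_f-1,2f} & i_f\ge k,\ j_f<k-1,\\
s_{k-2,n}\,s_{j_f-1,2f-1}\,s_{i_f-1,2f} & i_f\ge k,\ j_f\ge k-1,
\end{cases}
\]
and then pushes $s_{k,n}$ (with $k=n-1$ or $k=n-2$) through the $f$ blocks of $d=s_{j_1,1}s_{i_1,2}\cdots s_{j_f,2f-1}s_{i_f,2f}$ one at a time.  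After $f$ steps one reads off both $l$ and the new parameters $\tilde\imath_k,\tilde\jmath_k$ directly from the word, and the inequalities defining $\mathcal D_{f,n-2}$ are immediate from the case analysis.  No characterisation of $\mathcal D_{f,n}$ beyond its definition is used.

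Your argument, by contrast, tries to read the parameters off from permutation values, asserting that $d(2k-1)=j_k$ and $d(2k)=i_k$ and hence $\tilde\jmath_k=s_{l,n}^{-1}(j_k)$, $\tilde\imath_k=s_{l,n}^{-1}(i_k)$.  This assertion is false.  Take $f=2$, $n\ge 5$, $(j_1,i_1)=(1,4)$, $(j_2,i_2)=(3,5)$.  Then $d^{-1}=s_{4,5}s_{3,3}s_{2,4}s_{1,1}=s_4s_2s_3$, so $d=s_3s_2s_4$ and $d(3)=2\neq 3=j_2$.  The point is that the earlier factors $s_{j_m,2m-1}s_{i_m,2m}$ with $m<k$ may move $j_k$ (whenever $j_k<i_m$ for some $m<k$), so the clean formula you use does not hold.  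Even the weaker statement that the \emph{sets} $\{d(2k-1),d(2k)\}$ and $\{j_k,i_k\}$ agree (or that $d$ is determined by its values on $\{1,\dots,2f\}$ plus order-preservation on the rest) is a characterisation of $\mathcal D_{f,n}$ that is not available at this point in the paper; it is essentially Proposition~\ref{equal}, whose proof relies on the lemma you are trying to establish.  So as written your argument is circular, and the specific computation it rests on is incorrect.
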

\begin{proof} Suppose $2f-1\le j_f<i_f\le n-1$ and $2f<k\le n-1$. We have
 \begin{equation}\label{fff123} s_{j_f, 2f-1} s_{i_f, 2f}  s_{k,n}=\begin{cases} s_{k,n} s_{j_f, 2f-1} s_{i_f, 2f}   &\text{if $i_f<k$, }\\
s_{k-1,n} s_{j_f,2f-1} s_{i_f-1, 2f} &\text{if $i_f\ge k, j_f<k-1$,}\\
s_{k-2, n} s_{j_f-1, 2f-1} s_{i_f-1, 2f} &\text{if $i_f\ge k, j_f\ge k-1$.}\\
\end{cases}\end{equation}
Now  the result follows if we use \eqref{fff123} $f$ times to rewrite
$dh$ successively.  \end{proof}

\begin{Lemma}\label{wen321} For any $0\le f\le \lfloor \frac n2\rfloor$, $\mathcal D^{-1}_{f, n}\subset B_{f, n}$, where
$\mathcal D^{-1}_{f, n}=\{d\mid d^{-1}\in \mathcal D_{f, n}\}$.
\end{Lemma}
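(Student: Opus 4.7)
The plan is to prove the inclusion by induction on $f \geq 0$. The base case $f = 0$ is immediate: $\mathcal{D}_{0,n} = \{1\}$, and $1 \in B_{0,n}$ by choosing every $t_j = 1$ (equivalently, $i_j = j+1$).

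For the inductive step, I would factor a general $d \in \mathcal{D}_{f,n}$ with parameters $(i_1, j_1, \ldots, i_f, j_f)$ as $d^{-1} = (d'')^{-1} \cdot s_{j_f, 2f-1}\, s_{i_f, 2f}$, where $d''$ corresponds to $(j_1, i_1, \ldots, j_{f-1}, i_{f-1})$. Since $i_{f-1} < i_f$, the element $d''$ lies in $\mathcal{D}_{f-1,\, i_f - 1}$, so the inductive hypothesis yields $(d'')^{-1} = t'_{i_f-2}\, t'_{i_f-3} \cdots t'_{2f-2}\, t'_{2f-4} \cdots t'_2$ with each $t'_j = s_{i'_j, j+1}$, $1 \leq i'_j \leq j+1$.

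The heart of the argument is to absorb the extra factor $s_{j_f, 2f-1}\, s_{i_f, 2f}$ into the $B_{f,n}$ form $t_{n-1} t_{n-2} \cdots t_{2f}\, t_{2f-2} \cdots t_2$. I would set $t_j = 1$ for $j \geq i_f$. Both descending blocks $s_{j_f, 2f-1} = s_{j_f-1} \cdots s_{2f-1}$ and $s_{i_f, 2f} = s_{i_f-1} \cdots s_{2f}$ are then pushed leftward through the $t'_j$'s via repeated commutations $s_a s_b = s_b s_a$ for $|a-b|\geq 2$ and the braid relation $s_a s_{a+1} s_a = s_{a+1} s_a s_{a+1}$. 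This produces new consecutive-part factors $t_j = s_{i_j, j+1}$ for $2f \leq j \leq i_f - 1$, together with a new even-part factor $t_{2f-2}$ that consolidates the trailing residue (in particular absorbing $s_{j_f, 2f-1}$ and $t'_{2f-2}$). The deeper even-part factors $t_{2f-2k}$ for $k \geq 2$ should remain identical to $t'_{2f-2k}$, since they involve only generators of index $\leq 2f-3$, which commute past every generator used in the rewriting.

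The main obstacle will be the explicit bookkeeping needed to verify that each newly constructed $t_j$ actually lies in the prescribed form $s_{i_j, j+1}$ with $1 \leq i_j \leq j+1$. I anticipate a case analysis on the value of $j_f$ relative to $2f-1, 2f, \ldots, i_f-1$, and on how $j_f$ compares with each $i'_j$. The inequalities $2f-1 \leq j_f < i_f$ from the definition of $\mathcal{D}_{f,n}$, together with the bounds $1 \leq i'_j \leq j+1 \leq i_f - 1$ from the inductive hypothesis, should keep every resulting $i_j$ within the required range. As a sanity check, the $f = 1$ case admits the explicit formula $t_j = s_{j-1,\, j+1}$ for $2 \leq j \leq j_1$, $t_j = s_j$ for $j_1 < j < i_1$, and $t_j = 1$ for $j \geq i_1$, which one verifies directly from the commutation relations matches $s_{j_1,1}\, s_{i_1,2}$.
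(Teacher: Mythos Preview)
Your induction is on $f$, whereas the paper inducts on $n$. Both are legitimate strategies, but your sketch contains a concrete error in the bookkeeping that would have to be fixed before the argument goes through.

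You assert that after the rewriting the new factor $t_{2f-2}$ ``consolidates the trailing residue (in particular absorbing $s_{j_f,2f-1}$ and $t'_{2f-2}$)'', while the higher consecutive factors $t_{2f},\ldots,t_{i_f-1}$ are built only from the pushed blocks. This is not what happens in general. Take $f=2$ with $(j_1,i_1,j_2,i_2)=(2,3,4,5)$. Then $(d'')^{-1}=s_1s_2$ with $t'_3=1$, $t'_2=s_1s_2$, and
\[
d^{-1}=(d'')^{-1}\,s_{j_2,3}\,s_{i_2,4}=s_1s_2\cdot s_3\cdot s_4=s_1s_2s_3s_4.
\]
The unique $B_{2,n}$ expression is $t_4=s_{1,5}=s_1s_2s_3s_4$ and $t_2=1$: the old $t'_2$ is absorbed \emph{upward} into $t_4$, not into $t_2$. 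So low-index generators coming from $t'_{2f-2}$ can migrate into the consecutive block $t_{i_f-1}\cdots t_{2f}$, contrary to your description. Your claim that the deeper even factors $t'_{2f-4},\ldots,t'_2$ pass through unchanged is correct (they commute with everything of index $\ge 2f-1$), but the interaction among $t'_{2f-2}$, the consecutive $t'_j$'s, and the two appended blocks is more entangled than a simple ``push leftward'' suggests.

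The paper sidesteps this by inducting on $n$ and invoking the preceding Lemma~\ref{he1}: if $d_1^{-1}\in\mathcal D_{f-1,n-1}$ and $h\in\{s_{n-1},\,s_{n-2}s_{n-1}\}$, then $d_1h=s_{l,n}\,d_2$ with $d_2^{-1}\in\mathcal D_{f-1,n-2}$. This peels off a single leftmost factor $t_{n-1}=s_{l,n}$ and simultaneously exhibits the remainder as an element of $\mathcal D_{f,n-1}^{-1}$, so one appeals directly to the inductive hypothesis on $n-1$ with no further rewriting. Your approach can presumably be repaired, but the repair essentially amounts to proving a statement of the same strength as Lemma~\ref{he1}.
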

\begin{proof} The result for $n=2$ is trivial.  In general, write
$d= d_{i_f, j_f} d_{i_{f-1}, j_{f-1}}\cdots d_{i_1, j_1}$ for any $d\in
 \mathcal D_{f, n}$, where
$$d_{i_f, j_f}=  s_{2f, i_f} s_{2f-1, j_f}.$$
Suppose $n=2f$. Thanks to Definition~\ref{dlf}(1),  $(i_f, j_f)=(n, n-1)$ and  $$d^{-1}\in \mathcal D^{-1}_{f-1, n-1}\subset B_{f-1, n-1}\subset B_{f, n},$$
where the first inclusion follows from induction assumption
 on $n-1$.  Suppose $n>2f$. If $i_f<n$ then
$d^{-1}\in \mathcal D^{-1}_{f, n-1}$  and hence $d^{-1}\in B_{f, n}$ by induction assumption on $n-1$. If $i_f=n$, then  there is a $d_1\in \mathcal D_{f-1, n-1}$
such that
\begin{equation}\label{wen123} d^{-1}=d_1 s_{j_f, 2f-1} s_{i_f, 2f}=\begin{cases}
d_1 s_{n-1} s_{j_f, 2f-1} s_{n-1, 2f} & \text{ if $j_f<n-1$,}\\
d_1s_{n-2} s_{n-1} s_{n-2, 2f-1} s_{n-1, 2f} & \text{ if $j_f=n-1$.}
\\
\end{cases}\end{equation}
We use  Lemma~\ref{he1}  to rewrite both $d_1 s_{n-1}$ and $d_1s_{n-2} s_{n-1}$
in \eqref{wen123}. So there is a $d_1'\in \mathcal D^{-1}_{f, n-1}$ such that
 $d^{-1}=t_{n-1} d_1' $.  Now, the result follows from induction assumption on $n-1$.
\end{proof}

 Recall that a Brauer diagram
is a diagram with $2n$ vertices arranged in two rows and $n$ edges such that each vertex belongs to a unique edge. The vertices in
the top (resp., bottom) row are labeled as $n+1, n+2, \ldots, 2n$ (resp., $1, 2, \ldots, n$) from left to right.
Then each Brauer diagram $d$  corresponds to a unique partition of $1, 2, \cdots, 2n$ into $n$ pairs
$\{ (i_k, j_k)\mid 1\le k\le n, i_k<j_k\}$.
We denote $\{ (i_k, j_k)\mid 1\le k\le n\}$   by $\text{conn}(d)$ and  call it  the
connector of $d$. An edge is called a horizontal edge if two vertices of it are at the same row. Otherwise, it is
called a vertical edge.  Let $d_f$ be the Brauer diagram such that
$$\text{conn}(d_f)=\{(2i-1, 2i), (n+2i-1, n+2i)\mid 1\le i\le f\}\cup \{(j, n+j)\mid 2f+1\le j\le n\}.$$
Consider a Brauer diagram on which there are exactly $f$ horizontal edges
$(n+2i-1, n+2i)$, $ 1\le i\le f$,
at the top row and there is no crossing
between any two vertical edges.  Let  $D_{f, n}$ be the set of all such Brauer diagrams. The symmetric group $\mathfrak S_n$ acts transitively on the right of  $D_{f, n}$. In \cite[(2.11)]{N21}, Dung Tien Nguyen
defined $\mathscr B_{0, n}=\{1\}$ and
\begin{equation}\label{N22} \mathscr B_{f, n}=\{w^{-1}\in B_{f, n} \mid d=d_f w\in  D_{f, n}\}\end{equation}
for $f>0$.
There is a  restriction  on the length of $d$ in \cite[(2.11)]{N21}. However, this restriction is  redundant since $d$ automatically satisfies this
condition if $w^{-1}\in B_{f,n}$~\cite[Lemma~2.1]{Wen1}.

\begin{Prop}\label{equal} For any $0\le f\le \lfloor \frac n 2\rfloor$,
$\mathcal D_{f, n}=\mathscr B_{f, n}$.
\end{Prop}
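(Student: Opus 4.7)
My plan is to establish the equality by combining the inclusion $\mathcal D_{f,n} \subseteq \mathscr B_{f,n}$ with a cardinality match, both sets being indexed by $D_{f,n}$ via the map $w \mapsto d_f w$.

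For the inclusion, fix $w = s_{2f,i_f} s_{2f-1,j_f} \cdots s_{2,i_1} s_{1,j_1} \in \mathcal D_{f,n}$. Lemma~\ref{wen321} already gives $w^{-1} \in B_{f,n}$, which handles the first condition in the definition of $\mathscr B_{f,n}$. The remaining task is to verify $d_f w \in D_{f,n}$, namely that no two vertical edges of the Brauer diagram $d_f w$ cross. I would prove this by induction on $f$. Decomposing $w = (s_{2f,i_f} s_{2f-1,j_f}) w'$ with $w' = s_{2f-2,i_{f-1}} \cdots s_{1,j_1} \in \mathcal D_{f-1,n}$ (the constraints $i_1 < \cdots < i_{f-1}$ and $2k-1 \le j_k < i_k$ for $k < f$ transfer automatically), the inductive hypothesis yields $d_{f-1} w' \in D_{f-1,n}$. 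A direct diagrammatic computation then shows that the factor $s_{2f,i_f} s_{2f-1,j_f}$, together with the passage from $d_{f-1}$ to $d_f$ (which inserts the standard bottom pair $(2f-1, 2f)$), creates one additional bottom horizontal edge and rearranges the remaining vertical edges without introducing any crossing, with the key input being $i_{f-1} < i_f$.

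For the cardinality, the assignment $w \mapsto d_f w$ from $\mathcal D_{f,n}$ to Brauer diagrams is injective: distinct parametrizing tuples $(j_1, i_1, \ldots, j_f, i_f)$ produce distinct diagrams, which can be checked inductively by noting that the ``largest'' pair $(j_f, i_f)$ can be recovered from the diagram structure. A direct enumeration of the parametrizing tuples then matches $|D_{f,n}| = \tfrac{n!}{2^f f! (n-2f)!}$ (the number of $f$-matchings of $\{1, \ldots, n\}$), hence $|\mathcal D_{f,n}| = |D_{f,n}|$. On the other hand, by construction $\mathscr B_{f,n}$ parametrizes a subset of $D_{f,n}$ injectively via $w^{-1} \mapsto d_f w$, so $|\mathscr B_{f,n}| \le |D_{f,n}|$. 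Combined with $\mathcal D_{f,n} \subseteq \mathscr B_{f,n}$, this forces $\mathcal D_{f,n} = \mathscr B_{f,n}$.

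The main obstacle is the inductive diagram computation in the first step. Verifying that $s_{2f,i_f} s_{2f-1,j_f}$ acts on $d_{f-1} w'$ to give precisely $d_f w$ with the correct non-crossing structure requires careful application of the defining formulas for $s_{i,j}$ and the braid relations, and splits into subcases depending on the relative positions of the $j_k$'s and $i_k$'s (in particular, whether $j_f = 2f-1$ so that $s_{2f-1, j_f} = 1$, versus $j_f > 2f-1$ where the new horizontal edge interacts with previously-placed structure). This is analogous to, but technically more involved than, the computation carried out in Lemma~\ref{he1}.
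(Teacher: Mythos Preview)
Your proposal is correct and follows essentially the same strategy as the paper: establish the inclusion $\mathcal D_{f,n}\subseteq\mathscr B_{f,n}$ via Lemma~\ref{wen321} together with $d_f w\in D_{f,n}$, and then conclude equality by a cardinality count. The only difference is that the paper invokes the fact $d_f w\in D_{f,n}$ as a well-known result for Brauer diagrams and cites \cite[Lemma~4.3]{RX} and \cite[Remark~3.18]{N18} for the cardinality $\frac{n!}{2^f(n-2f)!f!}$, whereas you sketch direct inductive and enumerative arguments for both; your route is self-contained but not genuinely different.
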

\begin{proof} There is a well-known result for Brauer diagrams which says that  $d_f w\in D_{f, n}$ if $w\in \mathcal D_{f, n}$.   Thanks to
Lemma~\ref{wen321} and \eqref{N22}, $\mathcal D_{f, n}\subseteq \mathscr B_{f, n}$. We have $\mathcal D_{f, n}=\mathscr B_{f, n}$ since
the cardinalities of  $\mathcal D_{f, n}$ and $\mathscr B_{f, n}$ are
$\frac{n!}{2^f(n-2f)!f!}$ (see \cite[Lemma~4.3]{RX} and \cite[Remark~3.18]{N18}).
\end{proof}

Thanks to Proposition~\ref{equal}, we can   use $\mathcal D_{f, n}$ to replace $\mathscr B_{f, n}$ when we state
the  cellular basis of $\mathcal B_n$ in \cite[Theorem~3.2]{N21}.
This  has an advantage  when we  do some explicit computation later on.


\begin{Theorem}\label{sstan}  \cite[Theorem~3.2]{N21} The $q$-Brauer algebra
 $\mathcal B_n$ is free over $\Bbbk$ with cellular basis
 $$S=\{ C_{(w,\s),(v,\t)}^\lambda \mid  (w, \s), (v, \t)\in I(\lambda), (f, \lambda)\in \Lambda_n\}$$
 in the sense of \cite[Definition~1.1]{GL}, where   $ I(\lambda):=\mathcal D_{f, n}\times \Std(\lambda)$ and $ C_{(w,\s),(v,\t)}^\lambda= \sigma (T_{w})E^f x_{\s\t} T_v$.
 The required anti-involution is $\sigma$ in Lemma~\ref{anti}.
\end{Theorem}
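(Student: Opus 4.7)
The plan is to follow the strategy of \cite[Theorem~3.2]{N21}, but to use Proposition~\ref{equal} so that the index set $\mathscr B_{f,n}$ appearing there is replaced throughout by $\mathcal D_{f,n}$. The first thing to check is that $|S|=(2n-1)!!$, which matches Wenzl's rank formula~\cite[Theorem~3.8]{Wen1}. This follows from $|\mathcal D_{f,n}|=\frac{n!}{2^f f!(n-2f)!}$ (see \cite[Lemma~4.3]{RX}) together with $\sum_{\lambda\in\Lambda^+(n-2f)}|\Std(\lambda)|^{2}=(n-2f)!$ and the combinatorial identity $\sum_{f=0}^{\lfloor n/2\rfloor}\frac{n!}{2^{f}f!(n-2f)!}\cdot(n-2f)!=(2n-1)!!$.

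Second, I would show that $S$ spans $\mathcal B_n$ (and hence, by the cardinality count, is a $\Bbbk$-basis). Starting from the Wenzl-type basis $\{\sigma(T_w)E^{f}T_{u}T_{v}\mid w^{-1},v^{-1}\in\mathcal D_{f,n},\ u\in\mathfrak S_{2f+1,n}\}$ of $\mathcal B_n$, one rewrites the middle factor $E^{f}T_{u}$ using that $x_{\s\t}\in\mathcal H_{2f+1,n}$ and that the subalgebra generated by $T_{2f+1},\ldots,T_{n-1}$ commutes with $E^{f}=E_1E_3\cdots E_{2f-1}$ (by Lemma~\ref{k3}(2) and (4)). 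Thus $E^{f}T_{u}$ is a $\Bbbk$-linear combination of $E^{f}x_{\s\t}$ for $\s,\t\in\Std(\lambda)$, $\lambda\in\Lambda^{+}(n-2f)$, via the Murphy basis in Theorem~\ref{mur}.

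Third, I would check the cellularity axioms. The $\sigma$-invariance is immediate: $\sigma(E^{f})=E^{f}$ by Lemma~\ref{k2}(2), $\sigma$ fixes each $T_i$, and $\sigma(x_{\s\t})=x_{\t\s}$ follows from $\sigma|_{\mathcal H_n}=\ast$, so $\sigma(C_{(w,\s),(v,\t)}^{\lambda})=C_{(v,\t),(w,\s)}^{\lambda}$. For the multiplicative axiom one has to show that, modulo the two-sided ideal $\mathcal B_n^{\rhd\lambda}$ spanned by basis elements with label strictly dominating $\lambda$, right multiplication of $C_{(w,\s),(v,\t)}^{\lambda}$ by a generator $T_i$ or $E_1$ produces a combination of $C_{(w,\s),(v',\t')}^{\lambda}$ whose coefficients are independent of $(w,\s)$. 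The $T_i$ case is routine since $T_{v}T_{i}$ is rewritten using the Hecke relations \eqref{kkk1} and the coset combinatorics of $\mathcal D_{f,n}$ inside $\mathfrak S_n$. The genuinely delicate case is multiplication by $E_1$: one must show that $E^{f}x_{\s\t}T_{v}E_{1}$, after expressing $T_{v}E_{1}$ via the generators and relations in Definition~\ref{qqq} and Lemma~\ref{k3}, either stays in the span of $C_{(w,\s),(v',\t')}^{\lambda}$ (when no new horizontal edge is created at the bottom) or, if a new horizontal edge is created, factors through $E^{f+1}$ and hence lies in $\mathcal B_{n}^{\rhd\lambda}$ by the definition of the dominance order on $\Lambda_n$.

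The main obstacle will be this last step: controlling the possible $z$-coefficients in $E^{f}\cdot(\cdots)\cdot E_1\cdot(\cdots)$ and verifying that terms producing fewer than $f+1$ horizontal edges land back in the $\lambda$-level span with coefficients depending only on $(v,\t)$ and not on $(w,\s)$. This is handled by repeated use of Lemma~\ref{k2}(4)(5) to push $E^{f}$ past tangled products of $T_i$'s, together with the key absorption identity $E_i T_{i+1}^{\pm 1}E_i=z^{\pm 1}E_i$ from Lemma~\ref{k3}(5), which is precisely what prevents uncontrolled $z$-scalars and confines the non-cellular behaviour to the larger cells.
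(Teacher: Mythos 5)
Your proposal reconstructs a full proof of the theorem, whereas the paper itself does not prove it: Theorem~\ref{sstan} is quoted from \cite[Theorem~3.2]{N21}, and the only new content the paper supplies at this point is Proposition~\ref{equal}, which shows $\mathcal D_{f,n}=\mathscr B_{f,n}$ so that Nguyen's index set may be replaced by $\mathcal D_{f,n}$ in the statement. Your outline of the underlying argument (spanning via the Murphy basis of $\mathcal H_{2f+1,n}$ together with the commutation of $E^{f}=E_1E_3\cdots E_{2f-1}$ with $T_{2f+1},\dots,T_{n-1}$; $\sigma$-invariance from Lemma~\ref{k2}(2) and $\sigma|_{\mathcal H_n}=\ast$; the observation that terms factoring through $E^{f+1}$ land in $\mathcal B_n^{\rhd\lambda}$ because the partial order on $\Lambda_n$ places larger $f$ strictly higher) is faithful to how the result is actually established in \cite{N21}, so the extra work is legitimate --- it is just not what this paper does, and a referee would expect you either to cite \cite{N21} as the paper does or to carry out the $E_1$-multiplication step in full rather than describing it.

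There is one concrete error in what you wrote. The set $S$ is indexed by \emph{pairs} $(w,\s),(v,\t)\in I(\lambda)$, so $|S|=\sum_{f}|\mathcal D_{f,n}|^{2}\sum_{\lambda}|\Std(\lambda)|^{2}=\sum_{f}\bigl(\tfrac{n!}{2^{f}f!(n-2f)!}\bigr)^{2}(n-2f)!$, and it is this sum, with the square, that equals $(2n-1)!!$. The identity you state, $\sum_{f}\tfrac{n!}{2^{f}f!(n-2f)!}\cdot(n-2f)!=(2n-1)!!$, is false already for $n=3$, where the left side is $6+3=9$ while $5!!=15$; with the square restored one gets $6+9=15$ as required. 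Once corrected, the rest of your first step is fine: a spanning set of cardinality $(2n-1)!!$ in a free $\Bbbk$-module of that rank (Wenzl, \cite[Theorem~3.8]{Wen1}) is automatically a basis over the commutative ring $\Bbbk$.
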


 \begin{Lemma}\label{efb}\cite[Corollary~3.1]{N21}
Suppose $0\le f\le \lfloor \frac n 2\rfloor$. Then $E^f \mathcal B_n +\mathcal B_n^{f+1}$ is a left $\mathcal H_{2f+1, n}$-module spanned by $\{E^f T_d+\mathcal B_n^{f+1}\mid d\in \mathcal D_{f, n}\}$, where
$\mathcal B_n^{f+1} $ is the two-sided ideal of $\mathcal B_n$ generated by $E^{f+1}$.\end{Lemma}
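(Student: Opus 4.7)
The plan is to establish the two assertions of the lemma in turn: first the left $\mathcal{H}_{2f+1,n}$-module structure of $E^f \mathcal{B}_n + \mathcal{B}_n^{f+1}$, and then the spanning claim modulo $\mathcal{B}_n^{f+1}$. For the module structure, I would start with the factorisation $E^f = E_1 E_3 \cdots E_{2f-1}$ supplied by Lemma~\ref{k3}(4). For any generator $T_i$ of $\mathcal{H}_{2f+1, n}$, i.e.\ $2f+1 \le i \le n-1$, and any $1 \le j \le f$, one has $|i - (2j-1)| \ge 2$, so Lemma~\ref{k3}(2) gives $T_i E_{2j-1} = E_{2j-1} T_i$. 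Iterating over the factors yields $T_i E^f = E^f T_i$, and hence $\mathcal{H}_{2f+1, n} \cdot E^f \mathcal{B}_n \subseteq E^f \mathcal{B}_n$. Because $\mathcal{B}_n^{f+1}$ is a two-sided ideal, adding it preserves this, so $E^f \mathcal{B}_n + \mathcal{B}_n^{f+1}$ is a left $\mathcal{H}_{2f+1,n}$-module.

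For the spanning assertion I would exploit Theorem~\ref{sstan}: every element of $\mathcal{B}_n$ is a $\Bbbk$-combination of $C^\mu_{(w,\s),(v,\t)} = \sigma(T_w) E^g x_{\s\t} T_v$ with $(g, \mu) \in \Lambda_n$ and $(w,\s),(v,\t) \in \mathcal{D}_{g,n} \times \Std(\mu)$. The cellular axioms, together with the fact that $E^g = E^{f+1} E_{2f+3}\cdots E_{2g-1}$ for $g \ge f+1$, imply that $\mathcal{B}_n^{f+1}$ coincides with the $\Bbbk$-span of those basis elements with $g \ge f+1$. So modulo $\mathcal{B}_n^{f+1}$ only the basis elements with $g \le f$ survive, and it suffices to show that for each such $b$, the product $E^f b$ lies in $\sum_{d \in \mathcal{D}_{f,n}} \mathcal{H}_{2f+1, n} \cdot E^f T_d + \mathcal{B}_n^{f+1}$. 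Since $x_{\s\t} T_v \in \mathcal{H}_n$, the crucial computation is the reduction of $E^f \sigma(T_w) E^g$ modulo $\mathcal{B}_n^{f+1}$. Using $E^k T_{2k-1} = q E^k$ (Lemma~\ref{k2}(6)), $E_i T_{i+1}^{\pm 1} E_i = z^{\pm 1} E_i$ (Lemma~\ref{k3}(5)), the commutation identities in Lemmas~\ref{k2} and~\ref{k3}, together with Definition~\ref{qqq}(5), one rewrites the leading factors until the leftmost $E^f$ absorbs a Hecke-subalgebra prefix lying in $\mathcal{H}_{2f+1,n}$, leaving a single right multiplier of the form $T_d$ with $d \in \mathcal{D}_{f,n}$ (identified via Proposition~\ref{equal} with the diagrammatic set $\mathscr{B}_{f,n}$).

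The main obstacle is the case $g < f$: a priori $E^f \sigma(T_w) E^g$ can lie genuinely in the level-$f$ cell rather than in $\mathcal{B}_n^{f+1}$, so one must pin down its leading term inside the $\mathcal{H}_{2f+1,n}$-span of the $E^f T_d$. I would handle this by induction on $f$, invoking the algebra homomorphism $\phi\colon \mathcal{B}_{n-2} \to \tilde{E}_1 \mathcal{B}_n \tilde{E}_1$ from Proposition~\ref{ffunc1}, which satisfies $\phi(E^i) = q z^{-1} T_1^{-1} T_2 E^{i+1}$; this lets one transport the level-$(f-1)$ statement in $\mathcal{B}_{n-2}$ into the level-$f$ statement in $\mathcal{B}_n$. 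The base case $f = 0$ is trivial since $E^0 = 1$ and $\mathcal{D}_{0,n} = \{1\}$ reduce the claim to $\mathcal{B}_n = \mathcal{H}_n + \mathcal{B}_n^1$, which is precisely~\eqref{quot1}.
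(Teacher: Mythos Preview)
The paper does not give its own proof of this lemma: it is quoted verbatim from \cite[Corollary~3.1]{N21} and used as a black box. So there is no ``paper's proof'' to compare against, only the cited source.

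Your plan is a reasonable independent route, and the module-structure part is correct and complete: the commutation $T_iE^f=E^fT_i$ for $i\ge 2f+1$ follows exactly as you say from Lemma~\ref{k3}(2)(4). For the spanning part, leaning on the cellular basis of Theorem~\ref{sstan} is efficient, and you are right that modulo $\mathcal B_n^{f+1}$ one only needs to control $E^f\sigma(T_w)E^g$ for $g\le f$, multiplied on the right by something in $\mathcal H_n$. One caveat worth flagging: in \cite{N21} the result you are proving is Corollary~3.1 while the cellular basis is Theorem~3.2, so in that paper the logical dependence runs the other way; within the present paper both are imported, so your argument is not circular here, but it would be if you tried to rebuild \cite{N21} from scratch.

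The genuine gap is in the inductive step via $\phi$ from Proposition~\ref{ffunc1}. That map lands in $\tilde E_1\mathcal B_n\tilde E_1$, not in $\mathcal B_n$, and at this point in the paper you only know it is a homomorphism (the isomorphism is Theorem~\ref{iso111}, which \emph{uses} Lemma~\ref{efb}). So from the level-$(f-1)$ spanning statement for $\mathcal B_{n-2}$ you obtain control of $\tilde E_1 E^f\mathcal B_n\tilde E_1$ modulo $\phi(\mathcal B_{n-2}^f)$, and it is not clear how you promote this to $E^f\mathcal B_n$ modulo $\mathcal B_n^{f+1}$: you would still need to handle the right $\tilde E_1$ and relate $\phi(\mathcal B_{n-2}^f)$ to $\mathcal B_n^{f+1}$, and both of those steps already require something like the lemma you are trying to prove. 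A cleaner path for the $g\le f$ case is to work directly with $E^f\mathcal H_n$: use the coset decomposition underlying $\mathcal D_{f,n}$ (Definition~\ref{dlf}, Proposition~\ref{equal}) together with the absorption identities $E^fT_{2l+1,1}=E^fT_{2,2l+2}$, $E^fT_{2k-1}=qE^f$ of Lemma~\ref{k2}(4)(6) to reduce any $E^fT_y$, $y\in\mathfrak S_n$, to $\mathcal H_{2f+1,n}\cdot E^fT_d$; then for $0<g\le f$ the extra factor $E^g$ either falls into $\mathcal B_n^{f+1}$ or is absorbed via Lemma~\ref{k3}(5)(6).
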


\begin{Theorem}\label{iso111}Let
$\phi: \mathcal B_{n-2}\rightarrow \tilde E_1 \mathcal B_{n} \tilde E_1$ be the algebra homomorphism
in Proposition~\ref{ffunc1}. Then $\phi$ is an algebra isomorphism.   \end{Theorem}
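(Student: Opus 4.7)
The plan is to prove $\phi$ is an isomorphism by separately establishing injectivity and surjectivity, using the cellular basis of Theorem~\ref{sstan} on both sides.

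For injectivity, I first establish the Hecke shift identity $\phi(T_w) = \tilde E_1 T_{w'}$ for $w \in \mathfrak S_{n-2}$, where $w' \in \mathfrak S_{3, n}$ is obtained from $w$ by the index shift $s_i \mapsto s_{i+2}$. This follows by induction on the length of $w$ from the identity $\tilde E_1 T_j \tilde E_1 = \tilde E_1 T_j$ for $j \ge 3$, itself a direct calculation using the defining relations in Definition~\ref{qqq} together with $E_1 T_2 E_1 = z E_1$. Combining this with Proposition~\ref{ffunc1}(2) and the auxiliary identity $E^k \tilde E_1 = E^k$ for $k \ge 1$ (an induction on $k$ using Definition~\ref{k2222}(1) and Lemma~\ref{k3}), the image of a cellular basis element of $\mathcal B_{n-2}$ should decompose as
\[
\phi\bigl(\sigma(T_w) E^f x_{\s\t} T_v\bigr) \;=\; qz^{-1} T_1^{-1} T_2 \cdot \sigma(T_{w'}) E^{f+1} x_{\s'\t'} T_{v'},
\]
where primes denote shifts of the Hecke data by $2$; the right-hand side is $qz^{-1} T_1^{-1} T_2$ times a cellular basis element of $\mathcal B_n$ at layer $f+1$. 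Since cellular basis elements of $\mathcal B_n$ are $\Bbbk$-linearly independent and left multiplication by the unit $qz^{-1} T_1^{-1} T_2$ preserves linear independence, the images under $\phi$ of distinct cellular basis elements of $\mathcal B_{n-2}$ are $\Bbbk$-linearly independent, establishing injectivity.

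For surjectivity, it suffices to show every sandwich $\tilde E_1 \cdot C^\lambda_{(w,\s),(v,\t)} \cdot \tilde E_1$, with $C^\lambda_{(w,\s),(v,\t)}$ in the cellular basis of $\mathcal B_n$, lies in the image of $\phi$. Expanding $\tilde E_1 = qz^{-1} T_1^{-1} T_2 E_1$ on both sides, the problem reduces to expressing $E_1 \cdot \sigma(T_w) E^f x_{\s\t} T_v \cdot E_1$ (up to a unit factor $qz^{-1} T_1^{-1} T_2$) as an image under $\phi$. For $f = 0$, this amounts to simplifying $E_1 x_{\s\t} E_1$ inside $E_1 \mathcal H_n E_1$ using $E_1 T_1 = q E_1$, $E_1 T_2 E_1 = z E_1$ and the Hecke relations to reach a $\Bbbk$-combination of $\delta E_1 T_{u''}$ for $u'' \in \mathfrak S_{3, n}$, which match $\delta \phi(T_{u'''})$ (up to the unit factor) for the unshifted $u''' \in \mathfrak S_{n-2}$. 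For $f \ge 1$, an analogous but more intricate reduction handles the $E^f$ factor, using Lemma~\ref{k3}(5), (7), (8) and the explicit reduced expressions for $\mathcal D_{f, n}$ from Definition~\ref{dlf}(1) and Lemma~\ref{wen321}.

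The main obstacle is the combinatorial bookkeeping for surjectivity at layers $f \ge 1$, specifically handling the case when the initial block $s_{j_1, 1}$ of the reduced expression of $w^{-1}$ ends in $s_1$ and therefore interacts nontrivially with the $\tilde E_1$ factors on both sides. Iterated applications of $\tilde E_1 T_1 \tilde E_1 = q \tilde E_1$, $E_1 T_2 E_1 = z E_1$, and the braid relations---together with induction on $f$---should absorb these low-index factors and land in the image formula above, completing the proof of surjectivity and hence the theorem.
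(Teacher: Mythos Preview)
Your injectivity argument is essentially the paper's: compute the image of a cellular basis element of $\mathcal B_{n-2}$ via Proposition~\ref{ffunc1}(2) and the Hecke shift, observe that it equals $qz^{-1}T_1^{-1}T_2$ times a cellular basis element of $\mathcal B_n$ at level $f+1$ (the shifted $e_1,e_2$ land in $\mathcal D_{f+1,n}$ because the trivial block $s_{2,2}s_{1,1}=1$ supplies the missing bottom layer), and conclude by linear independence.

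For surjectivity, however, your plan is both different from the paper's and incomplete. You propose to reduce each sandwich $\tilde E_1\, C^\lambda_{(w,\s),(v,\t)}\,\tilde E_1$ individually, and you concede that the case $f\ge 1$ with low-index factors in $w$ is an unresolved ``obstacle''. The paper avoids this entirely by arguing at the level of \emph{algebra generators} rather than basis elements. The key computation is the single identity
\[
E_1 T_{2,i_1}T_{1,j_1}E_1 \;=\; aE_1E_3 T_{w_1} + bE_1 T_{w_2}
\]
for suitable scalars $a,b$ and $w_1,w_2\in\mathfrak S_{3,n}$ (with $a=0$ unless $i_1\ge 4$, $j_1\ge 3$). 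Combined with the coset description of $\mathcal D_{1,n}$ and Definition~\ref{qqq}(2)(3), this gives $E_1\mathcal H_n E_1 = E_1E_3\mathcal H_{3,n}+E_1\mathcal H_{3,n}$; together with Theorem~\ref{sstan} it follows that $E_1\mathcal B_n E_1$ is generated by $E_1E_3$ and $E_1T_j$ for $3\le j\le n-1$. Hence $\tilde E_1\mathcal B_n\tilde E_1$ is generated by $\tilde E_1 E_3$ and $\tilde E_1 T_j$, which are precisely $\phi(E_1)$ and $\phi(T_{j-2})$. Surjectivity is then immediate, with no layer-by-layer induction or combinatorial bookkeeping required. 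You should replace your surjectivity sketch with this argument.
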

\begin{proof}  Thanks to Theorem~\ref{sstan}, $\mathcal B_{n-2}$ has basis  $$\{\sigma(T_{d_1}) E^f T_w T_{d_2}\mid
d_1, d_2\in \mathcal D_{f, n-2}, w\in \mathcal H_{2f+1, n-2}, 0\le f\le \lfloor \frac{ n-2}{2}\rfloor\}.$$
By Proposition~\ref{ffunc1}(2), $\phi(\sigma(T_{d_1})
 E^f T_w T_{d_2})=qz^{-1} T_1^{-1}T_2\sigma(T_{e_1}) E^{f+1}  T_{w'} T_{e_2}$ where $e_1, e_2, w'$ are obtained from $d_1, d_2, w$ by replacing each factor  $s_i$ in  $d_1, d_2, w$ by $s_{i+2}$.
 This proves that $\phi(\sigma(T_{d_1})
 E^f T_w T_{d_2})$ is a basis element of $\mathcal B_n$, and hence $\phi$ is a monomorphism.
 On the other hand, there are some scalars $a, b\in \Bbbk$ and $w_1, w_2\in \mathfrak S_{3, n}$ such that \begin{equation}\label{aast} E_1 T_{2, i_1} T_{1, j_1}  E_1= aE_1E_3T_{w_1} +bE_1 T_{w_2}\end{equation}  and $a=0$ unless
$i_1\ge 4$ and $j_1\ge 3$.  In general, since $\mathcal D_{1, n}$ is a right coset representatives of $\mathfrak S_2\times \mathfrak S_{3, n}$, by Definition~\ref{qqq}(2)(3) and
\eqref{aast},
$E_1\mathcal H_n E_1=E_1E_3 \mathcal H_{3, n}+E_1 \mathcal H_{3, n}$.
 By  Theorem~\ref{sstan} and \eqref{aast},
 $E_1 \mathcal  B_nE_1 $,
 is generated by $ E_1E_3$, $E_1T_j$, $3\leq j\leq n-1$.
 Consequently,  $\tilde E_1\mathcal B_{n} \tilde E_1$ is generated by
 $ \tilde E_1E_3$, $\tilde E_1T_j$, $3\leq j\leq n-1$ and hence
  $\phi$ is an epimorphism.
\end{proof}

 \begin{Cor}\label{e1be1} Suppose  $n\ge 2$.
 \begin{itemize}\item [(1)]  There is an algebra isomorphism $\psi:
  \mathcal B_{n-2}\rightarrow E_1 \mathcal B_n E_1$ sending
  $1, E_1$ and $ T_i$ to $\delta^{-1} E_1, \delta^{-1} E_1E_3$ and $ \delta^{-1} E_1T_{i+2}$
  for all $1\le i\le n-3$.
  \item[(2)] The right $\mathcal B_n$-module $ E^f \mathcal B_{n}$ is a left $ E^f \mathcal B_{n} E^f$-module generated by
$ E^f T_d$, $d\in \mathcal D_{f, n}$.
Further, $E^f \mathcal B_n E^f $ is generated by $E^{f+1}$ and $ E^f T_{2f+i}$, $1\le i\le n-2f-1$.
\end{itemize}
  \end{Cor}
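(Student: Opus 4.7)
For (1), the plan is to deduce the isomorphism from Theorem~\ref{iso111} by a change of idempotent. Set $e:=\delta^{-1}E_1$ and $f:=\tilde E_1$. Both are idempotent, and a direct computation using $E_1T_1=qE_1$ and $E_1T_2E_1=zE_1$ gives $ef=e$ (via $E_1\tilde E_1=qz^{-1}E_1T_1^{-1}T_2E_1=E_1$) and $fe=f$ (via $\tilde E_1E_1=\delta\tilde E_1$). Standard Peirce-type arguments then produce a $\Bbbk$-linear bijection $\rho:f\mathcal B_nf\to e\mathcal B_ne$ defined by $y\mapsto ey$, with inverse $x\mapsto fx$, where multiplicativity follows from the identity $ey=eye$ valid for $y\in f\mathcal B_nf$ (using $y=fyf$, $ef=e$, $fe=f$). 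Setting $\psi:=\rho\circ\phi$ and evaluating on the generators of $\mathcal B_{n-2}$ via Proposition~\ref{ffunc1} and $e\tilde E_1=e$ yields precisely the stated formulas for $\psi$.

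For the second statement of (2), I iterate (1): the composed map gives an algebra isomorphism $\mathcal B_{n-2f}\cong E^f\mathcal B_nE^f$ sending $E_1$ to a nonzero scalar multiple of $E^{f+1}=E^fE_{2f+1}$ and each $T_i$ to a nonzero scalar multiple of $E^fT_{2f+i}$. Since $\mathcal B_{n-2f}$ is generated by $E_1,T_1,\ldots,T_{n-2f-1}$, this identifies the asserted set as an algebra generating set for $E^f\mathcal B_nE^f$. For the first statement of (2), I argue by downward induction on $f$. The base case $f=\lfloor n/2\rfloor$ (where $\mathcal B_n^{f+1}=0$) is immediate from Lemma~\ref{efb} together with $E^fE^f=\delta^fE^f$ and the fact that $\mathcal H_{2f+1,n}$ commutes with $E^f$ (by Defn.~\ref{qqq}(4) applied to each factor of $E^f=E_1E_3\cdots E_{2f-1}$); these give $hE^fT_d=\delta^{-2f}(E^fhE^f)\cdot E^fT_d\in(E^f\mathcal B_nE^f)\cdot E^fT_d$. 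For the inductive step, Lemma~\ref{efb} writes $E^fb=\sum_dh_dE^fT_d+c$ with $c\in E^f\mathcal B_n\cap\mathcal B_n^{f+1}$; using $c=\delta^{-f}E^fc$ together with $c\in\mathcal B_nE^{f+1}\mathcal B_n$ and the commutation of $E_{2f+1}$ with $E^f$, one rewrites $c$ in the form $\delta^{-2f}\sum_i(E^fu_iE^f)\cdot E^{f+1}v_i$, and the inductive hypothesis applied to $E^{f+1}v_i\in E^{f+1}\mathcal B_n$ absorbs each $v_i$ via $\{E^{f+1}T_{d'}\}_{d'\in\mathcal D_{f+1,n}}$.

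The main obstacle lies in the final reduction of each summand $(E^f\mathcal B_nE^f)\cdot E^{f+1}T_{d'}$ (indexed by $d'\in\mathcal D_{f+1,n}$) into the desired $\sum_{d\in\mathcal D_{f,n}}(E^f\mathcal B_nE^f)\cdot E^fT_d$. The strategy is to reapply Lemma~\ref{efb} to each $E^f\cdot(E_{2f+1}T_{d'})$, producing a new remainder in a deeper level of the filtration $\mathcal B_n\supset\mathcal B_n^{f+1}\supset\cdots\supset\mathcal B_n^{\lfloor n/2\rfloor+1}=0$, and iterate; termination is guaranteed by the finite length of this filtration, while Proposition~\ref{equal} (identifying $\mathcal D_{f,n}$ with the diagram set $\mathscr B_{f,n}$) supplies the combinatorial bookkeeping for coset representatives throughout the reduction.
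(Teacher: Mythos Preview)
Your argument for (1) via the Peirce change of idempotent is correct and clean; the identities $ef=e$, $fe=f$ are exactly right, and $\psi=\rho\circ\phi$ reproduces the stated formulas. Your iteration of (1) to obtain the second assertion of (2) is also fine, since under the composite isomorphism $E^i$ goes to $\delta^{-f}E^{f+i}$ and $T_i$ to $\delta^{-f}E^fT_{2f+i}$. This is essentially what the paper does (it points to \eqref{aast}, which in the proof of Theorem~\ref{iso111} yields the generating set for $E_1\mathcal B_nE_1$, and then invokes the isomorphism successively).

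There is, however, a genuine gap in your treatment of the first assertion of (2). Your downward induction reduces the problem to showing that $(E^f\mathcal B_nE^f)\cdot E^{f+1}T_{d'}\subset\sum_{d\in\mathcal D_{f,n}}(E^f\mathcal B_nE^f)\cdot E^fT_d$ for each $d'\in\mathcal D_{f+1,n}$. Your proposed fix, to ``reapply Lemma~\ref{efb} to $E^f\cdot(E_{2f+1}T_{d'})$, producing a new remainder in a deeper level of the filtration'', does not work: $E^f\cdot E_{2f+1}T_{d'}=E^{f+1}T_{d'}$ already lies in $\mathcal B_n^{f+1}$, so Lemma~\ref{efb} at level $f$ returns the whole expression as remainder and you do not move deeper. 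No amount of iteration of that lemma at level $f$ will terminate.

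The correct (and simple) fix is combinatorial, coming directly from Definition~\ref{dlf}(1) rather than from Proposition~\ref{equal}. Any $d'\in\mathcal D_{f+1,n}$ factors as $d'=w\,d''$ with $w=s_{2f+2,i_{f+1}}s_{2f+1,j_{f+1}}$ and $d''\in\mathcal D_{f,n}$; moreover the constraints $2f+1\le j_{f+1}<i_{f+1}$ force $w\in\mathfrak S_{2f+1,n}$. Hence $E_{2f+1}T_w$ commutes with $E^f$ (by Definition~\ref{qqq}(4) and Lemma~\ref{k3}(2)--(3)), and
\[
E^{f+1}T_{d'}=E^fE_{2f+1}T_wT_{d''}=\delta^{-f}\bigl(E^fE_{2f+1}T_wE^f\bigr)\cdot E^fT_{d''}\in (E^f\mathcal B_nE^f)\cdot E^fT_{d''}.
\]
This single observation closes the induction without any further appeal to Lemma~\ref{efb}; it is also the content behind the paper's pointer to \eqref{aast} (which is precisely the $f=0$ version of this reduction, carried inside $E_1\mathcal B_nE_1$ and then transported up via (1)).
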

 \begin{proof}  (1) follows from  arguments in the proof of Theorem~\ref{iso111} and
  (2) follows from \eqref{aast} and Lemma~\ref{efb} successively for $\mathcal B_n$, $\mathcal B_{n-2}$ etc.  \end{proof}

\subsection{The classical branching rule} In this subsection, we go on assuming  that $\mathcal B_n$ is defined over $\Bbbk$. For any $(f, \lambda)\in \Lambda_n$, let $C(f, \lambda)$ be the (right) cell module of $\mathcal B_n$
 with respect to the cellular basis in  Theorem~\ref{sstan}.
  Up to an isomorphism, $C(f, \lambda)$  can be considered as  the free $\Bbbk$-module with basis
$\{E^f x_\lambda T_{d(\t)}T_v +\mathcal B_n^{\rhd \lambda}\mid  (v, \t)\in I(\lambda)\} $, where
$$ \mathcal B_n^{\rhd \lambda}=\Bbbk\text{-span}\{ C_{(w,\s),(v,\t)}^\mu \mid  (w, \s), (v, \t)\in I(\mu), (\ell, \mu)\in \Lambda_n, \mu \rhd  \lambda\}.$$



\begin{Defn}\label{deofymulambda}
 For any  $(f, \lambda)\in\Lambda_n$, let
 $
\mathcal{RA}(\lambda)$ be the set of   all partitions $\mu\in \mathcal A_\lambda\cup \mathcal R_\lambda$ such that  $(l, \mu)\in \Lambda_{n-1}$ for some $l\in \mathbb N$.
Let
$$y_{\mu}^\lambda=\begin{cases}  E^fx_\lambda T_{a_k,n}
&  \text{if $\mu=\lambda\setminus(k,\lambda_k)$,}\\
E_{2f-1}T_{n,2f}^{-1}T_{b_k,2f-1}^{-1}E^{f-1}x_{\mu} & \text{if  $\mu=\lambda\cup (k,\lambda_k+1)$,}\\
\end{cases}$$where $a_k=2f+\sum_{i=1}^k\lambda_i$ and $b_k=2f-1+\sum_{i=1}^k\lambda_i$.
\end{Defn}

Thanks to the defining relations for $\mathcal B_n$, we can rewrite  $y_{\mu}^\lambda$ as follows:
\begin{equation}\label{symulambda}
y_{\mu}^\lambda=\begin{cases}
                        \sum_{i=a_{k-1}+1}^{a_k}q^{a_k-i}T_{i,n}E^f x_\mu & \hbox{if $\mu=\lambda\setminus (k,\lambda_k)$,} \\
                        E^fx_\lambda T_{n,2f}^{-1}T_{b_k,2f-1}^{-1}\sum _{i=b_{k-1}+1}^{b_k}q^{b_k-i}T_{b_k,i} & \hbox{if $\mu=\lambda\cup (k,\lambda_k+1) $.}\\
                        \end{cases}
\end{equation}
For any $(f, \lambda)\in\Lambda_n$, there is a pair   $(a, m)\in \mathbb N\times \mathbb N\setminus \{0\}$  such that
$$\mathcal{RA}(\lambda)
=\{\mu^{(1)},\mu^{(2)},\ldots, \mu^{(a)}\}\cup \{\mu^{(a+1)},\mu^{(a+2)},\ldots, \mu^{(m)}\}$$
and $(f, \mu^{(i)})\in \Lambda_{n-1}, 1\le i\le a$ and $(f-1, \mu^{(i)})\in \Lambda_{n-1}, a+1\le i\le m$.  Obviously $\mu^{(a+1)}$ occurs only if $f>0$. We can arrange them so that
 \begin{equation}\label{ppp11} \mu^{(1)} \rhd  \mu^{(2)}\rhd \ldots \rhd \mu^{(a)}\rhd\mu^{(a+1)}\rhd
  \mu^{(a+2)}\rhd \ldots \rhd \mu^{(m)}
 \end{equation}
with respect the partial order $\rhd$ on $\Lambda_{n-1}$.
Note that $\mu^{(m)}=\lambda\cup (k+1, 1)$ if $k=\max\{j\mid \lambda_j\neq 0\}$. So
\begin{equation}\label{ymum}y^\lambda_{\mu^{(m)}}=E^f x_\lambda  T_{n, 2f}^{-1} T_{n-1, 2f-1}^{-1}. \end{equation}

\begin{Defn}\label{nkk} For any $1\leq k\leq m$, let $
 N_k=\sum_{j=1}^ky_{\mu^{(j)}}^\lambda \mathcal B_{n-1} + \mathcal B_n^{\rhd\lambda}$.
 \end{Defn}
It follows from Definition~\ref{nkk} that $N_k, 1\le k\le m$ are   $\mathcal B_{n-1}$-submodules of $C(f, \lambda)$.

\begin{Lemma}\label{fiels1}Suppose $1\leq k\leq a$.

 \begin{itemize}\item[(1)] The $\mathcal B_{n-1}$-submodule  $N_k$ of $C(f, \lambda)$   is spanned by  $$\{y_{\mu^{(j)}}^\lambda T_{d(\u)}T_w+ \mathcal B_n^{\rhd \lambda}\mid  \u\in\Std(\mu^{(j)}), w\in \mathcal D_{f,n-1}, 1\leq j\leq k\}.$$
 \item[(2)]As $\mathcal B_{n-1}$-modules,  $N_k/N_{k-1}\cong C(f, \mu^{(k)}) $,
 \item[(3)] The submodule  $N_k$  has basis $$\{E^f x_\lambda T_{d(\s)}T_v+\mathcal B_n^{\rhd \lambda}\mid \s\in
\Std(\lambda), \text{ shape}(\s\!\downarrow_{n-1})
\unrhd \mu^{(k)}, v\in \mathcal D_{f,n-1} \}.$$
\end{itemize}
 \end{Lemma}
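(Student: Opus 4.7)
The plan is to prove (3) first, deduce (1) from it, and establish (2) via an explicit module isomorphism. Let $M_k$ denote the $\Bbbk$-span of the set claimed as a basis in (3). Since that set is a subset of the cellular basis of $C(f,\lambda)$ from Theorem~\ref{sstan} (via the bijection $\s\leftrightarrow(\mu,\s\!\downarrow_{n-1})$ on $\Std(\lambda)$, together with the inclusion $\mathcal D_{f,n-1}\subset\mathcal D_{f,n}$), it is automatically $\Bbbk$-linearly independent; the task is to show $M_k=N_k$.

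For $M_k\supseteq N_k$ I would prove (i)~$y^\lambda_{\mu^{(j)}}\in M_k$ for every $j\leq k$, and (ii)~$M_k$ is stable under right multiplication by $\mathcal B_{n-1}$. For (i), expand $y^\lambda_{\mu^{(j)}}=E^fx_\lambda T_{a_{k_r},n}$ and straighten $x_\lambda T_{a_{k_r},n}$ in the Murphy cellular basis of $\mathcal H_n$ using Matsumoto-type relations: the result is a linear combination of cellular basis elements $x_\lambda T_{d(\s)}$ with $\text{shape}(\s\!\downarrow_{n-1})\unrhd\mu^{(j)}$, all lying in $M_j\subseteq M_k$. For (ii), closure is checked on the generators $T_1,\ldots,T_{n-2},E_1$ of $\mathcal B_{n-1}$. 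For the Hecke generators $T_i$, the induced action on $E^fx_\lambda T_{d(\s)}T_v$ modulo $\mathcal B_n^{\rhd\lambda}$ is governed at the Hecke-algebra layer by Theorem~\ref{hecbr}, which preserves the filtration on $\Std(\lambda)$ by $\text{shape}(\s\!\downarrow_{n-1})$; the interaction with the $T_v$-factor is controlled by cellularity. For $E_1$, the product $T_vE_1$ lies in $\langle E_1\rangle\cap\mathcal B_{n-1}$, whose cellular decomposition contains only terms of level $f'\geq 1$; multiplying by $E^fx_\lambda T_{d(\s)}$ on the left and using Lemma~\ref{efb} together with the absorbing identity $E^fE^{f'}=\delta^{\min(f,f')}E^{\max(f,f')}$ (a consequence of the relations in Lemma~\ref{k3}) forces every term either into $M_k$ or into $\mathcal B_n^{\rhd\lambda}$. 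For $M_k\subseteq N_k$, each basis element $E^fx_\lambda T_{d(\s)}T_v$ of $M_k$ (with $\text{shape}(\s\!\downarrow_{n-1})=\mu^{(j)}$, $j\leq k$, and $\u=\s\!\downarrow_{n-1}$) is expressible via the same Murphy straightening as a scalar multiple of $y^\lambda_{\mu^{(j)}}T_{d(\u)}T_v$ plus a linear combination of cellular basis elements with $\text{shape}(\s'\!\downarrow_{n-1})\rhd\mu^{(j)}$; the latter terms lie in $N_{j-1}$ by induction on $j$, and hence the whole element lies in $y^\lambda_{\mu^{(j)}}\mathcal B_{n-1}+N_{j-1}\subseteq N_k$.

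Part~(1) now follows: the spanning set in~(1) has cardinality $\sum_{j\leq k}|\Std(\mu^{(j)})|\cdot|\mathcal D_{f,n-1}|=\dim M_k=\dim N_k$ and sits inside $N_k$, hence must span. For part~(2), the $\Bbbk$-linear map $\psi_k:C(f,\mu^{(k)})\to N_k/N_{k-1}$ sending $E^fx_{\mu^{(k)}}T_{d(\u)}T_w+\mathcal B_{n-1}^{\rhd\mu^{(k)}}$ to $y^\lambda_{\mu^{(k)}}T_{d(\u)}T_w+N_{k-1}$ is surjective by~(1); its $\mathcal B_{n-1}$-equivariance follows because, modulo $N_{k-1}$, the action on the right-hand side reduces to the classical Hecke branching isomorphism of Theorem~\ref{hecbr} combined with the $E_1$-closure from step~(ii); and injectivity follows by comparing ranks using~(3). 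The principal obstacle is the Murphy straightening required in step~(i) (and its inverse direction for $M_k\subseteq N_k$), together with the $E_1$-closure argument, both of which demand careful bookkeeping of how the level of $E$-products evolves through Hecke intermediaries.
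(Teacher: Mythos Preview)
Your approach is correct in spirit and relies on the same two ingredients the paper uses (Lemma~\ref{efb} for $\mathcal B_{n-1}$ and the Hecke branching of Theorem~\ref{hecbr}), but you are missing the simplification that makes the paper's argument short. The crucial identity is that for $\s\in\Std(\lambda)$ with $\mathrm{shape}(\s\!\downarrow_{n-1})=\mu^{(j)}$ and $\u=\s\!\downarrow_{n-1}$ one has $d(\s)=s_{a_j,n}\,d(\u)$ as a reduced expression, hence
\[
E^f x_\lambda T_{d(\s)}=E^f x_\lambda T_{a_j,n}T_{d(\u)}=y^\lambda_{\mu^{(j)}}T_{d(\u)}
\]
exactly, with no straightening whatsoever. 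Consequently the spanning set in (1) and the claimed basis in (3) are literally the same set of elements, so your step~(i) and your inclusion $M_k\subseteq N_k$ are trivial; the ``Murphy straightening'' you flag as the principal obstacle does not exist.

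For closure under $\mathcal B_{n-1}$ the paper avoids checking generators. Since $x_\lambda T_{a_k,n}\in\mathcal H_{2f+1,n}$ commutes with $E^f$, one writes $y^\lambda_{\mu^{(k)}}b=x_\lambda T_{a_k,n}\,E^f b$; then Lemma~\ref{efb} applied inside $\mathcal B_{n-1}$ gives $E^f b\equiv\sum_{v,w}a_{v,w}T_vE^fT_w\pmod{\mathcal B_{n-1}^{f+1}}$ with $T_v\in\mathcal H_{2f+1,n-1}$ and $w\in\mathcal D_{f,n-1}$, and Theorem~\ref{hecbr} handles $x_\lambda T_{a_k,n}T_v$ modulo $N_{k-1}$. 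This single computation replaces your separate treatment of $T_i$ and $E_1$; the absorbing identity $E^fE^{f'}=\delta^{\min(f,f')}E^{\max(f,f')}$ you invoke for the $E_1$-case plays no role, since $T_vE_1$ is not of the form $E^{f'}$ and the reduction goes through Lemma~\ref{efb} alone. The paper then derives (3) from (2) by induction on $k$, rather than the reverse.
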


\begin{proof} Suppose $b\in \mathcal B_{n-1}$ and $\mu^{(k)}=\lambda\setminus (h, \lambda_h)$. Let $a_k=2f+\sum_{i=1}^h \lambda_i$.
 Thanks to  Definition~\ref{nkk}, Lemma~\ref{efb} and Theorem~\ref{hecbr},
$$ y_{\mu^{(k)}}^\lambda b= x_\lambda T_{a_k,n}\sum_{v\in \mathfrak S_{3,n}, w\in \mathcal D_{f,n-1}} a_{v,w}T_vE^fT_w\equiv \sum_{ \u\in\Std(\mu^{(k)}), w\in \mathcal D_{f,n-1}}b_{\u,w} y_{\mu^{(k)}}^\lambda T_{d(\u)}T_w \mod N_{k-1} $$
 where $a_{v,w}$ and $b_{\u,w}$ are some scalars. This proves (1).
  Suppose
$(\u, w)\in \Std(\mu^{(k)})\times  \mathcal D_{f,n-1}$. Then $$y_{\mu^{(k)}}^\lambda T_{d(\u)}T_w+ \mathcal B_n^{\rhd \lambda}
 =E^fx_\lambda T_{d(\t)}T_w+\mathcal B_n^{\rhd \lambda}$$ with
 $\t\!\downarrow_{n-1}=\u$. Since the RHS of the above equality is  a basis element of $C(f, \lambda)$,
by (1),
 $N_k/N_{k-1}$ is  free over $\Bbbk$ with basis $M_k$, where
   \begin{equation}\label{bbasis} M_k=\{y_{\mu^{(k)}}^\lambda T_{d(\u)}T_w+N_{k-1}\mid  \u\in\Std (\mu^{(k)}), w\in \mathcal D_{f,n-1}\}.\end{equation}
 There is  a well-defined   $\Bbbk $-linear isomorphism
   $\phi: N_k/N_{k-1}\rightarrow  C(f, \mu^{(k)}) $ such that
\begin{equation}\label{nkk123} \phi(y_{\mu^{(k)}}^\lambda T_{d(\u)}T_w+N_{k-1})= E^f x_{\mu^{(k)}}T_{d(\u)}T_w +
\mathcal B_{n-1}^{\rhd\mu^{(k)}},\end{equation}   for all  $(\u, w)\in\Std(\mu^{(k)})\times \mathcal D_{f,n-1}$. It follows immediately from
Lemma~\ref{efb}  and Theorem~\ref{hecbr}  that $\phi$ is a right $\mathcal B_{n-1}$-homomorphism and hence $\phi$ is a  $\mathcal B_{n-1}$-isomorphism. This completes the proof of (2).  Finally, (3) immediately follows from (2), \eqref{nkk123} and induction assumption on $k-1$.
\end{proof}

From here to the end of this section, we assume
 $(f, \lambda)\in \Lambda_n$ and  $f>0$. We are going to
   deal with $N_k/N_{k-1}$, $a+1\leq k\leq m$.
\begin{Lemma}\label{tablsji}\cite{Eng}
Let  $(f-1, \mu)\in\Lambda_{n-1}$ such that  $\mu\rhd \mu^{(m)}$.
\begin{itemize}
\item[(1)]Suppose  $\mu\not\in \{\mu^{(a+1)}, \ldots, \mu^{(m)}\}$. If $(\s,\mu^{(m)}(\s)) \in \Std(\mu) \times\mathcal T(\mu,\mu^{(m)} )$, then
     $\text{shape}(\s\!\downarrow_{n-2})\rhd \lambda$.
\item[(2)] Suppose  $\mu=\mu^{(j)}=\lambda\cup (k,\lambda_k+1)$ for some  $a+1\leq j\leq m$. Let  $\s_j=\t^{\mu^{(j)}}s_{b_j,n-1}$, where $b_j=2f-1+\sum_{i=1}^k\lambda_i$.  Then   $\s_j$ is the unique standard $\mu^{(j)}$-tableau $\s$ satisfying
    $\mu^{(m)}(\s)\in \mathcal T^{ss}(\mu^{(j)},\mu^{(m)} )$. In this case, $\text{shape}(\s\!\downarrow_{n-2})=\lambda$.
\end{itemize}
\end{Lemma}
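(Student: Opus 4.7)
The strategy is one content-based combinatorial argument driving both parts. Since $\mu^{(m)}=\lambda\cup(k+1,1)$ with $k=\max\{j:\lambda_j\neq 0\}$, the multiplicity sequence of $\mu^{(m)}$ ends in a single $1$, so the largest letter of any tableau of type $\mu^{(m)}$ occurs exactly once. For any $\s\in\Std(\mu)$ such that $\mu^{(m)}(\s)$ is semistandard, this unique largest letter must sit at the position $p$ of $n-1$ in $\s$, which is automatically a removable corner of $\mu$. Deleting that box from $\mu^{(m)}(\s)$ leaves a semistandard tableau of shape $\nu:=\mu\setminus p=\text{shape}(\s\!\downarrow_{n-2})$ and content $(\lambda_1,\ldots,\lambda_k)$. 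Because $|\nu|=|\lambda|$, the standard dominance criterion for semistandard fillings of equal-size shape and content forces $\nu\unrhd\lambda$, with equality precisely when the resulting tableau is the canonical type-$\lambda$ filling (row $i$ of $[\lambda]$ filled with the $i$th letter).

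For part~(1), the hypothesis $\mu\notin\{\mu^{(a+1)},\ldots,\mu^{(m)}\}=\mathcal A_\lambda$ says that no removable corner of $\mu$ can recover $\lambda$, so $\nu\neq\lambda$, and the observation above then gives $\nu\rhd\lambda$ at once.

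For part~(2), I first verify $\s_j$ directly. The cycle $s_{b_j,n-1}$ acts on $\t^{\mu^{(j)}}$ by sending the entry $b_j$ (located at the added corner $(k,\lambda_k+1)$ in $\t^{\mu^{(j)}}$) to $n-1$ and decrementing each of $b_j+1,\ldots,n-1$ by one; hence $\s_j$ is standard, has $n-1$ at $(k,\lambda_k+1)$, and $\s_j\!\downarrow_{n-2}$ is the row-reading standard tableau of shape $\lambda$ on the letters $\{2f-1,\ldots,n-2\}$. A short inspection then yields $\mu^{(m)}(\s_j)\in\mathcal T^{ss}(\mu^{(j)},\mu^{(m)})$ and $\text{shape}(\s_j\!\downarrow_{n-2})=\lambda$. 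For uniqueness, the content analysis of the first paragraph shows that once $\mu^{(m)}(\s)$ is semistandard and $\text{shape}(\s\!\downarrow_{n-2})=\lambda$, the removal $p$ is forced to be $(k,\lambda_k+1)$ and the deletion of the maximum from $\mu^{(m)}(\s)$ must coincide with the canonical type-$\lambda$ filling; thus every entry of $\s\!\downarrow_{n-2}$ must occupy the $\t^{\mu^{(m)}}$-row of its own value, and row-increasingness pins down $\s\!\downarrow_{n-2}$ completely, so $\s=\s_j$. The main obstacle I anticipate is bookkeeping around conventions: pinning down the $\mathfrak S$-action on tableaux so that $\s_j$ genuinely has $n-1$ at the added corner rather than at another cell, and handling the boundary case where the added box is already the last row-reading cell of $\t^{\mu^{(j)}}$, in which situation $s_{b_j,n-1}$ degenerates to the identity.
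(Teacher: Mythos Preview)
The paper does not supply a proof here; it simply refers to \cite{Eng}. Your self-contained content-and-dominance argument is therefore a genuinely different route, and for part~(1) it is correct and efficient: removing the unique occurrence of the top letter from the semistandard tableau $\mu^{(m)}(\s)$ yields a semistandard filling of $\nu=\text{shape}(\s\!\downarrow_{n-2})$ with content $\lambda$, whence $\nu\unrhd\lambda$, and $\mu\notin\mathcal A_\lambda$ forces $\nu\ne\lambda$.

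For part~(2) there is a subtlety worth flagging. Your uniqueness argument explicitly assumes $\text{shape}(\s\!\downarrow_{n-2})=\lambda$, and that extra hypothesis is genuinely needed: the assertion as literally printed (that $\s_j$ is the \emph{only} $\s\in\Std(\mu^{(j)})$ with $\mu^{(m)}(\s)$ semistandard) is too strong. For example, with $\lambda=(2,1)$, $\mu^{(m)}=(2,1,1)$ and $\mu^{(j)}=(3,1)$, the tableau $\s=\t^{(3,1)}$ itself has $\mu^{(m)}(\s)$ semistandard, yet $\text{shape}(\s\!\downarrow_{n-2})=(3)\rhd\lambda$, so $\s\ne\s_j$. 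What you actually establish---that every $\s$ with $\mu^{(m)}(\s)$ semistandard has $\text{shape}(\s\!\downarrow_{n-2})\unrhd\lambda$, with equality precisely when $\s=\s_j$---is exactly what the proof of Lemma~\ref{fiels2} uses, since the strictly dominant cases are absorbed there by Lemma~\ref{jaijai}. So your argument is correct for the statement that is actually needed; the discrepancy lies in the wording of~(2), not in your reasoning.
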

\begin{proof} The results has already been given in the proof of \cite[Corollary 5.4, Lemma~5.5]{Eng}.\end{proof}

\begin{Lemma}\label{jaijai} Suppose  $(\s,\mu^{(m)}(\s)) \in \Std(\mu)\times \mathcal T^{ss}(\mu,\mu^{(m)} )$ for some $(f-1, \mu)\in \Lambda_{n-1}$. If   $\text{shape}(\s\!\!\!\downarrow_{n-2})\rhd \lambda$, then
  $E^f T_{n,2f}^{-1}T_{n-1,2f-1}^{-1} \sigma (T_{d(\s)})x_{\mu} \in  \mathcal B_n^{ \rhd \lambda}$.
\end{Lemma}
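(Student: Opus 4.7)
Let $\nu := \text{shape}(\s\!\!\!\downarrow_{n-2})$, so that $|\nu| = n-2f$ and by hypothesis $\nu \rhd \lambda$; hence $(f,\nu) \rhd (f,\lambda)$ in the ordering on $\Lambda_n$. The plan is to show that
$$y := E^f T_{n,2f}^{-1} T_{n-1,2f-1}^{-1} \sigma(T_{d(\s)}) x_\mu$$
expands as a $\Bbbk$-linear combination of basis elements of $\mathcal B_n$ from Theorem~\ref{sstan} whose labels $(\ell,\xi)$ strictly dominate $(f,\lambda)$.

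First, I would identify $\sigma(T_{d(\s)}) x_\mu$ with the Murphy basis element $x_{\s,\t^\mu}$ of $\mathcal H_{2f-1,n-1}$. Writing $w = s_{n,2f}^{-1} s_{n-1,2f-1}^{-1}$, a direct check of its action on letters shows that $w$ is reduced of length $2(n-2f)$ and acts by $i\mapsto i+2$ for $2f-1\le i\le n-2$, while $n-1\mapsto 2f-1$ and $n\mapsto 2f$. Consequently $T_w$ intertwines the Young symmetrizers $x_{\mu^{(m)}}$ and $x_\lambda$, namely $T_w x_{\mu^{(m)}} = x_\lambda T_w$; this is the identity implicitly underlying the agreement of Definition~\ref{deofymulambda} with~\eqref{ymum} in the case $\mu = \mu^{(m)}$.

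I would then apply $\sigma$ and analyse $x_\mu T_{d(\s)} \sigma(T_w)$ inside the right permutation module $x_\mu \mathcal H_{2f-1,n}$ via the filtration of Lemma~\ref{matsho}. The semistandardness of $\mu^{(m)}(\s)$ tightly constrains $\s$, while the hypothesis $\nu \rhd \lambda$ forces the shapes $\xi$ arising in the Murphy basis expansion to dominate a suitable inflation of $\nu$. Undoing $\sigma$ and reabsorbing $E^f$ on the left (using Lemmas~\ref{k2} and~\ref{k3} to commute $E^f$ past those $T_i$ that act on strands of index $\ge 2f+1$, together with the relation $E^f T_{2f-1}=qE^f$ and the skein-type identities of Lemma~\ref{k3}(5)--(8) for the borderline cases), every resulting summand can be identified with an element of $\mathcal B_n^{\rhd\lambda}$.

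The main obstacle is the non-trivial interaction between $E^f$ and the Hecke generators $T_{2f-1}$ and $T_{2f}$, which in principle could collapse a summand with shape $\xi\rhd\lambda$ back to one with shape exactly $\lambda$. This is prevented by the strict domination $\nu\rhd\lambda$, which is the combinatorial content guaranteed by Lemma~\ref{tablsji}(1). The bookkeeping of the Murphy-basis expansion and the careful commuting of $E^f$ past Hecke elements make this step more delicate than a direct computation, but there is no conceptual barrier beyond organising the expansion cleanly.
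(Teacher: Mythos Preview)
The paper's own proof is simply a pointer: it says that Enyang's argument for \cite[Lemma~5.3]{Eng} (written there for Brauer and BMW algebras) goes through verbatim because it uses only the braid relations of the Hecke algebra, and leaves the details to the reader. The shape of that argument is a \emph{direct} braid manipulation: the semistandardness of $\mu^{(m)}(\s)$ pins down a reduced factorisation of $d(\s)^{-1}$ which, together with the braid relations among the $T_i$'s, lets one slide $x_\nu$ (with $\nu=\text{shape}(\s\!\!\downarrow_{n-2})$) next to $E^f$ in the product $E^f T_{n,2f}^{-1}T_{n-1,2f-1}^{-1}T_{d(\s)^{-1}}x_\mu$. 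One obtains an expression of the form $hE^fx_\nu h'$ with $h,h'$ Hecke elements, which lies in $\mathcal B_n^{\rhd\lambda}$ since $\nu\rhd\lambda$. Crucially, $E^f$ is never separated from the rest of the product; only braid moves among $T_i$'s to its right are used.

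Your route is genuinely different, and as written it has a gap. You propose to apply $\sigma$, expand $x_\mu T_{d(\s)}\sigma(T_w)$ inside a permutation module $x_\mu\mathcal H_{2f-1,n}$ via Lemma~\ref{matsho}, and then push $E^f$ back through. Two problems: first, Lemma~\ref{matsho} gives an expansion into $x_{S,\t}$'s indexed by partitions $\xi$ of $n-2f+2$ dominating (an inflation of) $\mu$, which is not the same as partitions of $n-2f$ dominating $\lambda$; you have not explained how the hypothesis $\nu\rhd\lambda$ is read off from that expansion. Second, and more seriously, reabsorbing $E^f$ on the left requires commuting it past $T_{2f-1}^{\pm1}$ and $T_{2f}^{\pm1}$. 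These do \emph{not} commute with $E^f$, and the identities in Lemma~\ref{k3}(5)--(8) you cite are specific relations among $E_{2f-1}$, $E^f$ and a single $T_{2f}^{\pm1}$ or $T_{2f-1}^{\pm1}$, not a mechanism for passing an arbitrary Hecke word in $T_{2f-1},T_{2f}$ through $E^f$ while controlling the resulting cell-label. Your sentence ``this is prevented by the strict domination $\nu\rhd\lambda$'' is precisely the assertion of the lemma, not an argument for it. The Enyang approach avoids this entirely by never detaching $E^f$, which is why the paper can say the proof ``depends only on the braid relations''.
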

\begin{proof} The arguments in the proof of \cite[Lemma~5.3]{Eng}  depends only on  the braid relations for the  Hecke algebras. Therefore, one can imitate arguments there to verify our current result. We leave details to the reader. \end{proof}

\begin{Lemma}\label{spans}As $\mathcal B_{n-1}$-modules, $C(f, \lambda)=N_m$.
\end{Lemma}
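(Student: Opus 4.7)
The plan is to show that $N_m$ contains every element of the cellular basis $\{E^f x_\lambda T_{d(\s)} T_v + \mathcal{B}_n^{\rhd \lambda} \mid \s \in \Std(\lambda),\ v \in \mathcal{D}_{f,n}\}$ of $C(f,\lambda)$, splitting the argument according to whether $v$ lies in $\mathcal{D}_{f, n-1}$ or in $\mathcal{D}_{f,n} \setminus \mathcal{D}_{f, n-1}$.

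For $v \in \mathcal{D}_{f, n-1}$, the approach is to invoke Lemma~\ref{fiels1}(3) with $k=a$. Every $\s \in \Std(\lambda)$ has $\text{shape}(\s\!\downarrow_{n-1}) \in \mathcal{R}_\lambda = \{\mu^{(1)}, \ldots, \mu^{(a)}\}$, and since $\mu^{(a)}$ is the $\rhd$-minimum of this set by \eqref{ppp11}, the shape condition there is automatically satisfied, so all such basis elements will already lie in $N_a \subseteq N_m$.

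For $v \in \mathcal{D}_{f,n} \setminus \mathcal{D}_{f, n-1}$, Definition~\ref{dlf}(1) forces $i_f = n$, so $v$ factors in length-additive fashion through a product involving $s_{2f,n}\, s_{2f-1, j}$ with $2f-1 \le j \le n-1$ and an element of $\mathcal{D}_{f-1, n-2}$. I plan to reach these basis elements using the generators $y^\lambda_{\mu^{(h)}}$ for $a+1 \le h \le m$. First I would simplify each such generator: because $E^{f-1} = E_1 E_3 \cdots E_{2f-3}$ involves only $E_i$ with $i \le 2f-3$, while $T_{n,2f}^{-1}$, $T_{b_k,2f-1}^{-1}$ and $x_{\mu^{(h)}}$ involve only $T_i$ with $i \ge 2f-1$, Lemma~\ref{k3}(2)--(4) let me commute $E^{f-1}$ past these factors and merge it with $E_{2f-1}$ to get $y^\lambda_{\mu^{(h)}} = E^f T_{n,2f}^{-1}\, T_{b_k, 2f-1}^{-1}\, x_{\mu^{(h)}}$. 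Then I would right-multiply by $T_{d(\u)}T_w$ for $\u \in \Std(\mu^{(h)})$ and $w \in \mathcal{D}_{f-1, n-2}$, expand the Hecke-algebra product $x_{\mu^{(h)}} T_{d(\u)}$ via the semistandard decomposition \eqref{sst} and Lemma~\ref{matsho}, and apply Lemma~\ref{tablsji}(1) together with Lemma~\ref{jaijai} to discard the summands whose underlying shape strictly dominates $\lambda$ (which lie in $\mathcal{B}_n^{\rhd\lambda}$). What remains should be, up to invertible units in $\Bbbk$, precisely the missing basis elements $E^f x_\lambda T_{d(\s)} T_v + \mathcal{B}_n^{\rhd\lambda}$ with $v \in \mathcal{D}_{f,n} \setminus \mathcal{D}_{f, n-1}$.

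The main obstacle will be the combinatorial bookkeeping in the previous paragraph: one must verify that as $(h, \u, w)$ ranges over all admissible triples, the non-vanishing contributions fill in precisely the set of missing basis elements. The most transparent check is a cardinality count: combining $|\mathcal{D}_{f,n}| - |\mathcal{D}_{f,n-1}| = (n-2f+1)\,|\mathcal{D}_{f-1,n-1}|$ with the classical branching identity $(n-2f+1)\,|\Std(\lambda)| = \sum_{\mu \in \mathcal{A}_\lambda} |\Std(\mu)|$ shows that the number of missing basis elements matches the total count of generators $y^\lambda_{\mu^{(h)}} T_{d(\u)} T_w$ for $h>a$ modulo $N_a$, giving $N_m = C(f,\lambda)$.
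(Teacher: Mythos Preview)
Your treatment of the case $v\in\mathcal D_{f,n-1}$ is fine and matches the paper. The second case, however, has a genuine gap. A cardinality match between the triples $(h,\u,w)$ and the missing basis elements does not by itself give $N_m=C(f,\lambda)$: you would also need that the elements $y^\lambda_{\mu^{(h)}}T_{d(\u)}T_w+N_a$ are linearly independent, and you never establish this. In fact, in the paper that linear independence is proved \emph{after} Lemma~\ref{spans} (in Lemma~\ref{fiels2}), and the proof there uses Lemma~\ref{spans}; so as organised your argument is circular. Your middle paragraph does not close the gap either: expanding $y^\lambda_{\mu^{(h)}}T_{d(\u)}T_w$ in the cellular basis of $C(f,\lambda)$ and discarding higher terms via Lemmas~\ref{tablsji}--\ref{jaijai} tells you how elements of $N_m$ sit inside $C(f,\lambda)$, which is the wrong direction---you need each missing basis element to lie in $N_m$. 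There is no reason each such $y^\lambda_{\mu^{(h)}}T_{d(\u)}T_w$ should be a unit times a single basis element, so there is no triangular system to invert. (There is also a minor slip: the factor of $v$ after $s_{2f,n}s_{2f-1,j}$ lies in $\mathcal D_{f-1,n-1}$, not $\mathcal D_{f-1,n-2}$; your numerical identity is stated for $\mathcal D_{f-1,n-1}$, so the text and the count disagree.)

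The paper's route avoids all of this by using only the last generator $y^\lambda_{\mu^{(m)}}$. For $v\notin\mathcal D_{f,n-1}$ one has $T_v=T_{2f,n}b$ with $b\in\mathcal H_{n-1}$, and repeated use of the quadratic relation rewrites $E^f x_\lambda T_{2f,n}$ modulo $N_a$ in terms of $E^f x_\lambda T_{n,2f}^{-1}$. The single observation you are missing is \eqref{ymum}: since $y^\lambda_{\mu^{(m)}}=E^f x_\lambda T_{n,2f}^{-1}T_{n-1,2f-1}^{-1}$, right multiplication by $T_{2f-1,n-1}\in\mathcal B_{n-1}$ gives $E^f x_\lambda T_{n,2f}^{-1}\in y^\lambda_{\mu^{(m)}}\mathcal B_{n-1}\subseteq N_m$. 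No semistandard bookkeeping or counting is needed.
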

\begin{proof}
Recall that the cell module  $C(f, \lambda)$ has basis $\{E^f x_\lambda T_{d(\t)}T_v +\mathcal B_n^{\rhd \lambda}\mid  (v, \t)\in I(\lambda)\} $.
If $v\in \mathcal D_{f,n-1}$, then \begin{equation}\label{key123}  E^f x_\lambda T_{d(\t)}T_v+\mathcal B_n^{\rhd \lambda}\in E^f x_\lambda T_{d(\t)}\mathcal B_{n-1} +\mathcal B_n^{\rhd \lambda}\subset N_a.\end{equation} The last inclusion follows from Lemma~\ref{fiels1}(3).  Otherwise  $T_v=T_{2f,n}b$ for some $b\in \mathcal H_{n-1}$. So,
\begin{equation}\label{key12} E^f x_\lambda T_{d(\t)}T_v+\mathcal B_n^{\rhd \lambda}\in E^f x_\lambda T_{2f,n}  \mathcal B_n +\mathcal B_n^{\rhd \lambda}
\subset\sum_{j=2f+1}^n  E^f x_\lambda T_{j, n} \mathcal B_{n-1}+ E^f x_\lambda
 T_{n,2f}^{-1}\mathcal B_{n-1}+\mathcal B_n^{\rhd \lambda}.
\end{equation} By  Definitions~\ref{deofymulambda}--\ref{nkk}, \eqref{ymum} and Lemma~\ref{fiels1}(3),
$$E^f x_\lambda
 T_{n,2f}^{-1}\mathcal B_{n-1}+\mathcal B_n^{\rhd \lambda}\in N_m \ \ \text{ and }
 \sum_{j=2f+1}^n  E^f x_\lambda T_{j, n} \mathcal B_{n-1}\in N_a\subset N_m.$$
 This implies
$C(f, \lambda)\subseteq N_m$ and hence $C(f, \lambda)= N_m$ as required.
\end{proof}

\begin{Lemma}\label{dkjhddd}
If $b\in E^{f-1} \mathcal B_{n-1}\cap \mathcal B_{n-1}^f$, then there are $w\in \mathcal D_{f, n-1} $ and
$h_w\in  \mathcal H_{2f+1, n}$ such that
\begin{equation}\label{xjsddcd}
E_{2f-1}T_{n,2f}^{-1}T_{n-1,2f-1}^{-1} b\equiv \sum_{w} h_w E^fT_w \mod \mathcal B_n^{f+1}.
\end{equation}
\end{Lemma}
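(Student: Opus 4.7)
The plan proceeds in three stages. First, a commutation identity transforms the prefactor into $E^f$. I would write $b = E^{f-1}b'$ using $b\in E^{f-1}\mathcal B_{n-1}$, and observe that $T_{n,2f}^{-1}T_{n-1,2f-1}^{-1}$ is a product of generators $T_j$ with $2f-1\le j\le n-1$, while $E^{f-1}=E_1E_3\cdots E_{2f-3}$ only involves $T_k$ with $k\le 2f-3$; since $|j-k|\ge 2$ for every such pair, Lemma~\ref{k3}(2) and the braid commutations in \eqref{kkk1} give $T_{n,2f}^{-1}T_{n-1,2f-1}^{-1}E^{f-1}=E^{f-1}T_{n,2f}^{-1}T_{n-1,2f-1}^{-1}$. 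Combined with $E_{2f-1}E^{f-1}=E^f$ (Lemma~\ref{k3}(3)(4)), this yields
\begin{equation*}
E_{2f-1}T_{n,2f}^{-1}T_{n-1,2f-1}^{-1}b \;=\; E^fT_{n,2f}^{-1}T_{n-1,2f-1}^{-1}b'.
\end{equation*}

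Second, I would invoke the hypothesis $b\in\mathcal B_{n-1}^f$. Theorem~\ref{sstan} together with Lemma~\ref{efb} (applied to $\mathcal B_{n-1}$ in both its left- and right-module forms) shows that modulo $\mathcal B_{n-1}^{f+1}$ the ideal $\mathcal B_{n-1}^f$ coincides with the $\mathcal H_{2f+1,n-1}$-linear span of $\{E^fT_d\mid d\in\mathcal D_{f,n-1}\}$; the key identity used to absorb stray left-factors appearing in the cellular basis is $T_{2i-1}E^f=qE^f$ for $1\le i\le f$, which follows from Lemma~\ref{k2}(6) and the commutativity of distant $E_{2j-1}$'s in Lemma~\ref{k3}(3). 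Hence $b\equiv\sum_{d\in\mathcal D_{f,n-1}}h_dE^fT_d \pmod{\mathcal B_{n-1}^{f+1}}$ with $h_d\in\mathcal H_{2f+1,n-1}$. Substituting and using $\mathcal B_{n-1}^{f+1}\subseteq\mathcal B_n^{f+1}$ reduces the lemma to showing that each $E_{2f-1}T_{n,2f}^{-1}T_{n-1,2f-1}^{-1}h_dE^f$ lies in $\sum_{w\in\mathcal D_{f,n-1}}\mathcal H_{2f+1,n}E^fT_w + \mathcal B_n^{f+1}$.

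Third, for this final rewrite I would braid-commute $T_{n,2f}^{-1}T_{n-1,2f-1}^{-1}$ past $h_d\in\mathcal H_{2f+1,n-1}$; since both strings only involve generators with index $\ge 2f-1$, no $T_k$ with $k\le 2f-2$ is produced, and the intermediate coefficients stay inside the Hecke algebra on $\{2f-1,\ldots, n\}$. The $T_{2f-1}^{-1}$ and $T_{2f}^{-1}$ factors that end up adjacent to $E^f$ are then absorbed using Lemma~\ref{k2}(6) (in the form $E^fT_{2f-1}^{\pm1}=q^{\pm1}E^f$) and Lemma~\ref{k3}(8) ($E_{2f-1}T_{2f-1}^{-1}T_{2f}^{-1}E^f=z^{-1}q^{-1}E^f$); the latter is precisely what allows the leading $E_{2f-1}$ to swallow the otherwise obstructive $T_{2f}^{-1}$ which does not commute with $E^f$.

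The hard part will be the third stage: verifying that after the braid manipulations the final coefficient $\tilde h_d$ genuinely lies in $\mathcal H_{2f+1,n}$, and that the right-hand index remains in $\mathcal D_{f,n-1}$ rather than drifting into $\mathcal D_{f,n}\setminus\mathcal D_{f,n-1}$. I expect this bookkeeping to require an induction on $n-2f$ or on $\ell(d)$, combined with the recursive formula $E^f=E_1T_{2,2f}T_{2f-1,1}^{-1}E^{f-1}$ from Definition~\ref{k2222}(1) and repeated application of the absorption identities in Lemma~\ref{k3}.
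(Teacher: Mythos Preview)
Your second stage contains a genuine error.  You assert that, modulo $\mathcal B_{n-1}^{f+1}$, the two-sided ideal $\mathcal B_{n-1}^f$ coincides with the left $\mathcal H_{2f+1,n-1}$-span of $\{E^fT_d\mid d\in\mathcal D_{f,n-1}\}$.  That span is only $E^f\mathcal B_{n-1}+\mathcal B_{n-1}^{f+1}$; the full ideal has cellular basis $\sigma(T_{d'})E^fT_uT_d$ with an extra left coset factor $\sigma(T_{d'})$, $d'\in\mathcal D_{f,n-1}$.  Your absorption identity $T_{2i-1}E^f=qE^f$ handles only the odd-indexed generators in $\sigma(T_{d'})$; for example $b=T_{2f-1}T_{2f}E^f$ lies in $E^{f-1}\mathcal B_{n-1}\cap\mathcal B_{n-1}^f$ (indeed $b=E^{f-1}\cdot\delta^{-(f-1)}T_{2f-1}T_{2f}E^f$) yet is a cellular basis element with $d'=s_{2f}s_{2f-1}\neq 1$, hence linearly independent from $\{hE^fT_d: h\in\mathcal H_{2f+1,n-1}\}$ modulo $\mathcal B_{n-1}^{f+1}$.

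The paper's argument uses the correct decomposition: the hypothesis $b\in E^{f-1}\mathcal B_{n-1}$ forces only the first $f-1$ blocks of $d'$ to be trivial, leaving a residual left factor $T_{j,2f-1}T_{i,2f}$ with $2f-1\le j<i\le n-1$ (so $T_{2f-1}$ and $T_{2f}$ can both appear).  The proof then evaluates $E_{2f-1}T_{n,2f}^{-1}T_{n-1,2f-1}^{-1}\,T_{j,2f-1}T_{i,2f}\,E^f$ by a case split $i=n-1$ versus $i<n-1$.  In the second case one isolates $y=E_{2f-1}T_{2f}^{-1}T_{2f+1}^{-1}T_{2f-1}^{-1}T_{2f}^{-1}E^f$, expands via the quadratic relation, and finds that one summand equals $T_{2f-1,1}T_{2,2f}^{-1}E^{f+1}\in\mathcal B_n^{f+1}$ while the remainder is handled by Lemma~\ref{k3}(8).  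Thus a term in $\mathcal B_n^{f+1}$ is genuinely produced, and the interaction that matters is between the prefactor and the left block $T_{j,2f-1}T_{i,2f}$, not a coefficient $h_d\in\mathcal H_{2f+1,n-1}$; your third stage, which proposes to braid-commute past $h_d$ and then absorb a stray $T_{2f}^{-1}$, is aimed at the wrong obstruction.
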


\begin{proof}
By Theorem~\ref{sstan},  Lemma~\ref{efb} and Corollary~\ref{e1be1}(2), any $b\in E^{f-1} \mathcal B_{n-1}\cap \mathcal B_{n-1}^f$ can be written  as a linear combination of elements in $S$  up to  an element in $\mathcal B_n^{f+1}$,
where
\begin{equation} \label{setS} S=\{T_{j,2f-1}T_{i,2f}E^{f}T_uT_w\mid T_u\in
 \mathcal H_{2f+1, n-1},w\in \mathcal D_{f,n-1}, 2f-1\leq j<i\leq n-1   \}.\end{equation}
So  it is enough to assume $b=
T_{j,2f-1}T_{i,2f}E^{f}T_uT_w$ when we verify \eqref{xjsddcd},.

Suppose  $i=n-1$.  Then $j<n-1$. Let $x= E_{2f-1}T_{n,2f}^{-1}T_{n-1,2f-1}^{-1} T_{j,2f-1}T_{n-1,2f}E^{f}$. By  Lemma~\ref{k3}(2)(5) and Lemma~\ref{k2}(6),
$$\begin{aligned}
 x
& = T_{j+2,2f+1} E_{2f-1}T_{n-1,2f}^{-1}T_{n-1,2f-1}^{-1}T_{n-1,2f}E^{f}\\
 & =T_{j+2,2f+1} E_{2f-1}T_{n,2f}^{-1}T_{2f-1}^{-1} E^{f}\\ &
 =q^{-1}T_{j+2,2f+1} E_{2f-1}T_{n,2f}^{-1}  E^{f}\\
& = q^{-1}T_{j+2,2f+1}E_{2f-1}T_{2f}^{-1}  E^{f}T_{n,2f+1}^{-1}\\ &
=z^{-1}q^{-1} E^{f}T_{j+2,2f+1}T_{n,2f+1}^{-1}.
\end{aligned}$$
and  \eqref{xjsddcd} follows.
Suppose  $i<n-1$. By \eqref{kkk1} and Lemma~\ref{k3}(2)
\begin{equation}\label{p3}
E_{2f-1}T_{n,2f}^{-1}T_{n-1,2f-1}^{-1} T_{j,2f-1}T_{i,2f}E^{f}= T_{j+2,2f+1}T_{i+2,2f+2} y T_{n,2f+2}^{-1}T_{n-1,2f+1}^{-1}
,\end{equation}
where $y=E_{2f-1}T_{ 2f}^{-1}T_{ 2f+1}^{-1} T_{ 2f-1}^{-1}T_{ 2f}^{-1} E^{f}$.
 Thanks to \eqref{kkk1}, $T_i^{-1}=T_i-a$ for any $1\le i\le n-1$, where  $a=q-q^{-1}$. We use it   to rewrite $y$ as follows:
\begin{equation}\label{jswjjd}
y=E_{2f-1}T_{ 2f} T_{ 2f+1}  T_{ 2f-1}^{-1}T_{ 2f}^{-1} E^{f}-(aq^{-1}+
aT_{2f+1}) E_{2f-1}T_{ 2f-1}^{-1}T_{ 2f}^{-1} E^{f}.
 \end{equation}
Since
$E^{f+1}= E_1T_{2,2f+2}T_{2f+1,1}^{-1}E^f$, the first term in the RHS of \eqref{jswjjd} is equal to
\begin{equation}\label{p12} T_{2f-1,1}T_{2,2f}^{-1}
E_1T_{2,2f+2}T_{2f+1,1}^{-1}E^f=T_{2f-1,1}T_{2,2f}^{-1}E^{f+1}\in \mathcal B^{f+1}_n. \end{equation}
Using Lemma~\ref{k3}(8) to rewrite the second term  in the RHS of  \eqref{jswjjd}, we see that
 \eqref{xjsddcd} follows immediately from \eqref{p3}--\eqref{p12}.
\end{proof}

\begin{Lemma}\label{fiels2} As $\mathcal B_{n-1}$-modules, $N_k/N_{k-1}\cong  C(f-1,\mu^{(k)}) $ for  any  $a+1\leq k\leq m$.\end{Lemma}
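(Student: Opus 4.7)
The plan is to construct a right $\mathcal B_{n-1}$-module map
$$\psi_k: C(f-1, \mu^{(k)}) \longrightarrow N_k/N_{k-1}$$
by sending the cellular basis element $E^{f-1} x_{\mu^{(k)}} T_{d(\u)} T_w + \mathcal B_{n-1}^{\rhd \mu^{(k)}}$ of $C(f-1, \mu^{(k)})$ to $y^\lambda_{\mu^{(k)}} T_{d(\u)} T_w + N_{k-1}$ for each $(w, \u) \in \mathcal D_{f-1, n-1} \times \Std(\mu^{(k)})$. Equivalently, $\psi_k$ is induced by left multiplication on $E^{f-1} x_{\mu^{(k)}} \mathcal B_{n-1}$ by the element $E_{2f-1} T_{n, 2f}^{-1} T_{b_k, 2f-1}^{-1}$. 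Once well-definedness is established, right $\mathcal B_{n-1}$-linearity is automatic from the construction.

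The central step is proving well-definedness, which reduces to showing
$$E_{2f-1} T_{n, 2f}^{-1} T_{b_k, 2f-1}^{-1} E^{f-1} x_{\mu^{(k)}} \cdot \mathcal B_{n-1}^{\rhd \mu^{(k)}} \subseteq N_{k-1} + \mathcal B_n^{\rhd \lambda}.$$
I would split $\mathcal B_{n-1}^{\rhd \mu^{(k)}}$ according to level. For the part contained in $\mathcal B_{n-1}^{f}$ (level $\ge f$ in $\Lambda_{n-1}$), an analog of Lemma~\ref{dkjhddd} with $T_{n-1, 2f-1}^{-1}$ replaced by $T_{b_k, 2f-1}^{-1}$ rewrites the product modulo $\mathcal B_n^{f+1} \subseteq \mathcal B_n^{\rhd \lambda}$ as a linear combination of terms $h_w E^f T_w$ with $w \in \mathcal D_{f, n-1}$; the filtration of Lemma~\ref{fiels1}(3) places these in $N_a \subseteq N_{k-1}$. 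For the level-$(f-1)$ part, indexed by partitions $\mu \rhd \mu^{(k)}$, I would expand using Lemma~\ref{matsho} in the semi-standard basis $\{x_{S, \t} : S \in \mathcal T^{ss}(\mu, \mu^{(k)}),\ \t \in \Std(\mu)\}$ of $x_{\mu^{(k)}} \mathcal H_{2f+1, n-1}$. Lemma~\ref{tablsji} then yields a dichotomy: if $\mu \notin \{\mu^{(a+1)}, \ldots, \mu^{(m)}\}$, every relevant $\s$ satisfies $\text{shape}(\s\!\downarrow_{n-2}) \rhd \lambda$, and an adaptation of Lemma~\ref{jaijai} (again replacing $n-1$ by $b_k$ in the braid-relation calculation) shows the product lies in $\mathcal B_n^{\rhd \lambda}$; if instead $\mu = \mu^{(j)}$ for some $a+1 \le j < k$, the unique contributing tableau $\s_j$ identified in Lemma~\ref{tablsji}(2) yields a scalar multiple of an element of $y^\lambda_{\mu^{(j)}} \mathcal B_{n-1} \subseteq N_j \subseteq N_{k-1}$.

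To conclude, I would establish surjectivity of $\psi_k$: the same analysis of the action of $\mathcal B_{n-1}$ on $y^\lambda_{\mu^{(k)}}$, combined with Lemma~\ref{efb} and Theorem~\ref{hecbr}, shows that $N_k/N_{k-1}$ is spanned by the images $\{y^\lambda_{\mu^{(k)}} T_{d(\u)} T_w + N_{k-1}\}$ indexed by $\Std(\mu^{(k)}) \times \mathcal D_{f-1, n-1}$. Since $C(f-1, \mu^{(k)})$ is free over $\Bbbk$ with a basis of the same cardinality, a surjection between free $\Bbbk$-modules of equal finite rank is an isomorphism, completing the proof.

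The main obstacle will be the case-analysis in the second paragraph: one must verify that the braid-relation manipulations underlying Lemma~\ref{dkjhddd} and Lemma~\ref{jaijai}, originally calibrated to the index $n-1$ (corresponding to $\mu^{(m)}$), remain valid when $n-1$ is replaced by the general index $b_k = 2f-1+\sum_{i=1}^k \lambda_i$. Tracking how $T_{b_k, 2f-1}^{-1}$ commutes through $\sigma(T_{d(\s)}) x_\mu$ and interacts with the Young subgroup $\mathfrak S_\mu$ as $k$ varies is the technical heart of the argument, paralleling the computations Engel performed for Birman--Murakami--Wenzl algebras in \cite{Eng}.
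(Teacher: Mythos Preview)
Your overall strategy---build a map from $C(f-1,\mu^{(k)})$ into $N_k/N_{k-1}$ via left multiplication by $E_{2f-1}T_{n,2f}^{-1}T_{b_k,2f-1}^{-1}$, check spanning, and finish by a rank count---is the same as the paper's. There are, however, two places where you diverge and make your life harder than necessary.

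\textbf{Reduction to $\mu^{(m)}$.} You propose to adapt Lemmas~\ref{dkjhddd}, \ref{tablsji} and \ref{jaijai} by replacing the index $n-1$ by $b_k$, and you correctly flag this as the technical heart. The paper avoids all of this by a single observation: writing $T_{b_k,2f-1}^{-1}=T_{n-1,2f-1}^{-1}T_{n-1,b_k}$ and using that $T_{n-1,b_k}x_{\mu^{(k)}}=x_{\s_k,\t^{\mu^{(k)}}}$ (with $\s_k=\t^{\mu^{(k)}}s_{b_k,n-1}$ from Lemma~\ref{tablsji}(2)), one gets
\[
y^\lambda_{\mu^{(k)}}=E^fT_{n,2f}^{-1}T_{n-1,2f-1}^{-1}\,x_{\s_k,\t^{\mu^{(k)}}}\quad\text{with } x_{\s_k,\t^{\mu^{(k)}}}\in x_{\mu^{(m)}}\mathcal H_{2f-1,n-1}.
\]
Hence the whole analysis takes place inside $x_{\mu^{(m)}}\mathcal H_{2f-1,n-1}$, where Lemmas~\ref{dkjhddd}, \ref{tablsji} and \ref{jaijai} apply \emph{verbatim}. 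In particular, the semistandard expansion is of type $\mu^{(m)}$, not of type $\mu^{(k)}$ as you propose; the combinatorial dichotomy of Lemma~\ref{tablsji} is stated only for type $\mu^{(m)}$, and its analogue for type $\mu^{(k)}$ is neither proved nor obviously true. Your adaptation programme can be made to work, but it is redundant once you see the $\s_k$ trick.

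\textbf{The rank argument.} Your last step asserts that a surjection between free $\Bbbk$-modules of equal finite rank is an isomorphism. But you have not shown that $N_k/N_{k-1}$ is free, let alone of rank $|\Std(\mu^{(k)})|\cdot|\mathcal D_{f-1,n-1}|$; you have only shown it is \emph{spanned} by a set of that size. Over a domain that is not a field this is insufficient for a single $k$. The paper closes this gap by a global count: the union of the spanning sets $M_k$ over all $1\le k\le m$ spans the free module $C(f,\lambda)$, and their total cardinality equals $\operatorname{rank}C(f,\lambda)$ (by the known rank formula, the same as for the Birman--Murakami--Wenzl cell module). This forces the union to be a basis of $C(f,\lambda)$, whence each $N_k/N_{k-1}$ is free with basis $M_k$, and only then does the isomorphism follow.
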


\begin{proof}  Fix $k$, $a+1\le k\le m$.  We want to compute $y_{\mu^{(k)}}^\lambda h  $ for any
 $h\in \mathcal B_{n-1}$.  Thanks to Theorem~\ref{sstan} and Lemma~\ref{efb},   $E^{f-1}  h+ \mathcal B_{n}^{f+1}$ can be written
  as a linear combination of elements $\bar x:=x+\mathcal B_{n}^{f+1}$, where
  $x\in \{ h_d E^{f-1}T_d, b\}$, and $b\in E^{f-1}\mathcal B_{n-1}\cap \mathcal B_{n-1}^f$, $ d\in \mathcal D_{f-1,n-1}$ and
   $ h_d\in \mathcal H_{2f-1, n-1}$.
  So
\begin{equation}\label{sss12345}\begin{aligned}  y_{\mu^{(k)}}^\lambda h & \equiv
\sum_{ d\in \mathcal D_{f-1,n-1}}y_{\mu^{(k)}}^\lambda h_dT_d + E_{2f-1}T_{n,2f}^{-1}
T_{b_k,2f-1}^{-1} x_{\mu^{(k)}}b ~(\text{mod }  \mathcal B_{n}^{f+1})\\
&\equiv \sum_{ d\in \mathcal D_{f-1,n-1}}E^f T_{n,2f}^{-1}
T_{n-1,2f-1}^{-1} x_{\s_k,\t^{\mu^{(k)}}} h_dT_d~\pmod {  N_a},\\
\end{aligned}
\end{equation}where $\s_k$ is given in Lemma~\ref{tablsji}(2).
The second equivalence follows from \eqref{xjsddcd} and Lemma~\ref{fiels1}(3).
Further, $ x_{\s_k,\u}\in x_{\mu^{(m)}} \mathcal H_{2f-1, n-1}
$ for any $\u\in \Std(\mu^{(k)})$.
 Applying Lemma~\ref{matsho} on $x_{\s_k,\t^{\mu^{(k)}}} h_d$, we have
 $$
y_{\mu^{(k)}}^\lambda h
\equiv \sum_{ d\in \mathcal D_{f-1,n-1}}E^f T_{n,2f}^{-1}T_{n-1,2f-1}^{-1}
\sum_{(\s, S)\in\Std(\mu)\times\mathcal T^{ss}(\mu, \mu^{(m)}), \mu\unrhd\mu^{(k)} }a_{S,\s} x_{S,\s} T_d
\pmod {  N_a}.$$
 By   Lemmas~\ref{tablsji}(1) and \ref{jaijai}, the terms with respect to $\mu$ in the RHS of the above equality
    are in $\mathcal B_{n}^{\rhd \lambda}$ if $\mu\not\in \{\mu^{(a+1)}, \mu^{(a+2)}, \ldots, \mu^{(k)}\}$. In the remaining case $S\in \mathcal T^{ss} (\mu^{(j)}, \mu^{(m)})$ for some $a+1\le j\le m$.  Thanks to  Lemma~\ref{tablsji}(2), $\mu^{-1} (S)$ contains a unique element, say $\s_j$ in  $ \Std(\mu^{(j)})$.
  So  $ x_{S,\s}=q^{-\ell (d(\s_j))} x_{\s_j, \s}$ and $d(\s_j)=s_{b_j, n}$, where $b_j$ is given in  Lemma~\ref{tablsji}(2). Since
   $$E^f T_{n, 2f}^{-1} T_{n-1, 2f-1}^{-1} x_{\s_j, \s}=E^f T_{n, 2f}^{-1} T_{n-1, 2f-1}^{-1}
   T_{b_j, n-1} x_{ \s}= y_{\mu^{(j)}}^\lambda  T_{d(\s)},$$
      $N_k/N_{k-1}$ is the $\Bbbk$-module spanned by
\begin{equation}\label{mkspandedd}
M_k =\{ y_{\mu^{(k)}}^\lambda T_{d(\s)} T_d+N_{k-1}\mid  \s\in \Std(\mu^{(k)}),
d\in \mathcal D_{f-1,n-1}\}.\end{equation}
By Lemma~\ref{mkspandedd}, $C(f, \lambda)$ can be spanned by a set whose  cardinality is $\sum_{k=1}^m  |M_k|$. Thanks to \cite[Theorem~2.3]{Wen},  the  rank of cell module for Birman-Murakami-Wenzl algebra with respect to $(f, \lambda)$ is equal to
$\sum _{k=1}^m |M_k|$. Since  $\text{rank} ~ C(f, \lambda)$ is equal to that for Birman-Murakami-Wenzl algebra, we have $\text{rank} ~ C(f, \lambda)=\sum _{k=1}^m |M_k| $ and hence $M_k$ is linear independent for any $1\le k\le m$.
Consequently, 
  $M_k$ is a basis of $N_k/N_{k-1}$ for all $1\le k\le m$.  We have
a well-defined $\Bbbk$-linear isomorphism $\phi: N_k/N_{k-1}\rightarrow C(f-1, \mu^{(k)})$ for any $a+1\le k\le m$ such that
\begin{equation}\label{pppqqq} \phi(y_{\mu^{(k)}}^\lambda T_{d(\s)} T_d+N_{k-1})=E^f x_{\mu^{(k)}}T_{d(\s)} T_d+\mathcal B_{n-1}^{\rhd \mu^{(k)}}\end{equation}
for any  $\s\in \Std(\mu^{(k)})$ and $ d\in \mathcal D_{f-1,n-1}$.  By arguments similar to those above (more explicitly, using  Theorem~\ref{sstan}, Lemmas~\ref{matsho} and \ref{efb})
   we see that    $\phi$   is a right  $\mathcal B_{n-1}$-homomorphism.
\end{proof}

\begin{Theorem}\label{Burnch}Suppose $(f, \lambda)\in \Lambda_n$.
Then there is a filtration
$0=N_0\subset N_1\subset \ldots \subset N_m=C(f, \lambda) $  of $\mathcal B_{n-1}$-modules   such that $$N_k/N_{k-1}\cong \begin{cases} C(f, \mu^{(k)}) &\text{if  $1\leq k\leq a$,} \\
C(f-1, \mu^{(k)}) &\text{if  $a+1\leq k\leq m$.}\end{cases}$$
\end{Theorem}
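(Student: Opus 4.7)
The plan is to assemble the three structural results established in the preceding lemmas, so almost no fresh work is needed. First I would note that by Definition~\ref{nkk} the chain $0 = N_0 \subseteq N_1 \subseteq \cdots \subseteq N_m$ is, by construction, an ascending chain of right $\mathcal{B}_{n-1}$-submodules of $C(f,\lambda)$: each $N_k$ is obtained from $N_{k-1}$ by adjoining the $\mathcal{B}_{n-1}$-orbit of the additional vector $y_{\mu^{(k)}}^\lambda$, and the image of $\mathcal{B}_n^{\rhd\lambda}$ is zero in $C(f,\lambda)$, so that $N_0 = 0$.

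Next I would invoke Lemma~\ref{spans} to conclude that the filtration exhausts the cell module, namely $N_m = C(f,\lambda)$. For the successive subquotients, I would split according to the ordering \eqref{ppp11}: for $1 \leq k \leq a$ the isomorphism $N_k/N_{k-1} \cong C(f,\mu^{(k)})$ is exactly Lemma~\ref{fiels1}(2), while for $a+1 \leq k \leq m$ the isomorphism $N_k/N_{k-1} \cong C(f-1,\mu^{(k)})$ is exactly Lemma~\ref{fiels2}. Strictness of each inclusion then follows automatically because every successive quotient is identified with a nonzero cell module.

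All the genuine work has already occurred upstream. The case $1\le k\le a$ was reduced via the Hecke-algebraic branching rule (Theorem~\ref{hecbr}) together with the cellular basis of $E^f\mathcal{B}_n$ (Lemma~\ref{efb}), while the case $a+1\le k\le m$ in Lemma~\ref{fiels2} required the combinatorics of Lemma~\ref{tablsji}, the straightening identity of Lemma~\ref{jaijai}, the key rewriting in Lemma~\ref{dkjhddd}, together with a rank comparison against the cell modules of the Birman--Murakami--Wenzl algebra. Relative to those ingredients, Theorem~\ref{Burnch} itself is a one-line assembly, so I do not anticipate any further obstacle at this stage.
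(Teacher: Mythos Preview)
Your proposal is correct and mirrors the paper's own proof, which simply states that the result follows immediately from Lemmas~\ref{fiels1} and~\ref{fiels2}. Your explicit invocation of Lemma~\ref{spans} for the equality $N_m=C(f,\lambda)$ is fine; in the paper that fact is already absorbed into the dimension count carried out in the proof of Lemma~\ref{fiels2}.
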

\begin{proof}The result follows immediately  from  Lemmas~\ref{fiels1} and \ref{fiels2}.\end{proof}

\section{The Jucys-Murphy elements  and Jucys-Murphy  basis }In this section, we construct the Jucys-Murphy elements and the Jucys-Murphy basis of $\mathcal B_n$. Unless otherwise stated, we assume that $\mathcal B_n$ is defined over $\Bbbk$.
\subsection{Jucys-Murphy  elements }  For any $1\le i\le n$, define
\begin{equation}\label{JM1}L_i=\begin{cases} 0 &\text{if $i=1$,}\\
\sum_{j=1}^{i-1} (j, i)-q^2z^{-1}\sum_{j=1}^{i-1} E_{j, i}, &\text{if $2\le i\le n$,} \\
\end{cases}\end{equation}
where
\begin{equation}\label{eij}(j, i)=T_{j, i-1} T_{i-1} T_{i-1,j}\ \text{ and }\ E_{j, i}= T_{1,j}^{-1} T_{i, 2} E_1 T_{2,i} T_{j,1}^{-1}\end{equation} for any $1\leq j\leq i-1$.
Note that $E_{i, i+1}$ is not the $E_i$ in Definition~\ref{k2222} except the classical limit case.

\begin{Lemma}\label{ln} If  $n\ge 2$, then  $L_n=T_{n-1}L_{n-1}T_{n-1}+T_{n-1}-q^2z^{-1} E_{n-1, n}$. \end{Lemma}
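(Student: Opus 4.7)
The plan is to verify the identity by separating the $j=n-1$ summand from $L_n$ and then showing term-by-term that conjugation by $T_{n-1}$ converts the $j$-th summand of $L_{n-1}$ into the $j$-th summand of $L_n$ for $1\le j\le n-2$. Writing
\begin{equation*}
L_n=\sum_{j=1}^{n-2}(j,n)+(n-1,n)-q^2z^{-1}\sum_{j=1}^{n-2}E_{j,n}-q^2z^{-1}E_{n-1,n},
\end{equation*}
the claim reduces to the three identities
\begin{itemize}
\item[(a)] $(n-1,n)=T_{n-1}$;
\item[(b)] $T_{n-1}(j,n-1)T_{n-1}=(j,n)$ for $1\le j\le n-2$;
\item[(c)] $T_{n-1}E_{j,n-1}T_{n-1}=E_{j,n}$ for $1\le j\le n-2$.
\end{itemize}

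Identity (a) is immediate: unfolding the recursion for $s_{i,j}$ gives $T_{n-1,n-1}=1$, so $(n-1,n)=T_{n-1,n-1}T_{n-1}T_{n-1,n-1}=T_{n-1}$. For (b), I would first unwind the recursive definition of $s_{i,j}$ to record that $T_{j,n-1}=T_jT_{j+1}\cdots T_{n-2}$ and $T_{n-1,j}=T_{n-2}T_{n-3}\cdots T_j$, so that $(j,n-1)$ and $(j,n)$ differ only by whether the central block is $T_{n-2}$ or $T_{n-2}T_{n-1}T_{n-2}$. Since $T_{n-1}$ commutes with every $T_k$ for $k\le n-3$ by the far-commutation in \eqref{kkk1}, I can slide $T_{n-1}$ past $T_j,\ldots,T_{n-3}$ on either side of $T_{n-2}$ in $T_{n-1}(j,n-1)T_{n-1}$, leaving a core $T_{n-1}T_{n-2}T_{n-1}$, which by the braid relation equals $T_{n-2}T_{n-1}T_{n-2}$; reassembling yields $(j,n)$. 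The boundary case $j=n-2$ is the braid relation itself.

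For (c), the same strategy works: $T_{1,j}^{-1}$ and $T_{j,1}^{-1}$ involve only $T_1,\ldots,T_{j-1}$, and since $j-1\le n-3$ we can commute $T_{n-1}$ past both by \eqref{kkk1}. It then remains to observe $T_{n-1}T_{n-1,2}=T_{n,2}$ and $T_{2,n-1}T_{n-1}=T_{2,n}$, which are direct consequences of the definition of $s_{i,j}$ (the factors $T_{n-1,2}=T_{n-2}T_{n-3}\cdots T_2$ and $T_{2,n-1}=T_2T_3\cdots T_{n-2}$ just extend on the appropriate side). Substituting back gives exactly $E_{j,n}$. Combining (a)--(c), the identities $T_{n-1}L_{n-1}T_{n-1}=\sum_{j=1}^{n-2}(j,n)-q^2z^{-1}\sum_{j=1}^{n-2}E_{j,n}$ follows, and adding $T_{n-1}-q^2z^{-1}E_{n-1,n}$ recovers $L_n$. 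The $n=2$ case is checked directly using $L_1=0$.

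There is no real obstacle: the entire argument is a formal manipulation using only the braid and commutation relations in \eqref{kkk1} together with the recursive definition of $T_{i,j}$; no use of $E_1$-relations is required.
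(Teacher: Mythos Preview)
Your proof is correct and follows the same approach as the paper, which simply states that the lemma follows immediately from the definition \eqref{JM1}; you have merely spelled out the term-by-term verification that the paper leaves implicit.
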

\begin{proof} The result  follows immediately from \eqref{JM1}.\end{proof}
\begin{Lemma}\label{ln1} Suppose $n>3$ and   $x_m=E_{m-2,m}+E_{m-1,m}$ for $m\ge 3$. Then
  \begin{multicols}{2}
  \item[(1)]
  $E_1 E_{n-1, n}=E_{n-1,n}E_1$, \item[(2)] $T_{m-2} x_m=x_m T_{m-2}$,
 \item[(3)]$T_{n-2} E_{i, n}=E_{i, n} T_{n-2}$ if $1\le i\le n-3$,
 \item[(4)] $T_jE_{n-1, n}=E_{n-1, n}T_j$ if $1\le j\le n-3$.
\end{multicols}
\end{Lemma}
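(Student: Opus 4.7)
All four parts are proved by direct computation with the defining relations of $\mathcal{B}_n$, starting from $E_{j,i} = T_{1,j}^{-1} T_{i,2} E_1 T_{2,i} T_{j,1}^{-1}$. The factorizations $T_{1,j}^{-1} = T_{j-1}^{-1} T_{1,j-1}^{-1}$, $T_{j,1}^{-1} = T_{j-1,1}^{-1} T_{j-1}^{-1}$, $T_{i,2} = T_{i-1} T_{i-1,2}$ and $T_{2,i} = T_{2,i-1} T_{i-1}$ immediately yield two recursions
\begin{equation*}
(\mathrm{R1})\ E_{j,i} = T_{j-1}^{-1} E_{j-1,i} T_{j-1}^{-1}\ \ (1<j<i), \qquad (\mathrm{R2})\ E_{j,i} = T_{i-1} E_{j,i-1} T_{i-1}\ \ (j\le i-2),
\end{equation*}
which organize the rest of the argument.

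For (3), $T_{n-2}$ commutes with every factor $T_k^{\pm 1}$ in $T_{1,i}^{-1}$ and $T_{i,1}^{-1}$ (all indices satisfy $k\le i-1\le n-4$), while the braid $T_{n-2} T_{n-1} T_{n-2} = T_{n-1} T_{n-2} T_{n-1}$ produces $T_{n-2} T_{n,2} = T_{n,2} T_{n-1}$ and, by $\sigma$, $T_{n-1} T_{2,n} = T_{2,n} T_{n-2}$; since $n-1\ge 3$ the intermediate $T_{n-1}$ passes through $E_1$, and the two braids cancel. Part (4) is analogous: for $3\le j\le n-3$, every factor of $E_{n-1,n}$ commutes with $T_j$ by distance arguments. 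For $j=1$, the braid consequence $T_1 T_2^{-1} T_1^{-1} = T_2^{-1} T_1^{-1} T_2$ gives $T_1 T_{1,n-1}^{-1} = T_{1,n-1}^{-1} T_2$, and the cascade $T_2 T_{n,2}=T_{n,2} T_3$, $T_3 E_1 = E_1 T_3$, $T_3 T_{2,n} = T_{2,n} T_2$, $T_2 T_{n-1,1}^{-1} = T_{n-1,1}^{-1} T_1$ brings $T_1$ through $E_{n-1,n}$ unchanged; for $j=2$ (needed only when $n\ge 5$) the same strategy works starting from $T_2 T_{1,n-1}^{-1} = T_{1,n-1}^{-1} T_3$.

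For (2), specialize (R1) to $j=m-1$, $i=m$ to obtain $E_{m-1,m} = T_{m-2}^{-1} E_{m-2,m} T_{m-2}^{-1}$. Set $a = q-q^{-1}$, so $T_{m-2}^{-1} = T_{m-2} - a$. Multiplying on the right by $T_{m-2}$ yields
\begin{equation*}
T_{m-2} E_{m-2,m} = E_{m-1,m} T_{m-2} + a\, E_{m-2,m},
\end{equation*}
while multiplying on the left by $T_{m-2}$ gives $T_{m-2} E_{m-1,m} = E_{m-2,m} T_{m-2}^{-1} = E_{m-2,m} T_{m-2} - a\, E_{m-2,m}$. Adding the two identities, the $\pm a\, E_{m-2,m}$ corrections cancel and (2) follows.

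Finally (1) is proved by induction on $n\ge 4$. Combining (R1) and (R2) gives $E_{n-1,n} = T_{n-2}^{-1} T_{n-1} E_{n-2,n-1} T_{n-1} T_{n-2}^{-1}$; for $n\ge 5$ both $T_{n-2}$ and $T_{n-1}$ commute with $E_1$, so the inductive hypothesis $E_1 E_{n-2,n-1} = E_{n-2,n-1} E_1$ immediately yields $E_1 E_{n-1,n} = E_{n-1,n} E_1$. For the base case $n=4$, write $E_{3,4} = X E_1 Y$ with $X = T_2^{-1} T_1^{-1} T_3 T_2$ and $Y = T_2 T_3 T_1^{-1} T_2^{-1}$, and note $Y=\sigma(X)$ and $\sigma(E^2)=E^2$. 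Lemma~\ref{k2}(3) gives $E_1 X E_1 = E^2$ and, applying $\sigma$, also $E_1 Y E_1 = E^2$; Definition~\ref{qqq}(5) gives $Y E^2 = E^2 = E^2 Y$, and applying $\sigma$ to $E^2 Y = E^2$ yields $X E^2 = E^2$. Therefore
\begin{equation*}
E_1 E_{3,4} = (E_1 X E_1) Y = E^2 Y = E^2 = X E^2 = X (E_1 Y E_1) = E_{3,4} E_1 .
\end{equation*}
The main obstacle is pinpointing the right manipulation in the base case of (1); once one recognizes the $\sigma$-symmetry $Y = \sigma(X)$, the identity $X E^2 = E^2$ is forced by $E^2 Y = E^2$ without computation, and the remaining parts reduce to routine bookkeeping with the braid and $q$-Brauer relations.
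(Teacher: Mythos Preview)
Your argument is essentially correct, and for parts (2)--(4) you give considerably more detail than the paper, which simply says these follow from the braid relations and $T_iE_1=E_1T_i$ for $i>2$. One imprecision: in (4) for $3\le j\le n-3$ it is not true that ``every factor of $E_{n-1,n}$ commutes with $T_j$ by distance''---for instance $T_{1,n-1}^{-1}$ contains $T_{j-1}^{-1},T_j^{-1},T_{j+1}^{-1}$, and in fact $T_j T_{1,n-1}^{-1}=T_{1,n-1}^{-1}T_{j+1}$, not $T_{1,n-1}^{-1}T_j$. The fix is immediate: the same cascade you wrote out for $j=1,2$ works verbatim for all $1\le j\le n-3$, producing $T_j\to T_{j+1}\to T_{j+2}$ (through $E_1$, legitimate since $j+2\ge 3$) $\to T_{j+1}\to T_j$.

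For (1) your route is genuinely different from the paper's. The paper proves (1) for all $n>3$ in one direct computation: it pulls the factors $T_{3,n-1}^{-1}$ and $T_{n,4}$ (which commute with $E_1$) out of $E_1E_{n-1,n}$, leaving $E_1T_2^{-1}T_1^{-1}T_3T_2E_1=E^2$ in the middle, and then uses $E^2\,T_2T_3T_1^{-1}T_2^{-1}=E^2$ to see that both $E_1E_{n-1,n}$ and $E_{n-1,n}E_1$ equal $T_{3,n-1}^{-1}T_{n,4}\,E^2\,T_{4,n}T_{n-1,3}^{-1}$. You instead induct on $n$ via the recursion $E_{n-1,n}=T_{n-2}^{-1}T_{n-1}E_{n-2,n-1}T_{n-1}T_{n-2}^{-1}$, with your base case $n=4$ being exactly the core of the paper's computation (the identities $E_1XE_1=E^2=E_1YE_1$ and $XE^2=E^2=E^2Y$). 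Both approaches rely on the same two ingredients, Lemma~\ref{k2}(3) and Definition~\ref{qqq}(5); the paper's is a bit shorter, while your inductive packaging makes the role of the recursions (R1),(R2) transparent.
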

\begin{proof}Thanks to Definition~\ref{qqq}(4)(5) and Lemma~\ref{k2}(3), $$\begin{aligned}   E_1E_{n-1, n}&
=T_{3,n-1}^{-1}T_{n,4} E_1 T_{2}^{-1} T_1^{-1} T_3T_2E_1T_{2,n} T_{n-1,1}^{-1}\\
&=T_{3,n-1}^{-1}T_{n,4} E_1 T_{2}^{-1} T_1^{-1} T_3T_2E_1T_{4,n} T_{n-1,3}^{-1}\\
&=E_{n-1, n} E_1.\\
\end{aligned}$$
This proves    (1).
 One can easily verify (2)-(4) by braid relations in \eqref{kkk1} and Definition~\ref{k2222}(4).\end{proof}

\begin{Lemma}\label{k1}Suppose $n\ge 2$.  For any $x\in \mathcal B_{n-1}$,  $L_nx=xL_n$. \end{Lemma}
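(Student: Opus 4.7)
The plan is to proceed by induction on $n$. The base $n=2$ is vacuous since $\mathcal{B}_1 = \Bbbk$. For the inductive step, since $\mathcal{B}_{n-1}$ is generated by $E_1, T_1, \ldots, T_{n-2}$, it suffices to check that $L_n$ commutes with each generator.

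For $E_1$ and $T_i$ with $1 \le i \le n-3$ (all lying in $\mathcal{B}_{n-2}$, assuming $n \ge 4$), I will use the recursive expression from Lemma \ref{ln},
\[
L_n = T_{n-1}L_{n-1}T_{n-1} + T_{n-1} - q^2 z^{-1} E_{n-1,n},
\]
and show that each of the three summands individually commutes with $\mathcal{B}_{n-2}$. The factor $T_{n-1}$ commutes with $T_i$ ($i \le n-3$) by braid relations and with $E_1$ by Definition \ref{qqq}(4); $L_{n-1}$ commutes with $\mathcal{B}_{n-2}$ by the induction hypothesis; and $E_{n-1,n}$ commutes with the $T_i$'s by Lemma \ref{ln1}(4) and with $E_1$ by Lemma \ref{ln1}(1). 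The small cases $n=2,3$ fall out directly (either trivially or by a short direct check using $E_1 T_2 E_1 = zE_1$ and the quadratic relation).

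For $T_{n-2}$, the recursive formula is awkward because $T_{n-2}$ does not commute with $T_{n-1}$, so instead I will work directly from the explicit definition \eqref{JM1}. I will split each sum according to whether $j \le n-3$ or $j \in \{n-2, n-1\}$. For $j \le n-3$, $T_{n-2}$ commutes termwise: Lemma \ref{ln1}(3) takes care of $E_{j,n}$, and for the Hecke term $(j,n) = T_{j,n-1}T_{n-1}T_{n-1,j}$ one slides $T_{n-2}$ past using braid relations, reflecting the fact that the transposition $(j,n)$ is fixed by conjugation by $s_{n-2}$. For $j \in \{n-2, n-1\}$ I group the pairs: the part $E_{n-2,n} + E_{n-1,n}$ commutes with $T_{n-2}$ by Lemma \ref{ln1}(2) (with $m=n$), while the Hecke pair $(n-2,n) + (n-1,n) = T_{n-2}T_{n-1}T_{n-2} + T_{n-1}$ is verified to commute with $T_{n-2}$ by a direct computation using $T_{n-2}^2 = 1 + (q-q^{-1})T_{n-2}$: both $T_{n-2}\bigl(T_{n-2}T_{n-1}T_{n-2}+T_{n-1}\bigr)$ and $\bigl(T_{n-2}T_{n-1}T_{n-2}+T_{n-1}\bigr)T_{n-2}$ reduce to $T_{n-2}T_{n-1} + T_{n-1}T_{n-2} + (q-q^{-1})T_{n-2}T_{n-1}T_{n-2}$.

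The main technical obstacle is the Hecke-algebra commutation $T_{n-2}(j,n) = (j,n)T_{n-2}$ for $j \le n-3$: although this is conceptually transparent from the symmetric-group picture, it requires a careful unfolding of the reduced word $T_{j,n-1}T_{n-1}T_{n-1,j}$ via braid relations. Once this (standard) identity is in hand, everything else assembles from Lemma \ref{ln1} and the inductive hypothesis.
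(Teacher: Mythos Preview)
Your proposal is correct and follows essentially the same approach as the paper: induction on $n$, handling $E_1$ and $T_1,\ldots,T_{n-3}$ via the recursion of Lemma~\ref{ln} together with Lemma~\ref{ln1}(1)(4), and handling $T_{n-2}$ via the explicit formula \eqref{JM1} using Lemma~\ref{ln1}(2)(3) for the $E$-part and the Hecke-algebra commutation for the $(j,n)$-part. The only cosmetic difference is that the paper simply cites the Hecke commutation $T_{n-2}\sum_{j}(j,n)=\sum_j(j,n)T_{n-2}$ as the known Jucys--Murphy property of $\mathcal H_n$, whereas you unpack it termwise for $j\le n-3$ and pairwise for $j\in\{n-2,n-1\}$.
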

\begin{proof} We prove the result by induction on $n$.
 The result is trivial when $n=2$.
 We have \begin{equation}\label{l3} L_3=T_2T_1T_2+T_2-q^2z^{-1} (E_{1,3}+E_{2, 3}).\end{equation}
 It is easy to verify $E_1L_3=L_3E_1=0$.
 It is known that $T_2+T_2T_1T_2$ is the corresponding Jucys-Murphy element of $\mathcal H_3$ (e.g., \cite{Ma}) and hence  commutes with $T_1$. Then
    by  Lemma~\ref{ln1}(2) and \eqref{l3},   $L_3 T_1=T_1L_3$. This proves  the result when $n=3$. Suppose $n>3$. It is enough to prove that $L_nx=xL_n$ for any $x\in \{E_1, T_1, \ldots, T_{n-2}\}$.
By induction assumption on $n-1$, Definition~\ref{qqq} and Lemma~\ref{ln1}(1),
$E_1$ commutes with both $T_{n-1}L_{n-1}T_{n-1}+T_{n-1}$ and $E_{n-1, n}$.
By Lemma~\ref{ln},
$E_1L_n=L_nE_1$.   By induction assumption on $n-1$ and  Lemma~\ref{ln1}(4), $T_1, T_2, \ldots, T_{n-3} $ commute with both
 $T_{n-1}L_{n-1}T_{n-1} +T_{n-1}$ and $E_{n-1, n}$. Using Lemma~\ref{ln} again yields $L_n x=xL_n$ for any
 $x\in \{T_1, T_2, \ldots, T_{n-3}\}$.
It is known that  $T_{n-2}\sum_{i=1}^{n-1} (i, n)=\sum_{i=1}^{n-1} (i, n) T_{n-2}~$ 
since $\sum_{i=1}^{n-1} (i, n)$ is the corresponding Jucys-Murphy  element of $\mathcal H_{n}$.   
 Then by Lemma~\ref{ln1}(2)(3), $T_{n-2}L_n=L_nT_{n-2}$.
 \end{proof}
 \begin{Lemma}\label{JMA}For any $1\le i\le j\le n$, $L_iL_j=L_jL_i$.  \end{Lemma}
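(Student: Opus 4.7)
The plan is to deduce Lemma~\ref{JMA} as an immediate consequence of Lemma~\ref{k1}, which has already established the crucial commutation property. There is essentially no serious obstacle here; the work has been done upstream.

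First, handle the trivial case $i=j$, where the claim $L_iL_j=L_jL_i$ is automatic. For the remaining case $i<j$, I would observe from the defining formula \eqref{JM1}--\eqref{eij} that $L_i$ is built from products of the generators $T_1,\ldots,T_{i-1}$ together with $E_1$, all of which sit inside the subalgebra $\mathcal B_i$. Since $i<j$, we have the chain of inclusions
\[
L_i \in \mathcal B_i \subseteq \mathcal B_{j-1}.
\]

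Now I would apply Lemma~\ref{k1} with $n$ replaced by $j$: since $L_j$ commutes with every element of $\mathcal B_{j-1}$, and $L_i$ lies in $\mathcal B_{j-1}$ by the previous step, we conclude
\[
L_j L_i = L_i L_j,
\]
which is the desired commutation. This covers all cases $1\le i\le j\le n$, completing the proof. The only point worth checking is that Lemma~\ref{k1}, as stated for the ambient algebra $\mathcal B_n$, applies equally inside each $\mathcal B_j$ with $2\le j\le n$; this is immediate because the defining formula for $L_j$ only involves generators of $\mathcal B_j$ and the lemma's proof is internal to $\mathcal B_j$.
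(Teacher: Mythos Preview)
Your proposal is correct and follows essentially the same approach as the paper: the paper's proof simply notes that $L_i\in\mathcal B_{j-1}$ when $i<j$ and then invokes Lemma~\ref{k1}. Your version is a slightly more expanded rendition of the same argument.
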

 \begin{proof} Obviously, $L_i\in \mathcal B_{j-1}$ if $i<j$.
 Now, the result follows from  Lemma~\ref{k1}.
  \end{proof}

 Thanks to Lemma~\ref{JMA}, the subalgebra $L$ of $\mathcal B_n$
 generated by $\{L_1, L_2, \ldots, L_n\}$ is a commutative subalgebra.
 By Remark~\ref{limit},  the Brauer algebra $B_n(a)$  can be considered as the classical limit of
   $\mathcal B_n(q, q^a)$.
 In this case, $\{L_i\mid 1\le i\le  m\}$
 is the set of corresponding  Jucys-Murphy  elements in \cite{Eng}  and   $\sum_{i=1}^n L_i$ is  a central element.
 However,  $\sum_{i=1}^n L_i$ may not be  a central element  in general. One can easily find a counterexample in $\mathcal B_4$.

\subsection{A new basis  of a cell module}
Suppose $(f, \lambda)\in \Lambda_n$.
Write $\mu\rightarrow \lambda$ if $\mu\in \mathcal {RA}(\lambda)$.
 An up-down tableau $\mathbf t$  of type $\lambda$ is a sequence of partitions $\mathbf t=(\mathbf t_0, \mathbf t_1,\ldots,\mathbf t_n)$ such that
$\mathbf t_0=\emptyset$, $\mathbf t_i\rightarrow\mathbf t_{i+1}$, $0\leq i\leq n-1$  and  $\mathbf t_n=\lambda$.
Let $\mathscr T_n^{ud}(\lambda)$ be the set of all up-down tableaux of type $\lambda$.
There is a partial order $ \preceq $ on $\mathscr T_n^{ud}(\lambda)$
such that    $\mathbf t\prec \mathbf s $ if $\mathbf t_k\lhd\mathbf s_k$ for some $k$
and $\mathbf t_j=\mathbf s_j$ for all $k<j\leq n$. In this  case, we also write $\mathbf t\overset k \prec \mathbf s $.

\begin{Defn}\label{murr} Suppose $(f, \lambda)\in \Lambda_n$, $\mathbf t\in \mathscr T_n^{ud}(\lambda)$ and $\mathbf t_{n-1}=\mu$. Define $\mathrm m_{\mathbf t}=E^f x_\lambda b_{\mathbf t}$, where  $b_{\mathbf t}=b_{\mathbf t_{n}}$ which can be defined inductively as follows:
$$ b_{\mathbf t_n}=\begin{cases} T_{a_k, n} b_{\mathbf t_{n-1}} &\text{if $\lambda=\mu\cup (k, \mu_k+1)$,}\\
T_{n,2f}^{-1} \sum_{j=b_{k-1}+1}^{b_k} q^{b_k-j} T_{j, 2f-1}^{-1} b_{\mathbf t_{n-1}} &\text{if $\lambda=\mu\setminus (k, \mu_k)$,}\\
\end{cases}$$ and  $a_k=2f+\sum_{j=1}^k \lambda_j$ and  $b_i=2f-1+\sum_{j=1}^i \lambda_j$, for $i\in \{k-1, k\}$.

\end{Defn}
In fact one can easily check that \begin{equation}\label{mpbb}
\mathrm m_{\mathbf t_n}=\begin{cases}  \sum_{j=a_{k-1}+1}^{a_k}T_{j,n}\mathrm m_{\mathbf t_{n-1}} &
\text{if $ \lambda=\mu\cup (k,\mu_k+1)$,}\\
 E_{2f-1}T_{n,2f}^{-1}T_{b_k,2f-1}^{-1}\mathrm m_{\mathbf t_{n-1}} &\text{if
  $\lambda=\mu\setminus(k,\mu_k)$,}\end{cases}\end{equation}
and $\mathrm m_{\mathbf t_n}=y_\mu^\lambda b_{\mathbf t_{n-1}}$ in any case.
\begin{Theorem}\label{murphyb}
Suppose $(f, \lambda)\in \Lambda_n$.
\begin{itemize}
\item[(1)]The cell module $C(f, \lambda)$ has $\Bbbk$-basis  $\{\mathrm m_{\mathbf t}+\mathcal B_{n}^{\rhd \lambda}\mid \mathbf t\in \mathscr T_n^{ud}(\lambda) \}$.
\item[(2)] For any  $1\leq j\leq m$,
 the $\Bbbk$-module $M_j$  spanned by
$\{ \mathrm m_{\mathbf t} + \mathcal B_{n}^{\rhd \lambda} \mid \mathbf t\in \mathscr T_n^{ud}(\lambda),
\mathbf t_{n-1}\unrhd \mu^{(j)}\}$  is a $\mathcal B_{n-1}$-submodule of $C(f, \lambda)$, where $\mu^{(j)}$ is  given  in \eqref{ppp11}.
\item[(3)] For any  $1\leq j\leq m$, $M_j=N_j$  where $N_j$'s are given in Definition~\ref{nkk}.
    \item[(4)] As $\mathcal B_{n-1}$-modules, $M_j/M_{j-1} \cong  C(\ell,  \mu^{(j)})$, where
$\ell=f$ if $1\le j\le a$ and $\ell=f-1$ if $a+1\le j\le m$.
The required isomorphism  sends $ \mathrm m_{\mathbf t}+M_{j-1}$ to
 $ \mathrm m_{\mathbf t\downarrow_{n-1}}+ \mathcal B_{n-1}^{\rhd \mu^{(j)}}$,
 where $\mathbf t\! \downarrow_{n-1}=(\mathbf t_0,\mathbf t_1,\ldots,\mathbf t_{n-1})\in
  \mathscr T_{n-1}^{ud}(\mu^{(j)})$.

\item [(5)]  The $\Bbbk$-algebra
 $\mathcal B_n$ has  cellular basis
$$\{ \mathrm m_{\mathbf s \mathbf t}  \mid  \mathbf s, \mathbf t\in \mathscr T^{ud}_n(\lambda),\
(f, \lambda)\in \Lambda_n\}$$
where $\mathbf m_{\mathbf s \mathbf t}=\sigma(b_{\mathbf s}) E^f x_\lambda b_{\mathbf t}$ and
the required  $\sigma$  is given in Lemma~\ref{anti}.

 \end{itemize}
\end{Theorem}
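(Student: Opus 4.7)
The plan is to establish items (1)--(4) simultaneously by induction on $n$, with the trivial case $n \leq 1$ as the base. The inductive step is driven by the branching filtration $0 = N_0 \subset N_1 \subset \cdots \subset N_m = C(f,\lambda)$ of Theorem~\ref{Burnch}, together with the explicit $\mathcal{B}_{n-1}$-isomorphisms $\phi_j : N_j/N_{j-1} \to C(\ell,\mu^{(j)})$ from Lemmas~\ref{fiels1}(2) and~\ref{fiels2}, where $\ell = f$ for $1 \leq j \leq a$ and $\ell = f-1$ for $a+1 \leq j \leq m$. The key observation, read off directly from Definition~\ref{murr} and~\eqref{mpbb}, is that for $\mathbf{t} \in \mathscr{T}_n^{ud}(\lambda)$ with $\mathbf{t}_{n-1} = \mu^{(j)}$ we have $\mathrm{m}_{\mathbf{t}} = y_{\mu^{(j)}}^{\lambda}\, b_{\mathbf{t}\downarrow_{n-1}}$, with $b_{\mathbf{t}\downarrow_{n-1}} \in \mathcal{B}_{n-1}$. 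This immediately gives $M_j \subseteq N_j$ for each $j$.

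Since $\phi_j$ is a right $\mathcal{B}_{n-1}$-homomorphism and, by~\eqref{nkk123} and~\eqref{pppqqq}, sends the class of $y_{\mu^{(j)}}^{\lambda}$ to the class of $E^{\ell} x_{\mu^{(j)}}$, the $\mathcal{B}_{n-1}$-linearity yields
\[
\phi_j\bigl(\mathrm{m}_{\mathbf{t}} + N_{j-1}\bigr) \;=\; E^{\ell} x_{\mu^{(j)}}\, b_{\mathbf{t}\downarrow_{n-1}} + \mathcal{B}_{n-1}^{\rhd \mu^{(j)}} \;=\; \mathrm{m}_{\mathbf{t}\downarrow_{n-1}} + \mathcal{B}_{n-1}^{\rhd \mu^{(j)}}.
\]
By the inductive hypothesis applied to $\mathcal{B}_{n-1}$ and $\mu^{(j)}$, the right-hand sides form a $\Bbbk$-basis of $C(\ell,\mu^{(j)})$ as $\mathbf{t}\downarrow_{n-1}$ ranges over $\mathscr{T}_{n-1}^{ud}(\mu^{(j)})$. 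Pulling this basis back through $\phi_j$ shows that $\{\mathrm{m}_{\mathbf{t}} + N_{j-1} \mid \mathbf{t}_{n-1} = \mu^{(j)}\}$ is a basis of $N_j/N_{j-1}$. Concatenating these bases across $j=1,\dots,m$ produces the proposed basis of $C(f,\lambda)$ in (1). The inclusion $M_j \subseteq N_j$ combined with the matching of dimensions in each quotient forces $M_j = N_j$, yielding (2) and (3); item (4) is then precisely the isomorphism $\phi_j$ under this identification, with the stated action being what we just computed.

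For item (5), I would deduce cellularity from (1) combined with Theorem~\ref{sstan}. Since the anti-involution $\sigma$ of Lemma~\ref{anti} fixes $x_\lambda$ and fixes $E^f$ by Lemma~\ref{k2}(2), the element $\mathrm{m}_{\mathbf{s}\mathbf{t}} = \sigma(b_{\mathbf{s}}) E^f x_\lambda b_{\mathbf{t}}$ is $\sigma$-compatible in the sense required by \cite[Definition~1.1]{GL}. Item (1), applied to both left and right cell modules, shows that within each layer $\mathcal{B}_n^{\unrhd \lambda}/\mathcal{B}_n^{\rhd \lambda}$ the set $\{\mathrm{m}_{\mathbf{s}\mathbf{t}}\}$ expresses an invertible (upper-triangular) change of basis from the original cellular basis $\{C^\lambda_{(w,\s),(v,\t)}\}$ of Theorem~\ref{sstan}; the cellular axioms then transport automatically. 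The main obstacle I anticipate is the first bullet: verifying that the isomorphism $\phi_j$ of Lemma~\ref{fiels2}, which is only defined on a spanning set of $N_j/N_{j-1}$ in~\eqref{pppqqq}, is truly a $\mathcal{B}_{n-1}$-module map on all of $\mathcal{B}_{n-1}$ acting on $y_{\mu^{(j)}}^{\lambda}$. This requires using Theorem~\ref{sstan} and Lemma~\ref{efb} to expand a general $b \in \mathcal{B}_{n-1}$ as a sum of elements of the form in~\eqref{setS}, and carefully tracking congruences modulo $N_{j-1}$ and $\mathcal{B}_n^{\rhd \lambda}$ to justify that both sides of the asserted equality agree.
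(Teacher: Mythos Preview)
Your proposal is correct and follows essentially the same approach as the paper: induction on $n$, transport of the Jucys--Murphy basis of $C(\ell,\mu^{(j)})$ through the $\mathcal{B}_{n-1}$-isomorphisms of Lemmas~\ref{fiels1} and~\ref{fiels2} to identify $M_j=N_j$ layer by layer, and then the standard cellular-algebra argument (using $\sigma(E^f)=E^f$ and $\sigma(x_\lambda)=x_\lambda$) to obtain~(5). The ``obstacle'' you flag is not actually an obstacle: Lemmas~\ref{fiels1}(2) and~\ref{fiels2} already assert that $\phi_j$ is a right $\mathcal{B}_{n-1}$-module isomorphism, so once you cite them you may use $\mathcal{B}_{n-1}$-linearity freely without redoing the expansion via Theorem~\ref{sstan} and Lemma~\ref{efb}.
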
\begin{proof}Thanks to Lemmas~\ref{fiels1},~\ref{fiels2},
the  $\Bbbk$-linear map $\phi: C(\ell, \mu^{(j)})\rightarrow N_j/N_{j-1}$ sending
   $E^\ell x_{\mu^{(j)}} T_{d(\t)}T_d+\mathcal B_{n-1}^{\rhd \mu^{(j)}}$ to
   $y^\lambda_{\mu^{(j)}}  T_{d(\t)}T_d+N_{j-1}$ is the required $\mathcal B_{n-1}$-isomorphism. In particular,  $\phi(E^\ell x_{\mu^{(j)}}+\mathcal B_{n-1}^{\rhd \mu^{(j)}})= y_{\mu^{(j)}}^\lambda+N_{j-1}$.
    By induction assumption on $n-1$,  $C(\ell, \mu^{(j)})$ has basis
   $$\{E^\ell x_{\mu^{(j)}} b_{\mathbf u}+\mathcal B_{n-1}^{\rhd \mu^{(j)}} \mid \mathbf u\in \mathscr T^{ud}_{n-1}(\mu^{(j)})\}.$$
  Note that
 $\phi^{-1} (\mathrm m_{\mathbf u} +\mathcal B_{n-1}^{\rhd \mu^{(j)}})=\mathrm m_{\mathbf t}+N_{j-1}$ where
 $ \mathrm m_{\mathbf t}=y_{\mu^{(j)}}^\lambda b_{\mathbf u}$ and $\mathbf t\!\downarrow_{n-1}=\mathbf u$.
   Using the  isomorphism $\phi$ and induction assumption on $N_{j-1}$,
   we see that  $N_{j}$ has  basis
   $\{\mathrm m_{\mathbf t}+\mathcal B_{n}^{\rhd \lambda}\mid \mathbf t \in \mathscr T_n^{ud}(\lambda), \mathbf t_{n-1}\unrhd \mu^{(j)}\}$. In particular, $M_j=N_j$ where $M_j$ is defined in (2). This proves (1)--(4).
 Note that $\mathcal B_n$ is a cellular algebra. Thanks to Lemma~\ref{k2}(2), $E^f$ is fixed by the anti-involution $\sigma$.
 It is easy to verify $\sigma(x_\lambda)=x_\lambda$. By (1),
we have the corresponding results for left cell modules. This implies (5) by using standard arguments on cellular algebras (see \cite[Theorem~2.7]{RS3} for Brauer algebras).
\end{proof}

The basis in Theorem~\ref{murphyb}(1) (resp., (5)) is
 called the \textsf{Jucys-Murphy basis} of $C(f, \lambda)$ (resp., $\mathcal B_n$).

\begin{Lemma}\label{ttt1} Suppose  $0\le f\le \lfloor \frac n2\rfloor$.
 \begin{itemize}\item[(1)] $E^{f} E_{i, n}\in \mathcal B_n^{f+1}$ if $2f+1\le i\le n-1$.
 \item[(2)] $E^f((2j-1, n)+(2j, n)-q^2z^{-1} E_{2j-1, n}-q^2z^{-1} E_{2j, n})=0$ if $1\le j\le f$. \end{itemize}
\end{Lemma}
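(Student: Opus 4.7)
For part (1), I factor $E_{i,n}=T_{1,i}^{-1}T_{n,2}E_1T_{2,n}T_{i,1}^{-1}$ by splitting each $T$-product according to whether its generators have index $\le 2f$ or $\ge 2f+1$. Specifically, $T_{1,i}^{-1}=T_{2f+1,i}^{-1}T_{1,2f+1}^{-1}$ and $T_{i,1}^{-1}=T_{2f+1,1}^{-1}T_{i,2f+1}^{-1}$, while $T_{n,2}=T_{n,2f+2}T_{2f+2,2}$ and $T_{2,n}=T_{2,2f+2}T_{2f+2,n}$. The outer factors $T_{2f+1,i}^{-1}$, $T_{n,2f+2}$, $T_{2f+2,n}$, $T_{i,2f+1}^{-1}$ only involve braid generators of index $\ge 2f+1$, hence commute with $E^f=E_1E_3\cdots E_{2f-1}$ by Lemma~\ref{k3}(2). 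Pulling them through, and noting that $T_{1,2f+1}^{-1}$ commutes with $T_{n,2f+2}$ while $T_{2f+1,1}^{-1}$ commutes with $T_{2f+2,n}$, one obtains
\[
E^fE_{i,n}=T_{2f+1,i}^{-1}T_{n,2f+2}\,\bigl(E^fT_{1,2f+1}^{-1}T_{2f+2,2}E_1\bigr)\,T_{2,2f+2}T_{2f+1,1}^{-1}T_{2f+2,n}T_{i,2f+1}^{-1}.
\]
The bracketed subword equals $E^{f+1}$ by Lemma~\ref{k2}(1), so $E^fE_{i,n}\in\mathcal B_n^{f+1}$ as required. The degenerate case $i=2f+1$ (where $T_{2f+1,i}^{-1}=T_{i,2f+1}^{-1}=1$) fits in without modification.

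For part (2), I induct on $n$. The key recursions are
\[
(i,n)=T_{n-1}(i,n-1)T_{n-1}\quad\text{and}\quad E_{i,n}=T_{n-1}E_{i,n-1}T_{n-1}\qquad(i\le n-2).
\]
The first follows from the braid relation $T_{n-1}T_{n-2}T_{n-1}=T_{n-2}T_{n-1}T_{n-2}$; the second follows directly from the definition of $E_{i,n}$ together with the commutation of $T_{n-1}$ with every $T_j$ for $j\le n-3$. For $n\ge 2f+2$ we have $n-1\ge 2f+1$, so $T_{n-1}$ commutes with $E^f$ by Lemma~\ref{k3}(2), and the bracketed expression at level $n$ is precisely $T_{n-1}$ times the one at level $n-1$ times $T_{n-1}$. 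The inductive step is then immediate.

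This leaves the base case $n=2f+1$. For $j=f$, one uses $(2f,2f+1)=T_{2f}$, $(2f-1,2f+1)=T_{2f-1}T_{2f}T_{2f-1}$, and $E^fT_{2f-1}=qE^f$ (Lemma~\ref{k2}(6)) to rewrite the two transposition terms, together with the explicit expansions of $E_{2f-1,2f+1}$ and $E_{2f,2f+1}$ plus $E_1T_2E_1=zE_1$ (Definition~\ref{qqq}(3)) to rewrite the two $E$-terms. A direct computation---modelled on the case $f=j=1$, $n=3$, where $E_1\bigl((1,3)-q^2z^{-1}E_{2,3}\bigr)=q(q-q^{-1})E_1T_2$ and $E_1\bigl((2,3)-q^2z^{-1}E_{1,3}\bigr)=-(q-q^{-1})qE_1T_2$ sum to zero---then shows that the quantum corrections in the four terms cancel. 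For $j<f$, write $E^f=E'E_{2j-1}$ with $E'=\prod_{i\ne j}E_{2i-1}$ (using Lemma~\ref{k3}(3)), and exploit the commutation of each factor of $E'$ with $(2j-1,n)$, $(2j,n)$, $E_{2j-1,n}$, $E_{2j,n}$ to reduce to the case just handled.

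The principal obstacle is the base case. Although the cancellation is transparent at low rank, carrying it out uniformly for all $(f,j,n=2f+1)$ requires careful bookkeeping of the quantum corrections that arise each time one invokes $T_i^{-1}=T_i-(q-q^{-1})$ or commutes $T_{2f}^{\pm1}$ past $E^f$. Conceptually, the cancellation is forced by the symmetry of the horizontal arc $\{2j-1,2j\}$ inside $E^f$: this symmetry pairs $(2j-1,n)$ with $E_{2j,n}$ and $(2j,n)$ with $E_{2j-1,n}$ up to a common error term which cancels under the signed sum with coefficient $q^2z^{-1}$.
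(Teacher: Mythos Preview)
Your argument for part~(1) is correct and coincides with the paper's: both split the $T$-products at index $2f$, commute the outer factors through $E^f$, and recognise $E^f T_{1,2f+1}^{-1}T_{2f+2,2}E_1=E^{f+1}$ via Lemma~\ref{k2}(1).

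For part~(2) there is a genuine gap in your base case. Your inductive step for $n\ge 2f+2$ is fine, and the direct verification at $j=f$, $n=2f+1$ is correct (it is the $n=3$ computation conjugated up). The problem is the reduction for $j<f$. You claim that each factor $E_{2i-1}$ with $i\ne j$ commutes with $(2j-1,n)$, $(2j,n)$, $E_{2j-1,n}$, $E_{2j,n}$; this is false for $i>j$. For instance, at $n=2f+1$ the element $(2j,n)=T_{2j}T_{2j+1}\cdots T_{2f}\cdots T_{2j+1}T_{2j}$ contains $T_{2j}$ and $T_{2j+2}$, neither of which commutes with $E_{2j+1}$ (Lemma~\ref{k3}(2) requires distance $\ge 2$). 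The same obstruction occurs for $E_{2j\pm 1,n}$, which involve $T_2,\ldots,T_{n-1}$. So the factorisation $E^f=E'E_{2j-1}$ does not isolate the problem, and ``reduce to the case just handled'' does not go through.

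The paper avoids induction on $n$ altogether. It proves, directly for arbitrary $n$, three identities
\[
E^f(2j-1,n)=qE^f(2j,n)T_{2j-1},\qquad
E^fE_{2j-1,n}=zE^f(2j,n),\qquad
E^fE_{2j,n}=zq^{-1}E^f(2j,n)T_{2j-1}^{-1},
\]
using $E^fT_{2j-1}=qE^f$ (which holds for every $j\le f$ since $E_{2i-1}$ commutes with $T_{2j-1}$ for $i\ne j$ and $E^jT_{2j-1}=qE^j$ by Lemma~\ref{k2}(6)), together with Lemma~\ref{k2}(4)(5) and $E_1T_2E_1=zE_1$. Once these three identities are in hand, the quadratic relation $T_{2j-1}^{-1}=T_{2j-1}-(q-q^{-1})$ gives $q^2+qT_{2j-1}^{-1}=1+qT_{2j-1}$, and the four terms in (2) cancel in one line. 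The paper writes the computation only for $j=f$, but the same manipulations apply verbatim for every $j\le f$; no separate treatment of $j<f$ is needed.
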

\begin{proof} Since $i\ge 2f+1$, we have $$\begin{aligned} E^f E_{i, n}&=E^f T^{-1}_{1, i} T_{2, n} E_1 T_{2, n} T_{i, 1}^{-1}\\
&=T_{2f+1, i}^{-1} E^f T_{1, 2f+1}^{-1} T_{2, 2f+2} E_1 T_{2f+2, n}T_{2, n} T_{i, 1}^{-1}\\
&\equiv 0 \pmod {\mathcal B_n^{f+1}}\end{aligned}$$
where the last equivalence  (1) follows from   Lemma~\ref{k2}(1).
  Obviously, (2) follows from \eqref{zzz123} as follows:
 \begin{equation}\label{zzz123} E^f (2f-1,n)= q E^f (2f,n) T_{2f-1},\ E^f  E_{2f-1, n}=z E^f (2f, n), \ E^f E_{2f, n}=zq^{-1}  E^f (2f, n) T_{2f-1}^{-1}.\end{equation}
   By Lemma~\ref{k2}(6), $$\begin{aligned}  E^f (2f-1,n)& =E^f T_{2f-1} (2f,n) T_{2f-1}\\
  &= q E^f (2f,n) T_{2f-1}\end{aligned}$$ proving the first equality in \eqref{zzz123}.
By Lemma~\ref{k2}(5),
$$\begin{aligned} E^f E_{2f-1,n}
&=E_3E_5\dots E_{2f-1} T_{n,3} E_1 T_2 E_1 T_{2,n}T_{2f,2}^{-1} T_{2f-1,1}^{-1}\\
&=z E^f (2f, n)\\\end{aligned}$$
where the last equality follows from Definition~\ref{qqq}(3), \eqref{kkk1} and  Lemma~\ref{k2}(4).
 This proves the second equality in \eqref{zzz123}. By \eqref{eij}, Lemma~\ref{k2}(4) and the second equality in \eqref{zzz123}, we have
$$\begin{aligned}  E^f E_{2f,n}&=E^f  T_{2f-1}^{-1} E_{2f-1,n} T_{2f-1}^{-1}\\
&=zq^{-1}  E^f (2f, n) T_{2f-1}^{-1},\\
\end{aligned}$$ proving the last equality in \eqref{zzz123}.
\end{proof}
\begin{Prop} \label{ss1} Suppose $(f, \lambda)\in \Lambda_n$ and $\mu^{(k)}=\lambda\setminus p_k$ in \eqref{ppp11}, $1\le k\le a$.
 Then $$y_{\mu^{(k)}}^\lambda L_n\equiv \frac{q^{2c(p_k)}-1}{q-q^{-1}} y_{\mu^{(k)}}^\lambda \pmod {N_{k-1}},$$
 where $c(p_k)=j-i$ if $p_k$ is in $i$-th row and $j$-th column of the Young diagram $[\lambda]$.
\end{Prop}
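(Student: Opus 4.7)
The overall strategy is to reduce the computation of $y^\lambda_{\mu^{(k)}} L_n$ modulo $N_{k-1}$ to the classical Jucys--Murphy eigenvalue computation in the Hecke subalgebra $\mathcal H_{2f+1,n}$, absorbing the Brauer contributions in $L_n$ via Lemma~\ref{ttt1}. The key split is $L_n = J_n - q^2z^{-1}F_n$ with $J_n=\sum_{j=1}^{n-1}(j,n)$ and $F_n=\sum_{j=1}^{n-1}E_{j,n}$. By Lemma~\ref{ttt1}(2), the pieces with $j\le 2f$ cancel in $E^f L_n$, and by Lemma~\ref{ttt1}(1) the pieces $E^f E_{j,n}$ with $j\ge 2f+1$ lie in $\mathcal B_n^{f+1}$. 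Hence $E^fL_n\equiv E^f\bar L_n\pmod{\mathcal B_n^{f+1}}$, where $\bar L_n:=\sum_{j=2f+1}^{n-1}(j,n)$ is the Jucys--Murphy element of $\mathcal H_{2f+1,n}$ at position $n$.

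Next, one has to move $L_n$ across $x_\lambda T_{a_k,n}$ so that this simplification becomes usable. Using the alternative form \eqref{symulambda}, rewrite
\[
y^\lambda_{\mu^{(k)}} \;=\; \sum_{i=a_{k-1}+1}^{a_k} q^{a_k-i}\,T_{i,n}\,E^fx_{\mu^{(k)}},
\]
where $E^fx_{\mu^{(k)}}\in\mathcal B_{n-1}$. Since $L_n$ commutes with $\mathcal B_{n-1}$ by Lemma~\ref{k1}, one gets $y^\lambda_{\mu^{(k)}}L_n=\sum_i q^{a_k-i}T_{i,n}L_n\cdot E^fx_{\mu^{(k)}}$. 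Factoring $T_{i,n}=T_{i,n-1}T_{n-1}$ with $T_{i,n-1}\in\mathcal B_{n-1}$ (again commuting with $L_n$) and using Lemma~\ref{ln} together with $T_{n-1}^2=1+(q-q^{-1})T_{n-1}$ and $T_{n-1}-(q-q^{-1})=T_{n-1}^{-1}$, one derives the key identity
\[
T_{n-1}L_n \;=\; L_{n-1}T_{n-1}+1+(q-q^{-1})L_n - q^2z^{-1}T_{n-1}^{-1}E_{n-1,n}.
\]
Iterating this recursion and invoking Lemma~\ref{ttt1} at each step (after shifting $E^f$ to the right via \eqref{symulambda}) to kill the $E_{n-1,n}$ corrections modulo $\mathcal B_n^{f+1}\subset\mathcal B_n^{\rhd\lambda}$, the computation reduces to $E^fx_\lambda T_{a_k,n}\bar L_n$ modulo $\mathcal B_n^{\rhd\lambda}$.

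The final step is the classical Hecke algebra Jucys--Murphy eigenvalue formula in $\mathcal H_{2f+1,n}$ (see \cite{Ma}): the Murphy basis element $x_\lambda T_{a_k,n}+\mathcal H_{2f+1,n}^{\rhd\lambda}$ of the Specht module $S^\lambda$ is an eigenvector of $\bar L_n$ with eigenvalue $\frac{q^{2c(p_k)}-1}{q-q^{-1}}$, modulo terms arising from standard $\lambda$-tableaux in which $n$ sits at a removable node $p_j$ with $j<k$. After left-multiplication by $E^f$, the $\mathcal H_{2f+1,n}^{\rhd\lambda}$-contributions land in $\mathcal B_n^{\rhd\lambda}$ (zero in the cell module $C(f,\lambda)$), while the $j<k$ contributions lie in $\sum_{j<k}y^\lambda_{\mu^{(j)}}\mathcal B_{n-1}\subset N_{k-1}$ by Lemma~\ref{fiels1}. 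The desired congruence follows.

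\textbf{Main obstacle.} The central technical difficulty lies in Step~2: each application of the recursion produces a Brauer correction of the form $T_{n-1}^{-1}E_{n-1,n}$, and one must verify that every such term, once transported to a position where $E^f$ is adjacent (using \eqref{symulambda}), is indeed killed by Lemma~\ref{ttt1}(1). This is precisely why Lemma~\ref{ttt1} was engineered to cover both the index ranges $j\le 2f$ and $j\ge 2f+1$; the interplay with the alternative form of $y^\lambda_{\mu^{(k)}}$ is what keeps the iteration from leaking out of $N_{k-1}$.
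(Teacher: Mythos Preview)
Your overall strategy—split $L_n$ into its Hecke and Brauer parts, use Lemma~\ref{ttt1} to discard the Brauer parts, then invoke the classical Jucys--Murphy action on the Murphy basis of $\mathcal H_{2f+1,n}$—is exactly the paper's approach, and your first and final paragraphs match the paper's proof.

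The issue is your Step~2. You work very hard to push $L_n$ across $x_\lambda T_{a_k,n}$ via a recursive formula for $T_{n-1}L_n$, but this iteration is entirely unnecessary. The point you are missing is that $E^f$ commutes with every element of $\mathcal H_{2f+1,n}$: by Lemma~\ref{k3}(4) one has $E^f=E_1E_3\cdots E_{2f-1}$, and by Lemma~\ref{k3}(2) each $T_i$ with $i\ge 2f+1$ commutes with each $E_{2j-1}$, $1\le j\le f$. Since $x_\lambda T_{a_k,n}\in\mathcal H_{2f+1,n}$, this gives
\[
y^\lambda_{\mu^{(k)}}(L_n-\bar L_n)
= E^f\, x_\lambda T_{a_k,n}\,(L_n-\bar L_n)
= x_\lambda T_{a_k,n}\,E^f(L_n-\bar L_n)\in\mathcal B_n^{f+1},
\]
the last membership being precisely what you established in your first paragraph from Lemma~\ref{ttt1}. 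This is the one-line justification of \eqref{ddddt} in the paper. Even within your own framework using \eqref{symulambda}, the same trick works: after commuting $L_n$ past $E^f x_{\mu^{(k)}}\in\mathcal B_{n-1}$ you have $T_{i,n}\,E^f(L_n-\bar L_n)\,x_{\mu^{(k)}}$ term-by-term, again in $\mathcal B_n^{f+1}$, with no recursion needed. Your iteration is not wrong, but it is fragile (the branching from the $(q-q^{-1})L_n$ term makes the bookkeeping delicate and you do not fully carry it out), and it obscures the simple reason the reduction holds.
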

\begin{proof} Thanks to  Lemma~\ref{ttt1}, we immediately have \begin{equation}\label{ddddt} y_{\mu^{(k)}}^\lambda L_n\equiv y_{\mu^{(k)}}^\lambda \sum_{j=2f+1}^{n-1}(j, n) \pmod {\mathcal B_n^{f+1}}.\end{equation}
Now, the result follows from \eqref{ddddt} and well-known
results on the action of Jucys-Murphy elements for  Hecke algebras  in \cite[Theorem~3.22]{Ma}\footnote{ The  Hecke algebra $\mathcal H_n$ is defined via
$(T_i-q)(T_i+1)=0$ in \cite{Ma}.
 Our $q^{-1} T_i$ (resp., $q^2$) is $T_i$ (resp., $q$) in \cite{Ma}.}.
\end{proof}

\begin{Lemma}\label{tt1} Suppose $(f, \lambda)\in \Lambda_n$. We have  $$ E^f x_\lambda T_{n,2f}^{-1} x\equiv 0\pmod{N_a}$$ if
$x\in \{ (i, n)-q\delta_{2f-1, i},  E_{j, n}-z^{-1} (2f-1, j)^{-1}, E_{2f-1, n}-\delta\}$
and   $2f-1\le i\le n-1$, $2f-1< j\le n-1$.
\end{Lemma}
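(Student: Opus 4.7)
The plan is to establish each of the three equivalences by a direct computation in $\mathcal B_n$. By Lemma~\ref{fiels1}(3), the submodule $N_a$ of $C(f,\lambda)$ is spanned modulo $\mathcal B_n^{\rhd \lambda}$ by the basis elements $E^f x_\lambda T_{d(\s)}T_v$ with $v\in \mathcal D_{f,n-1}$, i.e., with $v$ fixing the position $n$; and since $\mathcal B_n^{f+1} \subseteq \mathcal B_n^{\rhd \lambda} \subseteq N_a$, any term landing in $\mathcal B_n^{f+1}$ can be discarded. The goal is therefore to rewrite $E^f x_\lambda T_{n,2f}^{-1} x$ as a combination of such basis elements modulo $\mathcal B_n^{\rhd \lambda}$.

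For $x = (i, n) - q\delta_{2f-1, i}$ with $2f-1 \le i \le n-1$, I expand $(i,n) = T_{i,n-1} T_{n-1} T_{n-1,i}$ and compute $T_{n,2f}^{-1}(i,n)$ using the braid and quadratic relations in~\eqref{kkk1}. When $i = 2f-1$, after propagating factors past $E^f$ via Definition~\ref{qqq}(4) and the commutations in Lemma~\ref{k3}(2), the product collapses via Lemma~\ref{k2}(6) ($E^f T_{2f-1} = q E^f$) and yields $q E^f x_\lambda T_{n,2f}^{-1}$, cancelling the $q$ in $x$. When $i \ge 2f$, the same braid reduction produces a linear combination of $E^f x_\lambda T_{d(\s)}T_v$ with $v$ fixing $n$, so each summand already lies in $N_a$.

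For $x = E_{2f-1, n} - \delta$ and $x = E_{j,n} - z^{-1}(2f-1, j)^{-1}$ with $j > 2f-1$, I expand $E_{j, n} = T_{1, j}^{-1} T_{n, 2} E_1 T_{2, n} T_{j, 1}^{-1}$ from~\eqref{eij} and propagate the outer $T$-factors through $E^f x_\lambda T_{n,2f}^{-1}$ using braid relations and Lemma~\ref{k3}(2). The central factor $T_{n,2} E_1 T_{2,n}$ feeds into the chain of $E$-identities in Lemmas~\ref{k2} and~\ref{k3} (especially the relation $E_i T_{i+1}^{\pm 1} E_i = z^{\pm 1} E_i$ in Lemma~\ref{k3}(5)). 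For $j = 2f - 1$ the collapse produces the scalar $\delta$ via Definition~\ref{qqq}(1), whereas for $j > 2f - 1$ it produces the factor $z^{-1}(2f-1, j)^{-1}$. In both cases the higher-order corrections from the quadratic relation land in $\mathcal B_n^{f+1}$ and are absorbed, similarly to the treatment in Lemma~\ref{ttt1}.

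The principal obstacle is Case~1 with $i \ge 2f$: after braid manipulation of $T_{n,2f}^{-1}(i,n)$, the resulting element must be identified explicitly as a sum of basis elements with $v \in \mathcal D_{f, n-1}$. This requires carefully splitting the permutation factors between $T_{d(\s)}$ (which alters the shape of $\s$) and $T_v$ (leaving $n$ fixed), and using the Hecke identities $x_\lambda T_k = q x_\lambda$ for $s_k \in \mathfrak S_\lambda$ to absorb the appropriate factors. In the $E$-cases the analogous difficulty is structurally simpler but still demands careful tracking of the quadratic-relation corrections through the $T$-cascade to ensure that they end up in $\mathcal B_n^{f+1}$.
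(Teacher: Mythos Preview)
Your overall strategy coincides with the paper's: direct computation of $E^f x_\lambda T_{n,2f}^{-1} x$ for each choice of $x$, followed by an appeal to Lemma~\ref{fiels1}(3). However, there is a genuine gap in the execution. In the case $i=2f-1$ you claim the product collapses to $q\,E^f x_\lambda T_{n,2f}^{-1}$, which would cancel the $q$-term in $x$ exactly. In fact the braid computation yields
\[
E^f x_\lambda T_{n,2f}^{-1}(2f-1,n)=q\,E^f x_\lambda\,T_{2f,n},
\]
with the \emph{non-inverted} product $T_{2f,n}=T_{2f}T_{2f+1}\cdots T_{n-1}$ on the right. The same phenomenon occurs in each of the $E$-computations: after applying Lemmas~\ref{k2} and~\ref{k3} one obtains, up to the stated scalar, $E^f x_\lambda\,T_{2f,n}$ rather than $E^f x_\lambda\,T_{n,2f}^{-1}$. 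The lemma therefore does not follow until one proves the auxiliary fact
\[
E^f x_\lambda\,T_{2f,n}\;-\;E^f x_\lambda\,T_{n,2f}^{-1}\ \in\ N_a,
\]
which the paper records as \eqref{invkey}. This is established by expanding $T_{2f,n}$ via the quadratic relation and recognising each correction term as lying in $N_a$ by Lemma~\ref{fiels1}(3); the corrections do \emph{not} land in $\mathcal B_n^{f+1}$, contrary to what you assert. Without this bridging identity your argument leaves the $i=2f-1$ case, and hence also the two $E$-cases, unfinished.
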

\begin{proof}Thanks to Lemma~\ref{k2},
$$E^f x_\lambda T_{n, 2f}^{-1} (i, n)=\begin{cases} x_\lambda E^f  T_{i+1, n} T_{i, 2f}^{-1} &\text{if $i\neq 2f-1$,}\\ q E^f x_\lambda T_{2f, n} &\text{if $i=2f-1$.}
\end{cases}$$
By  Definition~\ref{deofymulambda} and Lemma~\ref{fiels1}(3),  \begin{equation}\label{cass1} E^f x_\lambda T_{n, 2f}^{-1} (i, n)\in N_a \text{  if $i\neq 2f-1$.}\end{equation}  In the remaining case,
 using the quadratic relation in \eqref{kkk1} to rewrite  $T_{2f, n}$ and using Lemma~\ref{fiels1}(3) again, we have  \begin{equation}\label{invkey}
E^f x_\lambda T_{2f, n}-E^f x_\lambda T_{n, 2f}^{-1}\in {N_a}.\end{equation}
When $x=(i, n)-q\delta_{2f-1, i}$, the result follows from \eqref{cass1}--\eqref{invkey}. Similarly, by  Lemma~\ref{k2}(4)(5), Lemma~\ref{k3}(7), and Definition~\ref{qqq}(1)(2) we have
\begin{equation}\label{cas1} \begin{aligned} E^f x_\lambda T_{n, 2f}^{-1} E_{2f-1, n}
&=E^f x_\lambda T_{1, 2f-1}^{-1} T_{ 2f, 2}  E_1 T_{2, n} T_{2f-1, 1}^{-1}\\ & =
\delta E^f x_\lambda T_{2, 2f} T_{ 2f-1,1}^{-1}   T_{2f, n} \\
&=\delta E^f x_\lambda  T_{2f, n},\\
\end{aligned}\end{equation}
and
\begin{equation}\label{cas2} \begin{aligned} E^f  T_{n, 2f}^{-1} E_{2f, n} &
=E^f  T_{1, 2f+1}^{-1}T_{2f+1,2}E_1 T_{2,n} T_{2f, 1}^{-1}\\
& =E^f T_{2f,1} T_{1, 2f+1}^{-1}E_1 T_{2,n} T_{2f, 1}^{-1}
\\ & =E^f T_{2f-1,1} T_{2, 2f+1}^{-1}E_1 T_{2,n} T_{2f, 1}^{-1}
\\ & =E^f T_{2f}^{-1} T_{2f-1,1} T_{2, 2f}^{-1}E_1 T_{2,n} T_{2f, 1}^{-1}\\ &
=E^f T_{2f}^{-1} E_{2f-1} T_{2f-1,1} T_{2, 2f}^{-1} T_{2,n} T_{2f, 1}^{-1}
\\ & =z^{-1} E^f  T_{2f-1,1}  T_{2,2f}^{-1} T_{2, n}T_{2f, 1}^{-1}\\ &=z^{-1} E^f  T_{2f, n} T_{2f-1}^{-1},
\\ \end{aligned} \end{equation}
and
\begin{equation}\label{cas3} \begin{aligned} E^f T_{n, 2f}^{-1} E_{j, n} &
=E^f T_{j+1, 2f}^{-1} T_{1, j}^{-1} T_{n, j+1}^{-1}  T_{n,2}E_1 T_{2,n} T_{j, 1}^{-1}\\
& =E^f  T_{j+1, 2f}^{-1} T_{1, j}^{-1} T_{j+1, 2} E_1 T_{2,n}   T_{j, 1}^{-1}\\
& =E^f T_{ 2f+1, j+1}^{-1}  T_{ j+1, 2f}^{-1}  T_{1, 2f}^{-1} T_{j+1, 2} E_1 T_{2,n} T_{j, 1}^{-1}\\ &
=E^f T_{2f+1, j+1}^{-1} T_{1, 2f+1}^{-1} T_{2f+1, 2} E_1 T_{2,n} T_{j, 1}^{-1}\\ &
=E^f    T_{1, j+1}^{-1} T_{2f+1, 2}   E_1 T_{2,n} T_{j, 1}^{-1}\\ &
=E^f T_{2f, 1}  T_{1, j+1}^{-1}    E_1 T_{2,n} T_{j, 1}^{-1}\\
&=E^f T_{2f-1, 1}  T_{2, j+1}^{-1}    E_1 T_{2,n} T_{j, 1}^{-1}\\ &
=E^f  T_{2f, j+1}^{-1} T_{2f-1, 1}    T_{2, 2f}^{-1}    E_1 T_{2,n} T_{j, 1}^{-1}
\\
&= E^f  T_{2f, j+1}^{-1} E_{2f-1} T_{2f-1, 1}    T_{2, 2f}^{-1}   T_{2,n} T_{j, 1}^{-1}\\ &
= z^{-1} E^f T_{2f-1, 1} T_{2,2f}^{-1}   T_{2f+1, j+1}^{-1}    T_{2,n} T_{j, 1}^{-1}\\
&=z^{-1} E^f  T_{2,n} T_{2f, j}^{-1} T_{j, 1}^{-1}\\ &
=z^{-1} E^f T_{2, 2f} T_{2f, n}  T_{j, 1}^{-1} T_{2f-1, j-1 }^{-1} \\
&=z^{-1} E^f  T_{2f-1, 1}T_{2f, n} T_{j,1}^{-1} T_{2f-1, j-1}^{-1}\\ &
=z^{-1} E^f T_{2f, n} T_{j, 2f-1}^{-1}  T_{2f-1, j-1}^{-1}
\\
&=z^{-1} E^f T_{2f, n} (2f-1, j)^{-1}.
\end{aligned} \end{equation}
When  $x= E_{j, n}-z^{-1} (2f-1, j)^{-1}$, the result follows from \eqref{invkey} and  \eqref{cas2}-\eqref{cas3}. In the remaining case, the result follows from   \eqref{invkey}--\eqref{cas1}.
\end{proof}

\begin{Prop} \label{sss1}Suppose $(f, \lambda)\in \Lambda_n$ and   $\mu^{(k)}=\lambda\cup p_k$, $a< k\le m$. Then   $$y_{\mu^{(k)}}^\lambda L_n\equiv (q-q^{-1})^{-1} ({z^{-2} q^{2-2c(p_k)}-1})
  y_{\mu^{(k)}}^\lambda \pmod {N_{k-1}}.$$
\end{Prop}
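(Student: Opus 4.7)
The proof should parallel that of Proposition~\ref{ss1} but with an extra layer of complication from the cap factor $T_{n,2f}^{-1}$ in $y^\lambda_{\mu^{(k)}}$. My plan is to use \eqref{symulambda} to write $y^\lambda_{\mu^{(k)}}=E^f x_\lambda T_{n,2f}^{-1}h$ where $h=T_{b_k,2f-1}^{-1}\sum_{i=b_{k-1}+1}^{b_k}q^{b_k-i}T_{b_k,i}\in\mathcal H_{n-1}\subseteq\mathcal B_{n-1}$, invoke Lemma~\ref{k1} ($L_n$ commutes with $\mathcal B_{n-1}$), and reduce, modulo $N_{k-1}$ (which contains $N_a$ and is right $\mathcal B_{n-1}$-stable), to computing $E^f x_\lambda T_{n,2f}^{-1}L_n$ modulo $N_a$ and then right-multiplying by $h$.

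Expanding $L_n=\sum_{j=1}^{n-1}\bigl((j,n)-q^2z^{-1}E_{j,n}\bigr)$ and processing the index range $j\in[2f-1,n-1]$ via Lemma~\ref{tt1} (then using $\delta=(z-z^{-1})/(q-q^{-1})$) yields
\[
E^f x_\lambda T_{n,2f}^{-1}\sum_{j=2f-1}^{n-1}\bigl((j,n)-q^2z^{-1}E_{j,n}\bigr)\equiv \tfrac{q^2z^{-2}-1}{q-q^{-1}}\,E^f x_\lambda T_{n,2f}^{-1}-q^2z^{-2}\,E^f x_\lambda T_{n,2f}^{-1}\sum_{j=2f}^{n-1}(2f-1,j)^{-1} \pmod{N_a}.
\]
For the low-index range $j\in[1,2f-2]$, I would pair the summands $(2i-1,n),(2i,n),E_{2i-1,n},E_{2i,n}$ for $1\le i\le f-1$; Lemma~\ref{ttt1}(2) gives $E^f\bigl((2i-1,n)+(2i,n)-q^2z^{-1}(E_{2i-1,n}+E_{2i,n})\bigr)=0$, and careful commutation through $x_\lambda T_{n,2f}^{-1}$ (using the braid relations and Definition~\ref{qqq}(4), with stray $E^{f+1}$-terms absorbed in $\mathcal B_n^{f+1}\subseteq N_a$) should kill each pair, leaving no contribution from this range.

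It remains to evaluate $-q^2z^{-2}E^f x_\lambda T_{n,2f}^{-1}\sum_{j=2f}^{n-1}(2f-1,j)^{-1}h$ modulo $N_{k-1}$. The plan is to replace $T_{n,2f}^{-1}$ by $T_{2f,n}$ via \eqref{invkey} and use length additivity together with the conjugation identity $s_{2f,n}^{-1}(2f-1,j)s_{2f,n}=(2f-1,j-1)$ for $j>2f$ (and $(2f-1,n)$ for $j=2f$) to rewrite the sum, up to sign and conjugation, as a Jucys-Murphy element of $\mathcal H_{2f+1,n-1}$ acting on $x_{\mu^{(k)}}$. The Hecke formula \cite[Theorem~3.22]{Ma} then gives eigenvalue $\frac{1-q^{-2c(p_k)}}{q-q^{-1}}$ on the representative indexed by the added node $p_k$, modulo higher-dominance terms in $\mathcal H^{\rhd\mu^{(k)}}_{2f+1,n-1}$ (which contribute to $N_{k-1}$). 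Combining with the scalar part from the high-index range produces
\[
\tfrac{q^2z^{-2}-1}{q-q^{-1}}-q^2z^{-2}\cdot\tfrac{1-q^{-2c(p_k)}}{q-q^{-1}}=\tfrac{z^{-2}q^{2-2c(p_k)}-1}{q-q^{-1}},
\]
matching the claimed eigenvalue. The main obstacle will be the low-index vanishing step: Lemma~\ref{tt1} offers no direct help for $j\le 2f-2$, so one must rely on Lemma~\ref{ttt1}(2) together with involved commutation arguments; the Hecke-tail identification is also nontrivial but is essentially a familiar Hecke-algebra calculation once the length-additivity is verified.
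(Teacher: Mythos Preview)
Your overall strategy matches the paper's: write $y_{\mu^{(k)}}^\lambda=E^f x_\lambda T_{n,2f}^{-1}h$ with $h\in\mathcal B_{n-1}$, commute $L_n$ past $h$ via Lemma~\ref{k1}, kill the low-index pairs $j\le 2f-2$ by Lemma~\ref{ttt1}(2), reduce the range $2f-1\le j\le n-1$ via Lemma~\ref{tt1}, and arrive at the scalar $(q^2z^{-2}-1)/(q-q^{-1})$ together with the residual sum $-q^2z^{-2}\sum_{j=2f}^{n-1}(2f-1,j)^{-1}$. Up to this point the arguments coincide, and your concern about the low-index step is reasonable but the paper treats it exactly as you propose (cite Lemma~\ref{ttt1}(2) and move on).

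The genuine gap is in your Hecke tail. Conjugating $\sum_{j=2f}^{n-1}(2f-1,j)^{-1}$ by $s_{2f,n}$ does \emph{not} produce a Jucys--Murphy element of $\mathcal H_{2f+1,n-1}$: the conjugated transpositions are $(2f-1,j')^{\pm1}$, which still carry the index $2f-1$ and hence do not lie in $\mathcal H_{2f+1,n-1}$ at all. Even viewed inside $\mathcal H_{2f-1,n-1}$ this sum is a ``first-position'' element, not a standard JM element, and its eigenvalue on the highest-weight vector of $S^{\mu^{(k)}}$ is not simply the content of the added box $p_k$. So the step where you extract the single eigenvalue $\frac{1-q^{-2c(p_k)}}{q-q^{-1}}$ is not justified by the conjugation you describe.

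The paper's device is different and is what makes the argument close. It introduces two \emph{central} sums
\[
x=\sum_{2f-1\le i<j\le n-1}(i,j)^{-1}\in Z(\mathcal H_{2f-1,n-1}),\qquad
x'=\sum_{2f+1\le i<j\le n}(i,j)^{-1}\in Z(\mathcal H_{2f+1,n}),
\]
and rewrites, modulo $N_a$,
\[
E^f x_\lambda T_{n,2f}^{-1}\sum_{j=2f}^{n-1}(2f-1,j)^{-1}\;\equiv\; E^f x_\lambda T_{n,2f}^{-1}\,x\;-\;E^f x_\lambda\, x'\,T_{n,2f}^{-1}.
\]
The point is that $x'$ commutes with $x_\lambda$ (central in $\mathcal H_{2f+1,n}$), so by \cite[Theorem~3.22]{Ma} one has $E^f x_\lambda x'\equiv\bigl(\sum_{p\in[\lambda]}q^{-c(p)}[c(p)]\bigr)E^f x_\lambda$ modulo $\mathcal B_n^{\rhd\lambda}$; and $x$ commutes with $h$ (central in $\mathcal H_{2f-1,n-1}$), so after right-multiplying by $h$ one gets $y_{\mu^{(k)}}^\lambda x\equiv\bigl(\sum_{p\in[\mu^{(k)}]}q^{-c(p)}[c(p)]\bigr)y_{\mu^{(k)}}^\lambda$ modulo $N_{k-1}$. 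The difference of these two content sums is exactly $q^{-c(p_k)}[c(p_k)]=\frac{1-q^{-2c(p_k)}}{q-q^{-1}}$, which is the number you wanted but could not isolate by direct conjugation. Replace your conjugation step with this $x,x'$ trick and the rest of your outline goes through.
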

\begin{proof}
 Thanks to Lemma~\ref{ttt1}(2) and  Lemma~\ref{tt1}, we have
 \begin{equation}\label{sss111} \begin{aligned} E^f x_\lambda T_{n,2f}^{-1} L_n & =   E^f x_\lambda T_{n,2f}^{-1}
\left(\sum_{2f-1\le i\le n-1} (i, n)-q^2z^{-1} E_{i, n}\right)\\ &\equiv E^f x_\lambda T_{n,2f}^{-1}
 (q -q^2z^{-1} \sum_{2f-1\le i\le n-1}  E_{i,n})
 \\
 &\equiv E^f x_\lambda T_{n,2f}^{-1} (q-q^2z^{-1}\delta -q^2z^{-2} \sum_{2f\le j\le n-1} (2f-1, j)^{-1})\\ & \equiv -q^2 z^{-2}
( E^f x_\lambda T_{n, 2f}^{-1}x-E^f x_\lambda x' T_{n, 2f}^{-1} )+(q-q^2z^{-1}\delta)
 E^f x_{\lambda} T_{n, 2f}^{-1} \pmod {N_a},
 \end{aligned}\end{equation}
 where $$x'=\sum_{2f+1\le i<j\le n} (i, j)^{-1}\ \text{  and  } \ x=\sum_{2f-1\le i<j\le n-1} (i, j)^{-1}.$$
 Write $[k]=\frac{q^k-q^{-k}}{q-q^{-1}}$ for any $k\in \mathbb Z$.
By \cite[Theorem~3.22]{Ma}\footnote{The Jucys-Murphy elements of the Hecke algebra in \cite{Ma} were  constructed via  $(i,j)$.
	When we compute \eqref{u1},
	the arguments in the proof of \cite[Theorem~3.22]{Ma} still work but the coefficients now are obtained from those in \cite{Ma} by replacing  $q$ with $q^{-1}$.},
\begin{equation}\label{u1} E^f
 x_\lambda x'- \sum_{p\in [\lambda]}q^{-c(p)}[c(p)]  E^f x_\lambda \in { \mathcal B_n^{\rhd\lambda}},  \ \
y_{\mu^{(k)} }^\lambda x - \sum_{p\in [\mu^{(k)}]} q^{-c(p)}[c(p)] y_{\mu^{(k)} }^\lambda \in  {N_{k-1}}.\end{equation}
Thanks to \eqref{sss111}-\eqref{u1},
$$\begin{aligned} y_{\mu^{(k)} }^\lambda L_n& \equiv (q-q^2z^{-1} \delta-q^2z^{-2} \frac{1-q^{-2c(p_k)}}{q-q^{-1}}) E^f x_\lambda T_{n,2f}^{-1}h \\ & \equiv
\frac{z^{-2} q^{2-c(p_k)}-1}{q-q^{-1}} y_{\mu^{(k)} }^\lambda  \pmod {N_{k-1}}\end{aligned}$$
 where  $h\in \mathcal B_{n-1}$ such that  $y_{\mu^{(k)} }^\lambda=E^f x_\lambda T_{n,2f}^{-1}h $ (see \eqref{symulambda}).

\end{proof}
\begin{Defn}  Suppose $(f, \lambda)\in \Lambda_n$. For any $\mathbf t\in \mathscr T_n^{ud}(\lambda)$ and $1\le k\le n$, define
$$c_{\mathbf t}(k)=\begin{cases}  \frac{q^{2c(p)}-1}{q-q^{-1}} & \text{if $\mathbf t_k=\mathbf t_{k-1}\cup p$,}\\
\frac{z^{-2} q^{2-c(p)}-1}{q-q^{-1}}& \text{if $\mathbf t_k=\mathbf t_{k-1}\setminus p$.}\\
\end{cases}$$
\end{Defn}

\begin{Theorem}\label{trishactio} Suppose $(f, \lambda)\in \Lambda_n$.
For any $\mathbf t\in\mathscr T_n^{ud}(\lambda)$ and $1\leq k\leq n$,
$$\mathrm m_{\mathbf t} L_k= c_{\mathbf t}(k) \mathrm m_{\mathbf t} +\sum_{\mathbf s\overset k \succ\mathbf t}a_\mathbf s
\mathrm m_{\mathbf s} $$ for some scalars $a_\mathbf s\in\Bbbk$.
\end{Theorem}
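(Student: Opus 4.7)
The plan is to prove the theorem by induction on $n$. The base case $n = 2$ reduces to a direct verification, since $L_1 = 0$ and $L_2 = T_1 - q^2 z^{-1}E_1$ acts on the cell modules in the expected way by explicit computation. For the inductive step, fix $(f,\lambda) \in \Lambda_n$ and $\mathbf t \in \mathscr T_n^{ud}(\lambda)$, and write $\mu = \mathbf t_{n-1} = \mu^{(j)}$ in the ordering of~\eqref{ppp11}. The argument then splits according to whether $k = n$ or $k < n$.

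For $k = n$: Definition~\ref{murr} together with~\eqref{mpbb} writes $\mathrm m_\mathbf t = y_{\mathbf t_{n-1}}^\lambda b_{\mathbf t_{n-1}}$ with $b_{\mathbf t_{n-1}} \in \mathcal B_{n-1}$. By Lemma~\ref{k1}, $L_n$ centralises $\mathcal B_{n-1}$, so
$$\mathrm m_\mathbf t L_n \;=\; (y_{\mathbf t_{n-1}}^\lambda L_n)\, b_{\mathbf t_{n-1}}.$$
Propositions~\ref{ss1} and~\ref{sss1} (according to whether $\mathbf t_n = \mathbf t_{n-1}\cup p$ or $\mathbf t_n = \mathbf t_{n-1}\setminus p$) then give
$y_{\mathbf t_{n-1}}^\lambda L_n \equiv c_\mathbf t(n)\, y_{\mathbf t_{n-1}}^\lambda \pmod{N_{j-1}}$, where the scalar matches the definition of $c_\mathbf t(n)$ precisely. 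Since $N_{j-1}$ is a right $\mathcal B_{n-1}$-submodule, right-multiplying by $b_{\mathbf t_{n-1}}$ keeps the error inside $N_{j-1} = M_{j-1}$, which by Theorem~\ref{murphyb}(2)(3) is spanned by those $\mathrm m_\mathbf s$ with $\mathbf s_{n-1} \rhd \mathbf t_{n-1}$; any such $\mathbf s$ agrees with $\mathbf t$ at position $n$ and strictly dominates at position $n-1$, giving the required upper-triangular expansion.

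For $k < n$: observe $L_k \in \mathcal B_{k} \subseteq \mathcal B_{n-1}$, so $L_k$ preserves each $M_i$ in the filtration of Theorem~\ref{murphyb}(2), and its action descends to the quotients. Under the $\mathcal B_{n-1}$-isomorphism $M_j/M_{j-1} \cong C(\ell, \mu^{(j)})$ of Theorem~\ref{murphyb}(4), $\mathrm m_\mathbf t + M_{j-1}$ maps to $\mathrm m_{\mathbf t \downarrow_{n-1}}$. Since $\mathbf t \downarrow_{n-1}\in \mathscr T_{n-1}^{ud}(\mu^{(j)})$, the induction hypothesis on $n-1$ applied inside $C(\ell,\mu^{(j)})$ yields
$$\mathrm m_{\mathbf t \downarrow_{n-1}} L_k \;=\; c_{\mathbf t\downarrow_{n-1}}(k)\, \mathrm m_{\mathbf t \downarrow_{n-1}} + \sum_{\mathbf u \overset k \succ \mathbf t\downarrow_{n-1}} a_\mathbf u\, \mathrm m_\mathbf u,$$
and $c_{\mathbf t\downarrow_{n-1}}(k) = c_\mathbf t(k)$ because only the first $k$ steps matter. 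Each such $\mathbf u$ lifts uniquely to $\mathbf u^+ \in \mathscr T_n^{ud}(\lambda)$ by appending the last step $\mu^{(j)} \to \lambda$ of $\mathbf t$; then $\mathbf u^+ \overset k\succ \mathbf t$ in the strict sense of Definition of the partial order $\preceq$. Lifting back produces the required expansion modulo $M_{j-1}$, and any residual contribution inside $M_{j-1}$ is automatically a combination of basis vectors $\mathrm m_\mathbf s$ with $\mathbf s_{n-1}\rhd \mathbf t_{n-1}$, hence $\mathbf s \succ \mathbf t$ in the partial order as required.

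The routine verifications are the commutation $L_n \leftrightarrow \mathcal B_{n-1}$ (already established) and the equality $c_{\mathbf t\downarrow_{n-1}}(k) = c_\mathbf t(k)$ for $k < n$, which is immediate from the definition of the residues. The chief technical step, and the place where the preceding work in the paper concentrates, is the $k = n$ case: one must show $y_\mu^\lambda L_n$ is a scalar multiple of $y_\mu^\lambda$ modulo lower-order terms in the filtration, and that scalar is the correct residue. This is exactly the content of Propositions~\ref{ss1} and~\ref{sss1}, whose computations use Lemma~\ref{tt1} to eliminate the non-diagonal contributions coming from $E_{j,n}$ and $(j,n)$. Once these are in hand, the passage from $y_\mu^\lambda$ to $\mathrm m_\mathbf t$ is purely formal via Lemma~\ref{k1}, and the filtration argument of Theorem~\ref{murphyb} takes care of the inductive step for $k < n$.
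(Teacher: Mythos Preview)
Your proof is correct and follows essentially the same approach as the paper's: induction on $n$, handling $k<n$ via the $\mathcal B_{n-1}$-isomorphism of Theorem~\ref{murphyb}(4) together with the inductive hypothesis, and handling $k=n$ via Propositions~\ref{ss1} and~\ref{sss1} after commuting $L_n$ past $b_{\mathbf t_{n-1}}$ using Lemma~\ref{k1}. You have in fact supplied more detail than the paper's own proof, which simply cites Theorem~\ref{murphyb}(3)(4) and Propositions~\ref{ss1},~\ref{sss1} without spelling out the commutation step or the lifting of $\mathbf u$ to $\mathbf u^+$.
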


\begin{proof}We have $\mathrm m_{\mathbf t} L_1=0$. Suppose $k\ge 2$.
By Theorem~\ref{murphyb}(4) and induction assumption for $n-1$,
we have the required formulae on $\mathrm m_{\mathbf t} L_k$, $1\leq k\leq n-1$.
When $k=n$, the required formula  follows from
Propositions~\ref{ss1},~\ref{sss1} and Theorem~\ref{murphyb}(3)(4).
\end{proof}

  By Theorem~\ref{trishactio},
  $\{L_1, L_2, \ldots, L_n\}$ are JM elements with respect to the
  Jucys-Murphy basis of any cell module $C(f, \lambda)$ in the sense of
   \cite[Definition~2.4]{MA1}.

Now we assume that $\mathcal B_n$ is defined over $F$. It follows from \cite[Definition~2.3]{GL} that there is a canonical invariant form, say $\psi$ on each cell module $C(f, \lambda)$. Let $$D(f, \lambda)=C(f, \lambda)/\text{rad} C(f, \lambda),$$ where $\text{rad} C(f, \lambda)$ is the radical of
$C(f, \lambda)$ with respect to $\psi$. Then all non-zero $D(f, \lambda)$ consist of all pair-wise non-isomorphic simple $\mathcal B_n$-modules~\cite[Theorem~3.4]{GL}. It follows from \cite[Theorem~4.1]{N21} that $D(f, \lambda)\neq 0$ if and only if $\lambda\in \Lambda_e^+(n-2f)$.

\begin{Cor}\label{corw} Suppose $(f, \lambda)\in \Lambda_n$, $\mu \in \Lambda_e^+(n)$  and $[C(f, \lambda): D(0, \mu)]\neq 0$.
Then  there are $(\mathbf s, \mathbf t)\in
\mathscr T_n^{ud}(\mu)\times \mathscr T_n^{ud}(\lambda)$ such that $c_{\mathbf s}(k)=c_{\mathbf t}(k)$ for all $1\le k\le n$.
\end{Cor}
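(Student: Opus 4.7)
The plan is to use the standard Jucys--Murphy eigenvalue argument. Theorem~\ref{trishactio} implies that on the Jucys--Murphy basis of any cell module $C(g,\nu)$ the commuting operators $L_1,\ldots,L_n$ act by upper-triangular matrices with diagonal entries $c_{\mathbf u}(k)$ for $\mathbf u\in\mathscr T_n^{ud}(\nu)$; in particular every $L_k$ has all its eigenvalues on $C(g,\nu)$, and hence on any subquotient, lying in the finite subset $\{c_{\mathbf u}(k):\mathbf u\in\mathscr T_n^{ud}(\nu)\}\subset F$.

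For any finite-dimensional $\mathcal B_n$-module $V$ on which each $L_k$ acts with eigenvalues in $F$, let $\mathrm{Spec}(V)\subseteq F^n$ denote the set of tuples $(a_1,\ldots,a_n)$ admitting a simultaneous generalized eigenvector, that is, a nonzero $v\in V$ with $v(L_k-a_k)^N=0$ for all $k$ and some $N\geq 1$. Two elementary properties will be used: $\mathrm{Spec}(V)\neq\emptyset$ whenever $V\neq 0$, because each $L_k$'s characteristic polynomial splits over $F$ and the $L_k$ commute; and $\mathrm{Spec}(U)\cup\mathrm{Spec}(V/U)\subseteq\mathrm{Spec}(V)$ for any submodule $U\subseteq V$, by lifting a generalized eigenvector across a short exact sequence. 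Combined with Theorem~\ref{trishactio}, these give
\begin{equation*}
\mathrm{Spec}(C(f,\lambda))\subseteq \{(c_{\mathbf t}(k))_{k=1}^n:\mathbf t\in\mathscr T_n^{ud}(\lambda)\}, \qquad \mathrm{Spec}(C(0,\mu))\subseteq \{(c_{\mathbf s}(k))_{k=1}^n:\mathbf s\in\mathscr T_n^{ud}(\mu)\}.
\end{equation*}

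To finish I would combine the two hypotheses. The assumption $[C(f,\lambda):D(0,\mu)]\neq 0$ produces submodules $N'\subsetneq N\subseteq C(f,\lambda)$ with $N/N'\cong D(0,\mu)$, so $\mathrm{Spec}(D(0,\mu))\subseteq\mathrm{Spec}(C(f,\lambda))$. On the other hand $D(0,\mu)$ is a quotient of $C(0,\mu)$, so $\mathrm{Spec}(D(0,\mu))\subseteq\mathrm{Spec}(C(0,\mu))$. Since $D(0,\mu)\neq 0$, $\mathrm{Spec}(D(0,\mu))$ is non-empty, and any tuple $(a_1,\ldots,a_n)$ in it is simultaneously realized as $(c_{\mathbf t}(k))_k$ for some $\mathbf t\in\mathscr T_n^{ud}(\lambda)$ and as $(c_{\mathbf s}(k))_k$ for some $\mathbf s\in\mathscr T_n^{ud}(\mu)$, yielding the desired equality $c_{\mathbf t}(k)=c_{\mathbf s}(k)$ for $1\leq k\leq n$.

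The whole argument is formal once Theorem~\ref{trishactio} is in hand, so I do not expect any serious obstacle; the only technical point is the elementary additivity of joint generalized-eigenvalue sets on short exact sequences, which is standard given that the minimal polynomials of the $L_k$'s split over $F$.
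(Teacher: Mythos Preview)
Your argument is correct and is essentially the same as the paper's: the paper's proof simply says to restrict $C(f,\lambda)$ and $D(0,\mu)$ to the commutative subalgebra $L$ generated by $L_1,\ldots,L_n$ and invoke Theorem~\ref{trishactio}, which is exactly your joint generalized-eigenvalue argument spelled out in detail. The only point worth noting is that all the $L_k$ are simultaneously upper-triangular with respect to a single total order refining the partial order $\prec$ (since each $\overset{k}{\prec}$ implies $\prec$), which is what makes the diagonal tuples $(c_{\mathbf t}(k))_k$ the complete joint spectrum; you use this implicitly and it is immediate from the statement of Theorem~\ref{trishactio}.
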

\begin{proof}
Let $L$ be the  abelian subalgebra of $\mathcal B_n$ generated by $L_1, L_2, \ldots, L_n$.
 Restricting both $C(f, \lambda)$ and $ D(0, \mu)$ to $L$ and using
   Theorem~\ref{trishactio} yields the result as required.  \end{proof}

\section{Restriction and Induction functors} In this section, we consider the $q$-Brauer algebra   over a field $F$.
 Recall the algebra homomorphism
$\phi: \mathcal B_{n-2}\rightarrow \tilde E_1 \mathcal B_{n} \tilde E_1$ in Theorem~\ref{iso111}. Identifying $\mathcal B_{n-2}$ with $ \tilde E_1 \mathcal B_{n} \tilde E_1$ via the algebra isomorphism $\phi$, we have
two functors $\mathcal F: \mathcal B_{n}\text{-mod}\rightarrow \mathcal B_{n-2}\text{-mod}$ and
 $\mathcal G: \mathcal B_{n-2}\text{-mod}\rightarrow \mathcal B_{n}\text{-mod}$ such that
 \begin{equation}\label{fg} \mathcal F(M)=M\tilde E_1, \text{ and }\mathcal G(N)= N\otimes_{\tilde E_1 \mathcal B_{n} \tilde E_1}\tilde E_1 \mathcal B_{n} \end{equation}
for any $\mathcal B_n$-module $M$ and any $\mathcal B_{n-2}$-module $N$.
\begin{Prop}\label{fg1} Suppose $(f, \lambda)\in \Lambda_n$ and $(\ell, \mu)\in \Lambda_{n+2}$.
\begin{itemize}\item [(1)] $\mathcal F\mathcal G=1$,
\item[(2)] $\mathcal F(C(f, \lambda))\cong C(f-1, \lambda)$ where $C(-1, \lambda)$ is defined to be $0$,
\item[(3)] $\mathcal G(C(f, \lambda))\cong C(f+1, \lambda)$,
\item [(4)] $\text{\emph{Hom}}_{\mathcal B_{n+2}} (\mathcal G(C(f, \lambda)), C(\ell, \mu))\cong \text{\emph{Hom}}_{\mathcal B_n} (C(f, \lambda), \mathcal F (C(\ell, \mu)))$.
\end{itemize}
\end{Prop}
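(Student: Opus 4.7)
The plan is to handle parts (1) and (4) as general adjunction facts, reduce (3) to (2) via that adjunction, and do the main work in (2) by explicit identification of bases. For (1) and (4), one uses the standard tensor-Hom adjunction for the $(e\mathcal B_n e,\mathcal B_n)$-bimodule $e\mathcal B_n$ with $e=\tilde E_1$. Since $\phi$ identifies $\mathcal B_{n-2}\cong e\mathcal B_n e$ (Theorem~\ref{iso111}), (4) follows directly from $\mathrm{Hom}_{\mathcal B_n}(N\otimes_{e\mathcal B_n e}e\mathcal B_n,M)\cong\mathrm{Hom}_{\mathcal B_{n-2}}(N,Me)$, and (1) is the cancellation $(N\otimes_{e\mathcal B_n e}e\mathcal B_n)e \cong N\otimes_{e\mathcal B_n e}e\mathcal B_n e \cong N$.

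For (2), I begin with the pair of identities $E_1\tilde E_1 = E_1$ and $\tilde E_1 E_1 = \delta\tilde E_1$, which follow immediately from $E_1T_1^{-1}=q^{-1}E_1$ and $E_1T_2E_1=zE_1$ (Definition~\ref{qqq}(2)--(3)). Hence $M\tilde E_1 = ME_1$ as subsets of any right $\mathcal B_n$-module $M$, so $\mathcal F(M)$ may be analyzed through right multiplication by $E_1$. When $f=0$, $C(0,\lambda)$ is inflated from a Specht module of $\mathcal H_n\cong\mathcal B_n/\langle E_1\rangle$ via \eqref{quot1}, so $E_1$ acts as zero and $\mathcal F(C(0,\lambda))=0=C(-1,\lambda)$. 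For $f\ge 1$, I would construct an explicit $\mathcal B_{n-2}$-linear map
$$\alpha\colon C(f-1,\lambda)\longrightarrow C(f,\lambda)\tilde E_1$$
sending $E^{f-1}x_\lambda T_{d(\mathbf t)} T_v + \mathcal B_{n-2}^{\rhd\lambda}$ to a suitable scalar multiple of $E^fx_\lambda\,\mathrm{sh}(T_{d(\mathbf t)} T_v)\,\tilde E_1 + \mathcal B_n^{\rhd\lambda}$, where $\mathrm{sh}$ shifts every generator index by $2$. The scalar is dictated by the factorization $E^f=E_1T_{2,2f}T_{2f-1,1}^{-1}E^{f-1}$ of Definition~\ref{k2222}(1) together with Proposition~\ref{ffunc1}(2). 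Equivariance under $\phi(T_i)=\tilde E_1T_{i+2}$ and $\phi(E_1)=\tilde E_1E_3$ is mechanical, and bijectivity follows from a dimension count using the Jucys-Murphy basis of Theorem~\ref{murphyb}.

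For (3), apply (4) with $(\ell,\mu) = (f+1,\lambda)\in\Lambda_{n+2}$ and feed (2) into the right-hand side to obtain
$$\Hom_{\mathcal B_{n+2}}(\mathcal G(C(f,\lambda)),\, C(f+1,\lambda)) \cong \End_{\mathcal B_n}(C(f,\lambda)).$$
The morphism $\eta$ corresponding to $\mathrm{id}_{C(f,\lambda)}$ is nonzero; since $\mathcal F(\eta)=\mathrm{id}$ by (1), $\eta$ is surjective, and a dimension count using the description of $\tilde E_1\mathcal B_{n+2}$ as a left $\tilde E_1\mathcal B_{n+2}\tilde E_1$-module from Corollary~\ref{e1be1}(2) forces $\eta$ to be an isomorphism. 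The main obstacle is the explicit construction in (2): verifying well-definedness of $\alpha$ modulo the cellular ideal $\mathcal B_n^{\rhd\lambda}$ and its compatibility with the $\mathcal B_{n-2}$-action under $\phi$ requires careful juggling of the factor $E_1T_{2,2f}T_{2f-1,1}^{-1}$ against shifted basis elements, via the relations of Lemmas~\ref{k2} and~\ref{k3}. Once (2) is in hand, the remaining parts follow essentially by adjunction and dimension counts.
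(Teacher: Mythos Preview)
Your treatment of (1), (2), (4) is essentially the paper's: adjunction for (1) and (4), and an explicit identification of $C(f,\lambda)\tilde E_1$ with $C(f-1,\lambda)$ for (2) via the shifted basis (the paper builds the map in the other direction, first computing a basis of $C(f,\lambda)\tilde E_1$ and then matching it with the cellular basis \eqref{basis321} of $\tilde E_1\mathcal B_n\tilde E_1$, but the content is the same).

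For (3), however, your dimension count does not close. Corollary~\ref{e1be1}(2) with $f=1$ only tells you that $\tilde E_1\mathcal B_{n+2}$ is generated as a left $\tilde E_1\mathcal B_{n+2}\tilde E_1$-module by $|\mathcal D_{1,n+2}|=\binom{n+2}{2}$ elements, so the resulting bound on $\dim\mathcal G(C(f,\lambda))$ is $\dim C(f,\lambda)\cdot|\mathcal D_{1,n+2}|=(f+1)\dim C(f+1,\lambda)$, which is too large for $f\ge 1$. Since $\mathcal F$ is not faithful (it kills everything inflated from $\mathcal H_{n+2}$), knowing $\mathcal F(\eta)$ is an isomorphism and $\eta$ is surjective does not by itself force $\eta$ to be injective; you genuinely need $\dim\mathcal G(C(f,\lambda))\le\dim C(f+1,\lambda)$. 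The paper obtains this sharper bound not from Corollary~\ref{e1be1}(2) but from Lemma~\ref{efb}: working modulo $\mathcal B_{n+2}^{f+2}$, the module $E^{f+1}\mathcal B_{n+2}$ is spanned over $\mathcal H_{2f+3,n+2}$ by the $|\mathcal D_{f+1,n+2}|$ elements $E^{f+1}T_d$, and after absorbing the $\mathcal H_{2f+3,n+2}$-action into the Specht part of $C(f,\lambda)$ across the tensor one gets exactly $|\Std(\lambda)|\cdot|\mathcal D_{f+1,n+2}|=\dim C(f+1,\lambda)$ spanning elements. (The paper also writes down the surjection $\psi$ directly rather than extracting it from the adjunction, but your adjunction route produces the same map; the only missing ingredient is citing Lemma~\ref{efb} in place of Corollary~\ref{e1be1}(2).)
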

\begin{proof} Obviously, (1) follows from Theorem~\ref{iso111}. Recall $\mathcal D_{f, n}$ in Definition~\ref{dlf}(1). For any $d\in \mathcal D_{f, n-2}$, let $\tilde d$ be obtained from $d$ by replacing each factor $s_i$ of $d$ by $s_{i+2}$.
Let $\mathscr D_{f, n}$ be the set of all such $\tilde d$. Then
 $\mathscr D_{f, n}\subset \mathcal D_{f+1, n}$.
 By Theorems~\ref{sstan},~\ref{iso111},
$\tilde E_1 \mathcal B_n\tilde E_1$ has cellular basis
\begin{equation} \label{basis321} \{ qz^{-1} T_1^{-1}T_2\sigma(T_d) E^{f+1} x_{\s\t} T_e\mid d, e\in \mathscr D_{f, n}, \s, \t\in \Std(\lambda), (f, \lambda)\in \Lambda_{n-2}\}\end{equation}
where  $x_\lambda$ corresponds
to the index representation of $\mathcal H_{2f+3, n}$. If we denote by $\tilde C(f-1, \lambda)$ the corresponding cell module of
$\tilde E_1 \mathcal B_{n} \tilde E_1$ with respect to $(f-1, \lambda)$, then
\begin{equation}\label{cellsio}   C(f-1, \lambda)\cong \tilde C(f-1, \lambda)\end{equation}  as $\mathcal B_{n-2}$-modules and the required isomorphism sends any basis element
$E^{f-1} x_\s T_d+\mathcal B_{n-2}^{\rhd \lambda}$ to $qz^{-1} T_1^{-1} T_2 E^{f} \phi(x_\s) \phi (T_d)+{\tilde B}^{\rhd \lambda}$
where ${\tilde B}^{\rhd \lambda}$ is the free $\Bbbk$-module spanned by  $$\bigcup_{(l, \mu)\in \Lambda_n} \{ qz^{-1} T_1^{-1}T_2\sigma(T_{e_1}) E^{l} x_{\s\t} T_{e_2}\mid \s, \t\in \Std(\mu),  e_1, e_2\in \mathscr D_{l-1, n},  \mu\rhd \lambda\}.$$ Now, we compute $\mathcal F(C(f, \lambda))$.
Suppose $d\in \mathcal D_{f, n} $. Thanks to \eqref{aast},
 $E^{f} T_d\tilde E_1$ can be written as a linear combination of elements
$ E^{f} T_e  T_w$ with $w\in \mathfrak S_{3, n}$ and $e=s_{2f, i_{f}} s_{2f-1, j_{f}}\cdots s_{4, i_2} s_{3, j_2}\in \mathscr D_{f-1, n}$.
This shows  $\mathcal F(C(f, \lambda))$ has basis
 $$\{E^{f} x_\t T_d +\mathcal B_n^{\rhd \lambda} \mid \t\in \Std(\lambda), d\in  \mathscr D_{f-1, n}\}. $$
So $\mathcal F_n(C(f, \lambda))\cong \tilde C(f-1, \lambda)$ and the   required
right $\tilde E_1 \mathcal B_n\tilde E_1$-isomorphism is
the linear isomorphism sending $ E^{f} x_{\t} T_{d}+{\mathcal B_n^{\rhd \lambda}}$ to $qz^{-1} T_1^{-1}T_2 E^{f} x_\t T_d+{\tilde B}^{\rhd \lambda} $. Now (2) follows from \eqref{cellsio}.

By definition, $\mathcal G(C(f, \lambda))=\overline {E^fx_\lambda} \mathcal B_{n}
\otimes_{\tilde E_1 \mathcal B_{n+2} \tilde E_1} \tilde E_1 \mathcal B_{n+2}$ where
$\overline {E^fx_\lambda}=E^fx_\lambda +\mathcal B_{n}^{\rhd \lambda} $. There is a right
$\mathcal B_{n+2}$-homomorphism $\psi: \mathcal G(C(f, \lambda))\rightarrow C(f+1, \lambda)$
sending $\overline {E^fx_\lambda}\otimes \tilde E_1 h$ to $\overline {E^{f+1}x_\lambda}  h$ for any $h\in \mathcal B_{n+2}$. Since
$C(f+1, \lambda)$ is a cyclic module generated by $\overline {E^{f+1}x_\lambda}$, $\psi$ is surjective.
By Lemma~\ref{efb},   any element in  $\mathcal G(C(f, \lambda))$
can be written as a linear combination of elements $\overline{ E^f   x_\s}
\otimes \tilde E_1 T_d$, where $\s\in \Std(\lambda)$ and  $ d\in \mathcal D_{f+1, n+2}$.  So the number of all such elements is equal to  the rank of $ C(f+1, \lambda)$. Since we are considering  the $q$-Brauer algebra over a field $F$,
$\dim_F \mathcal G(C(f, \lambda))\le \dim_F C(f+1, \lambda)$, forcing $\psi$ to be an isomorphism.
Finally, (4) follows from the adjoint associativity  of  tensor,  hom functors and the well-known fact that $\mathcal F(C(\ell,\mu))\cong \Hom_{\mathcal B_n}(\tilde E_1 \mathcal B_n, C(\ell,\mu)) $.
\end{proof}

\section{The radical of $C(1, \mu)$}
In this section we consider the $q$-Brauer algebra over the field $F$
containing invertible  $q$ such that $e>n$ where $e$ is the quantum characteristic of $q^2$.
We are going to  describe explicitly the  radical of  cell modules  $C(1, \mu)$ for all $(1, \mu)\in \Lambda_n$.  This result can be considered as the counterpart of
\cite[Theorem~3.4]{DWH} for the Brauer algebra.  From Lemma~\ref{ne1} to the end of Remark~\ref{gramlimit}
we assume that the $q$-Brauer algebra is defined over $\Bbbk$.
The following result is motivated by Doran-Wales-Hanlon's work on Brauer algebra in \cite{DWH}.

\begin{Lemma}\label{bimod1}Let $V$ be the free $\Bbbk$-module with basis $\{  E_1 T_w T_d+\mathcal B_n^2\mid w\in \mathfrak S_{3, n}, d\in \mathcal D_{1, n}\}$.
Then $V$ is an $(\mathcal H_{3,n}, \mathcal B_n)$-bimodule with the natural actions induced by the multiplication in $\mathcal B_n$.\end{Lemma}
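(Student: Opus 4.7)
The plan is to view $V$ as the image of $E_1 \mathcal B_n$ in the quotient $\mathcal B_n/\mathcal B_n^2$. First I would establish linear independence of the spanning set, then verify the two actions are well-defined and preserve $V$, using the relation $T_i E_1 = E_1 T_i$ for $i \ge 3$ from Definition~\ref{qqq}(4) as the main lubricant.

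\textbf{Step 1 (linear independence).} By Theorem~\ref{sstan}, the set $\{\sigma(T_{d_1}) E^f x_{\s\t} T_{d_2}\}$ is a cellular basis of $\mathcal B_n$; modulo $\mathcal B_n^2$ only the terms with $f \in \{0,1\}$ survive, and they remain $\Bbbk$-linearly independent. Restricting to $f=1$ and $d_1 = 1$ gives a linearly independent family $\{E_1 x_{\s\t} T_{d_2} + \mathcal B_n^2\}$ with $\s,\t \in \Std(\lambda)$, $\lambda \vdash n-2$ and $d_2 \in \mathcal D_{1,n}$. Theorem~\ref{mur} says $\{x_{\s\t}\}$ is a $\Bbbk$-basis of $\mathcal H_{3,n}$, so the transition matrix to the basis $\{T_w \mid w \in \mathfrak S_{3,n}\}$ is invertible over $\Bbbk$. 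Hence $\{E_1 T_w T_d + \mathcal B_n^2 \mid w \in \mathfrak S_{3,n}, d \in \mathcal D_{1,n}\}$ spans the same $\Bbbk$-module as the linearly independent family $\{E_1 x_{\s\t} T_d + \mathcal B_n^2\}$, and both have cardinality $(n-2)! \cdot |\mathcal D_{1,n}|$, so the former is also a basis. This identifies $V$ with a $\Bbbk$-submodule of $\mathcal B_n/\mathcal B_n^2$.

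\textbf{Step 2 (left $\mathcal H_{3,n}$-action).} Define the left action by left multiplication in $\mathcal B_n$ followed by projection to $\mathcal B_n/\mathcal B_n^2$; this is well-defined because $\mathcal B_n^2$ is a two-sided ideal. For any $u \in \mathfrak S_{3,n}$, Definition~\ref{qqq}(4) gives $T_u E_1 = E_1 T_u$, so
\[ T_u \cdot (E_1 T_w T_d + \mathcal B_n^2) = E_1 (T_u T_w) T_d + \mathcal B_n^2. \]
Expanding $T_u T_w$ in the Hecke basis $\{T_{w'} \mid w' \in \mathfrak S_{3,n}\}$ puts the result in $V$, so $V$ is stable under the left $\mathcal H_{3,n}$-action.

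\textbf{Step 3 (right $\mathcal B_n$-action).} Given $h \in \mathcal B_n$, note that $(E_1 T_w T_d) \cdot h \in E_1 \mathcal B_n \subseteq E_1 \mathcal B_n + \mathcal B_n^2$. By Lemma~\ref{efb} (with $f=1$), modulo $\mathcal B_n^2$ this sum is a left $\mathcal H_{3,n}$-module spanned by $\{E_1 T_e + \mathcal B_n^2 \mid e \in \mathcal D_{1,n}\}$. Therefore
\[ (E_1 T_w T_d)\, h \equiv \sum_{e \in \mathcal D_{1,n}} h_e \cdot E_1 T_e \pmod{\mathcal B_n^2} \]
for some $h_e \in \mathcal H_{3,n}$. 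Using $h_e E_1 = E_1 h_e$ and expanding each $h_e$ in the basis $\{T_{w'}\}_{w' \in \mathfrak S_{3,n}}$ realizes the right-hand side as a $\Bbbk$-linear combination of the basis elements of $V$. Hence the right $\mathcal B_n$-action preserves $V$.

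\textbf{Step 4 (bimodule axiom).} The compatibility $(T_u \cdot v)\cdot h = T_u \cdot (v \cdot h)$ for $v \in V$ is immediate from associativity of multiplication in $\mathcal B_n$. The only nontrivial point in the whole argument is Step 1; however, once the cellular basis and the basis of $\mathcal H_{3,n}$ are available, this reduces to an invertible change-of-basis together with a cardinality count, so no serious obstacle arises.
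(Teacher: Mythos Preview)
Your proof is correct and follows essentially the same approach as the paper, which simply cites Theorem~\ref{sstan} and Lemma~\ref{efb}; you have merely unpacked those citations into explicit steps (the cellular basis gives freeness in Step~1, and Lemma~\ref{efb} with $f=1$ gives the right action in Step~3).
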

\begin{proof} The result follows from Theorem~\ref{sstan} and Lemma~\ref{efb}.\end{proof}

In the remaining part of this section, we write  $a=q-q^{-1}$.
\begin{Lemma}\label{ne1} Suppose $s_{2, j_1} s_{1, i_1}, s_{2, j_2} s_{1, i_2}\in \mathcal D_{1, n}$.
   There are  $h\in \sum_{x\in \mathfrak S_{3, n} }  q^{\mathbb Z}\mathbb Z[a] T_x$, $w\in \mathfrak S_{3, n}$  and
 $b\in \{0, 1, 2\}$  such that
\begin{equation}\label{m0}E_1T_{2,j_1}T_{1, i_1} T_{i_2, 1} T_{j_2, 2} E_1\equiv \delta_{(i_1,j_1),(i_2, j_2)}
 \delta E_1 +  (1-  \delta_{(i_1,j_1),(i_2,j_2)}) z q^b T_{w} E_1 + az h E_1\pmod{\mathcal B^2_n}.
 \end{equation}
\end{Lemma}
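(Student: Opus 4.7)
The plan is to compute the left-hand side of \eqref{m0} directly using the defining relations, distinguishing the diagonal case $(i_1, j_1) = (i_2, j_2)$ from the off-diagonal case; I would expand the middle product $T_{1, i_1} T_{i_2, 1}$ in $\mathcal{H}_n$ using the quadratic relation $T_k^2 = 1 + aT_k$ and sandwich the result between $E_1 T_{2, j_1}$ and $T_{j_2, 2} E_1$, tracking which contributions fall into $\mathcal{B}_n^2$. When $i_1 = i_2 = k$, iterating the quadratic relation telescopes the inner product to $T_{1, k} T_{k, 1} = 1 + a \sum_{m=1}^{k-1} T_{1, m+1} T_{m, 1}$, so the identity summand gives the principal contribution while every other summand carries an explicit factor of $a$. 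When $i_1 \neq i_2$, braid relations combined with at most one application of the quadratic relation expand $T_{1, i_1} T_{i_2, 1}$ as a shorter ``reduced'' word plus $a$-weighted corrections.

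Next I would sandwich each resulting piece between $E_1 T_{2, j_1}$ and $T_{j_2, 2} E_1$ and reduce using the relations of Definition~\ref{qqq}: $E_1 T_1 = qE_1 = T_1 E_1$, $E_1 T_i = T_i E_1$ for $i \geq 3$, and the crucial identity $E_1 T_2 E_1 = zE_1$. Any subfactor matching the pattern $E_1 T_2 T_3 T_1^{-1} T_2^{-1} E_1$, or equivalently $E_1 E_3 h$ with $h \in \mathcal{H}_{3, n}$, equals $E^2$ by Lemma~\ref{k2}(3) and Lemma~\ref{k3}(4), hence lies in $\mathcal{B}_n^2$; this is exactly the mechanism exploited in \eqref{aast} in the proof of Theorem~\ref{iso111}. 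After discarding such contributions the surviving terms all lie in $E_1 \mathcal{H}_{3, n}$.

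Collecting, in the diagonal case the ``identity-in-the-middle'' term gives $E_1 \cdot 1 \cdot E_1 = \delta E_1$, while every other contribution carries an explicit factor of $a$, producing the form $\delta E_1 + azhE_1$. In the off-diagonal case the analogous principal contribution is a single term $zq^b T_w E_1$ in which one $T_2$-factor is absorbed via $E_1 T_2 E_1 = zE_1$ and at most two $T_1$-factors are absorbed via $T_1 E_1 = qE_1$, giving $b \in \{0, 1, 2\}$; the residual terms once again carry the factor $a$ from the quadratic-relation corrections.

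The main obstacle is the combinatorial bookkeeping needed to identify the precise word $w$ and exponent $b$ of the off-diagonal principal term, and to verify that the correction $h$ has coefficients in $q^{\mathbb{Z}}\mathbb{Z}[a]$. Several subcases (depending on the relative order of $i_1, i_2, j_1, j_2$, and on whether either $d_1$ or $d_2$ coincides with the identity element of $\mathcal{D}_{1, n}$) must be treated separately to pin down the structure of the principal contribution and the integrality of the corrections.
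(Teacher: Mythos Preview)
Your plan is essentially the paper's own approach: telescope the inner product via the quadratic relation, sandwich, reduce with the $E_1$-relations, and discard the $E^2$-type contributions. Two points will sharpen it into an actual proof. First, in the diagonal case the ``identity-in-the-middle'' term is $E_1T_{2,j_1}\cdot 1\cdot T_{j_1,2}E_1$, not $E_1\cdot 1\cdot E_1$; you need a \emph{second} telescoping of $T_{2,j_1}T_{j_1,2}$ before the $\delta E_1$ appears, and the resulting $a$-corrections feed $E_1T_2E_1=zE_1$ to land in $azhE_1$. Second, the workhorse of the off-diagonal case is not the bare pattern $E_1T_2T_3T_1^{-1}T_2^{-1}E_1=E^2$ but rather the residue $E_1T_2T_1T_3T_2E_1\equiv azE_1+azqE_1T_3\pmod{\mathcal B_n^2}$, which recurs in every subcase after braid-moving the outer factors into position; isolating this identity up front (as the paper does) organises the bookkeeping. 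The paper also halves the off-diagonal casework by applying the anti-involution $\sigma$ to reduce to $i_1<i_2$ or $(i_1=i_2,\,j_1<j_2)$, which you may find convenient.
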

\begin{proof} Suppose $k<l$.  Thanks to the 	quadratic relation  in \eqref{kkk1}, and Definition~\ref{qqq},
\begin{equation}\label{zzzzz} T_{k, l} T_{l,k}=1+a\sum_{k+1\le i\le l} T_{i, k+1} T_k T_{k+1, i}, \text{ and } E_1 T_2T_1T_3T_2E_1\equiv az E_1+azqE_1T_3 \pmod {\mathcal B_n^2}.
\end{equation}   If  $l\le i_1$, then  \begin{equation}\label{sssss1} E_{1} T_{2,j_1} T_{l, 2} T_{1, l} T_{j_1,2} E_1  =
T_{l+1,3} E_1 T_{2,j_1}T_1T_{j_1,2} E_1 T_{3,l+1}.\end{equation}  Let $x=E_1 T_{2,j_1}T_1T_{j_1,2}E_1$. By braid relations in \eqref{kkk1}, Definition~\ref{qqq}(3) and \eqref{zzzzz},
\begin{equation}\label{ne3} \begin{aligned}x& =
  E_1 T_2T_1 T_{3,j_1}T_{j_1,3} T_2E_1 \\
&=E_1 T_2T_1 (1+a\sum_{4\le m\le j_1} T_{m,4}T_{j_1,3}) T_2E_1 \\
&= q^2z E_1  +a\sum_{4\le m\le j_1}  T_{m,4} E_1T_2T_1T_3T_2E_1 T_{4,m} \\ & \equiv  q^2z E_1   +a\sum_{4\le m\le j_1}  T_{m,4}( az E_1+azqE_1T_3   ) T_{4,m} \pmod {\mathcal B_n^2}
.
\end{aligned}\end{equation}

Now, we are ready to verify \eqref{m0}.
Suppose $(i_1, j_1)=(i_2, j_2)$. By \eqref{zzzzz} and Definition~\ref{qqq}(3)
$$\begin{aligned} E_1 T_{2, j_1} T_{1, i_1} T_{i_1, 1} T_{j_1, 2} E_1& =
E_{1} T_{2, j_1} T_{j_1,2} E_1+ a E_1T_{2,j_1}\sum_{2\le l\le i_1} T_{l,2} T_1 T_{2, l}  T_{j_1,2}E_1\\
&=\delta E_1+a\sum_{3\le k\le j_1}  T_{k, 3} E_1T_2E_1T_{3,k},+a E_1T_{2,j_1}\sum_{2\le l\le i_1} T_{l,2} T_1 T_{2, l}  T_{j_1,2}E_1\\
&=\delta E_1+azE_1\sum_{3\le k\le j_1}  T_{k, 3} T_{3,k},+a E_1T_{2,j_1}\sum_{2\le l\le i_1} T_{l,2} T_1 T_{2, l}  T_{j_1,2}E_1.\\
\end{aligned} $$
By \eqref{sssss1}-\eqref{ne3}, $E_1 T_{2, j_1} T_{1, i_1} T_{i_1, 1} T_{j_1, 2} E_1$  can be written as the required form
in \eqref{m0}.

Suppose $(i_1,j_1)\neq (i_2,j_2)$. Applying anti-involution $\sigma$ on \eqref{m0}, we see that it is enough to prove \eqref{m0} in two cases:(1) $i_1<i_2<j_2$,
 (2) $i_1=i_2$ and $j_1<j_2$ .
 In the first case, we have
    \begin{equation}\label{n6}
 E_1 T_{2,j_1} T_{1, i_1} T_{i_2, 1} T_{j_2,2} E_1=y T_{3, i_1+2}
\end{equation}
where $y= E_1T_{2, j_1} T_{i_2, 1} T_{j_2, 2} E_1 $. We have
 \begin{equation}\label{y1} y  =\begin{cases} T_{i_2,3}T_{j_2, 4} E_1 T_2T_1T_3T_2E_1T_{4, j_1+2} & \text{if $j_1<i_2$,}\\
 qzE_1T_{j_2, 3}+a  \sum_{3\le m\le j_1} T_{m,3}T_{j_2, 4} E_1T_2T_1T_3T_2E_1T_{4, m+1} &\text{if $j_1=i_2$,}\\
 T_{i_2+1, 3}  E_1 T_{2,j_1} T_1  T_{j_2,2} E_1 &\text{if $j_1>i_2$.}\\
 \end{cases} \end{equation}
 Applying anti-involution $\sigma$ on
 $E_1 T_{2,j_1} T_1  T_{j_2,2} E_1$, we see that  it is enough to  assume $j_1\leq j_2$ when we assume $j_1>i_2$.
  If $j_1<j_2$, then
 \begin{equation}\label{y00} E_1 T_{2,j_1} T_1  T_{j_2,2} E_1=T_{j_2,4} E_1T_{2}T_1T_3T_2E_1 T_{4, j_1+1}.\end{equation}
Under the assumption (1) except $j_1=j_2>i_2$, \eqref{m0}   follows immediately from \eqref{n6}--\eqref{y1} if
 we use \eqref{ne1} to rewrite $E_1T_2T_1T_3T_2E_1$. In the remaining case,
 since $ E_1 T_{2,j_1} T_1  T_{j_2,2} E_1$ has already been computed in \eqref{ne3} and
 \eqref{m0} follows immediately.
  In the second case,
 $$\begin{aligned} E_1 T_{2,j_1} T_{1, i_1} T_{i_1, 1} T_{j_2,1} E_1& =E_1T_{2, j_1} T_{j_2,2} E_1+ a\sum_{2\le m\le i_1}   E_1T_{2,j_1} T_{m, 2} T_{1,m} T_{j_2,2}E_1\\
  & =zE_1T_{j_2,3} T_{3,j_1+1} +a\sum_{2\le m\le i_1}T_{m+1,3} E_1T_2T_1  T_{j_2,2} E_1T_{4,j_1+1} T_{3, m+1}\\ &
 = zE_1T_{j_2,3} T_{3,j_1+1} +a\sum_{2\le m\le i_1} T_{m+1,3} T_{j_2,4}  E_1T_2T_1  T_3T_2 E_1T_{4,j_1+1} T_{3, m+1}.
  \end{aligned}
$$
 Rewriting   $E_1T_2T_1  T_3T_2 E_1$ via  \eqref{ne1} again, we have \eqref{m0} immediately.
\end{proof}

\begin{Defn}\label{bili1}  Let $\phi: V\times V \rightarrow \Bbbk$ be the bilinear form such that for any $w_1, w_2\in \mathfrak S_{3, n}$ and $d_1,d_2\in \mathcal D_{1, n}$,
$$\phi( E_1 T_{w_1} T_{d_1}+\mathcal B_n^2,  E_1T_{w_2} T_{d_2}+\mathcal B_n^2)=\tau(h)$$ where  $h\in \mathcal H_{3, n}$ such that
$E_1 T_{w_1} T_{d_1} \sigma(T_{d_2}) \sigma (T_{w_2}) E_1\equiv E_1h \pmod
{\mathcal B_n^2}$ and $\tau: \mathcal H_{3,n}\rightarrow \mathcal Z $ is the trace function in \eqref{tau}.
\end{Defn}
Thanks to Theorem~\ref{sstan} and Corollary~\ref{e1be1}(1), the element $h$ in Definition~\ref{bili1} is unique  and the bilinear form $\phi$ is well-defined.
The following result follows immediately from  Definition~\ref{bili1}.
 \begin{Lemma}\label{phi}  Let $\phi: V\times V\rightarrow \Bbbk$ be the bilinear form
  in Definition~\ref{bili1}. Then $\phi$ is a symmetric invariant form in the sense that
 $\phi(x a, y)=\phi(x, y\sigma(a))$ for all $x, y\in V $ and $a\in \mathcal B_n$.
\end{Lemma}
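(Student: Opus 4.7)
The plan is to verify both claims by reducing them to a single $\sigma$-manipulation inside $E_1 \mathcal B_n E_1$ combined with the fact that the trace $\tau$ is $\sigma$-invariant on $\mathcal H_{3,n}$. By bilinearity, Definition~\ref{bili1} extends to arbitrary lifts: if $x = E_1 u + \mathcal B_n^2$ and $y = E_1 v + \mathcal B_n^2$ with $u,v \in \mathcal B_n$, then $\phi(x,y) = \tau(h)$ where $E_1 u \sigma(v) E_1 \equiv E_1 h \pmod{\mathcal B_n^2}$ with $h \in \mathcal H_{3,n}$, and the uniqueness of such $h$ is precisely what is asserted in the paragraph following Definition~\ref{bili1}.

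For invariance I would compute both sides directly. On the one hand $\phi(xa,y) = \tau(h_1)$ with $E_1 u a \sigma(v) E_1 \equiv E_1 h_1 \pmod{\mathcal B_n^2}$. On the other hand $\phi(x, y\sigma(a)) = \tau(h_2)$ with
\[
E_1 u \sigma(v\sigma(a)) E_1 \;=\; E_1 u \sigma(\sigma(a)) \sigma(v) E_1 \;=\; E_1 u a \sigma(v) E_1 \;\equiv\; E_1 h_2 \pmod{\mathcal B_n^2},
\]
using that $\sigma$ is an anti-involution (so $\sigma(v\sigma(a))=\sigma(\sigma(a))\sigma(v)$) and $\sigma^2 = \mathrm{id}$. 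Uniqueness of $h_i$ then forces $h_1 = h_2$, whence $\phi(xa,y) = \phi(x, y\sigma(a))$.

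For symmetry I would apply $\sigma$ to the defining congruence $E_1 u \sigma(v) E_1 \equiv E_1 h \pmod{\mathcal B_n^2}$. Because $\sigma(E^2) = E^2$ by Lemma~\ref{k2}(2), the ideal $\mathcal B_n^2$ is $\sigma$-stable, yielding
\[
E_1 v \sigma(u) E_1 \;\equiv\; \sigma(h) E_1 \pmod{\mathcal B_n^2}.
\]
Since $h \in \mathcal H_{3,n}$, so is $\sigma(h)$, and by Definition~\ref{qqq}(4) every generator of $\mathcal H_{3,n}$ commutes with $E_1$; hence $\sigma(h) E_1 = E_1 \sigma(h)$. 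Uniqueness now gives $\phi(y,x) = \tau(\sigma(h))$. Finally, $\sigma(T_w) = T_{w^{-1}}$ combined with $\tau(T_w) = \delta_{w,1}$ (cf.\ \eqref{tau}) forces $\tau \circ \sigma = \tau$ on $\mathcal H_{3,n}$, so $\phi(y,x) = \tau(h) = \phi(x,y)$.

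The only substantive point in the argument is the well-definedness of $\phi$ on general lifts, and this is already granted by the uniqueness remark immediately following Definition~\ref{bili1}. Everything else is a few lines of symbol-pushing using only $\sigma^2 = \mathrm{id}$, $\sigma$-stability of $\mathcal B_n^2$, the commutation of $\mathcal H_{3,n}$ with $E_1$, and the trace identity $\tau\circ\sigma=\tau$; I do not anticipate any serious obstacle.
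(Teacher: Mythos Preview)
Your proposal is correct and is essentially the detailed verification that the paper omits: the paper's own ``proof'' is just the sentence that the lemma follows immediately from Definition~\ref{bili1}. Your argument unwinds this claim in the natural way, using the anti-involution $\sigma$, the $\sigma$-stability of $\mathcal B_n^2$, the commutation $T_iE_1=E_1T_i$ for $i>2$, and $\tau\circ\sigma=\tau$; there is nothing more to add.
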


In order to simplify notation, define \begin{equation}\label{vwd} v_{w, d_{j, i}}=E_1 T_w T_{d_{j, i}}+\mathcal B_n^2, \end{equation} where   $d_{j, i}=s_{2,j}s_{1, i}\in \mathcal D_{1, n}$ and $w\in \mathfrak S_{3, n}$.
Then $\{v_{w, d_{j, i}}\mid 1\le i<j\le n, w\in \mathfrak S_{{3, n}}\}$ is a basis of $V$.

\begin{Lemma}\label{sys2} For any $1\le k\le n-1$, $T_k$ acts on the right of $V$ as a symmetric matrix. \end{Lemma}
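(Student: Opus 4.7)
The plan is to identify $V$ with a quotient of the Hecke algebra $\mathcal H_n$ and then to deduce the symmetry from the trace form on $\mathcal H_n$. Introduce the bilinear form $\langle a,b\rangle:=\tau(a\sigma(b))$ on $\mathcal H_n$, where $\tau$ is the trace of \eqref{tau} and $\sigma$ is the restriction of the anti-involution from Lemma~\ref{anti} (so $\sigma(T_i)=T_i$). By \eqref{tau} the basis $\{T_w\}_{w\in\mathfrak S_n}$ is orthonormal; since $\tau\circ\sigma=\tau$ and $\tau$ is cyclic, one has both the symmetry $\langle a,b\rangle=\langle b,a\rangle$ and the invariance $\langle aT_k,b\rangle=\langle a,bT_k\rangle$. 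In particular each $T_k$ and also $1+qT_1$ (fixed by $\sigma$ since $\sigma(T_1)=T_1$) is self-adjoint for this form.

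Next I would show $V\cong\mathcal H_n/(T_1-q)\mathcal H_n$ as right $\mathcal H_n$-modules. The natural surjection $\pi\colon\mathcal H_n\to V$, $h\mapsto E_1h+\mathcal B_n^2$, satisfies $(T_1-q)\mathcal H_n\subseteq\ker\pi$ because $E_1T_1=qE_1$. Since $s_1$ commutes with every element of $\mathfrak S_{3,n}$, the standard basis of $\mathcal H_n$ factors as $\{T_{s_1^aw}T_d=T_1^aT_wT_d:a\in\{0,1\},\,w\in\mathfrak S_{3,n},\,d\in\mathcal D_{1,n}\}$, and the identity $(T_1-q)T_1=-q^{-1}(T_1-q)$ gives $(T_1-q)\mathcal H_n=(T_1-q)U$ with $U:=\mathrm{span}\{T_wT_d\}$. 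A dimension comparison with Lemma~\ref{bimod1} then forces $\mathcal H_n=U\oplus(T_1-q)\mathcal H_n$, and the projection $P\colon\mathcal H_n\to U$ along this decomposition satisfies $P(T_{s_1^aw}T_d)=q^aT_wT_d$, since $T_{s_1w}T_d-qT_wT_d=(T_1-q)T_wT_d\in\ker\pi$. Under $\pi|_U$ the basis $T_wT_d$ corresponds to $v_{w,d_{j,i}}$, so the right action of $T_k$ on $V$ is transported to $u\mapsto P(uT_k)$ on $U$.

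To compute matrix coefficients, I would expand $T_wT_dT_k=\sum_{a,u,e}c_{(a,u,e)}T_{s_1^au}T_e$ in $\mathcal H_n$ and apply $P$ to obtain, with the help of orthonormality of $\{T_x\}$, invariance, and $T_1T_{w'}=T_{s_1w'}$,
\[
[T_k]^V_{(w,d),(w',d')}=c_{(0,w',d')}+q\,c_{(1,w',d')}=\langle T_wT_d,\,(1+qT_1)T_{w'}T_{d'}T_k\rangle.
\]
Then the chain
\[
\langle T_wT_d,(1+qT_1)T_{w'}T_{d'}T_k\rangle=\langle(1+qT_1)T_wT_dT_k,T_{w'}T_{d'}\rangle=\langle T_{w'}T_{d'},(1+qT_1)T_wT_dT_k\rangle,
\]
obtained by applying self-adjointness of $1+qT_1$, invariance of $T_k$, and symmetry of $\langle\cdot,\cdot\rangle$ in turn, gives $[T_k]^V_{(w,d),(w',d')}=[T_k]^V_{(w',d'),(w,d)}$.

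The hard part will be the clean identification $\ker\pi=(T_1-q)\mathcal H_n$ together with the explicit formula $P(T_{s_1^aw}T_d)=q^aT_wT_d$; both rest on the commutation $s_1w=ws_1$ for $w\in\mathfrak S_{3,n}$ making the factored basis well-behaved, so that the direct sum $\mathcal H_n=U\oplus(T_1-q)U$ can be exhibited without dividing by the possibly non-invertible element $q+q^{-1}$. Once these are in hand, the symmetry becomes a purely formal consequence of the self-adjointness of $1+qT_1$ and $T_k$ under the trace form.
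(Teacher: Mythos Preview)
Your proof is correct and takes a genuinely different route from the paper. The paper proves the lemma by a direct case analysis: it writes out $v_{w,d_{j,i}}T_k$ explicitly, splitting into the five cases $i=k+1$, $i=k$, $i>k+1$, $k>j$, and $i<k\le j$, and then observes from the resulting formulas that the diagonal entries of $A_k$ lie in $\{q,\,q-q^{-1},\,1\}$ while each off-diagonal entry is either $0$ or $1$, with the nonzero off-diagonal entries visibly occurring in symmetric pairs.

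Your argument instead identifies $V$ with $\mathcal H_n/(T_1-q)\mathcal H_n$ and pushes the symmetry back to the trace form on $\mathcal H_n$. The key facts---that $\{T_1^aT_wT_d:a\in\{0,1\},\,w\in\mathfrak S_{3,n},\,d\in\mathcal D_{1,n}\}$ is a basis of $\mathcal H_n$ (because $\mathcal D_{1,n}$ consists of distinguished coset representatives of the parabolic $\mathfrak S_2\times\mathfrak S_{3,n}$), that $(T_1-q)\mathcal H_n=(T_1-q)U$, and that both $T_k$ and $1+qT_1$ are self-adjoint for $\langle a,b\rangle=\tau(a\sigma(b))$ by cyclicity of $\tau$ together with $\sigma(T_i)=T_i$---are all sound. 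Your approach is cleaner and more conceptual, avoiding all case analysis; what the paper's computation buys is the explicit description of each entry of $A_k$, though this extra information is not used elsewhere in the paper.
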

\begin{proof}There are some $ a_{(w',d_{j', i'}),(w,d_{j, i})}\in \Bbbk$ and
$(w',d_{j', i'})\in \mathfrak S_{3, n}\times  \mathcal D_{1, n}$ such that
\begin{equation} \label{sys1} v_{w, d_{j, i}} T_k=\sum_{(w',d_{j', i'})} a_{(w',d_{j', i'}),(w,d_{j, i})} v_{w', d_{j',i'}}.\end{equation}  Let $A_k=(a_{(w',d_{j', i'}),(w,d_{j, i})})$ be the $m\times m$-matrix where
$m$ is the rank of $V$. Then $A_k$ is the matrix with respect to $T_k$.
 The LHS of \eqref{sys1} can be computed explicitly by Definition~\ref{qqq}. There are five cases:  (1) $i=k+1$, (2) $i=k$, (3) $i>k+1$, (4) $k>j$ (5) $i<k\le j$.  

Thanks to the  braid relations in \eqref{kkk1} and Definition~\ref{qqq}(2), we have
   $$v_{w, d_{j, i}} T_k=\begin{cases}
(q-q^{-1}) v_{w, d_{j, k+1}}+v_{w, d_{j, k}} & \text{ if $i=k+1$,}\\
v_{w, d_{j, k+1}} &\text{ if $i=k, j>k+1$,}\\
qv_{w, d_{j, k}}&\text{ if $i=k, j=k+1$.}\\
 \end{cases} $$
 When $i>k+1$, $$v_{w, d_{j, i}} T_k=\begin{cases} v_{ws_{k+2},d_{j,i }}  &\text{if $\ell(ws_{k+2})>\ell(w)$,}\\
 v_{ws_{k+2},d_{j,i }} +(q-q^{-1})v_{w, d_{j, i}}   &\text{if $\ell(ws_{k+2})<\ell(w)$.}
 \\
 \end{cases}$$
 If $k>j$, then $$v_{w, d_{j, i}} T_k=\begin{cases}v_{ws_{k}, d_{j,i} } &\text{if $\ell(ws_{k})>\ell(w)$,}\\
 v_{ws_{k}, d_{j, i}}+(q-q^{-1}) v_{w, d_{j, i}}   &\text{if $\ell(ws_{k})<\ell(w)$.}
 \\
 \end{cases}$$
 Finally, assume $i<k\le j$. If $k\ge j-1$, then
 $$v_{w, d_{j, i}} T_k=\begin{cases} v_{w, d_{j+1, i}  } &\text{if $k=j$,}\\
 (q-q^{-1})v_{w, d_{j, i} } + v_{w, d_{j-1, i} }   &\text{if $k=j-1$.}
 \\
 \end{cases}$$
 In the remaining case, $k<j-1$. We have
 $$v_{w, d_{j, i}} T_k=\begin{cases} v_{ws_{k+1}, d_{j, i}  } &\text{if $\ell(ws_{k+1})>\ell(w)$,}\\
 (q-q^{-1})v_{w, d_{j, i} } + v_{ws_{k+1}, d_{j, i} }   &\text{if $\ell(ws_{k+1})<\ell(w)$.}
 \\
 \end{cases}$$
 In summary, it follows from explicit computation on  $v_{w, d_{j, i}} T_k$ that  each
diagonal entry of $A_k$ is one of scalar in $\{q, q-q^{-1}, 1\}$ and each  off diagonal entry  of $A_k$ is either $0$ or $ 1$. Further, for any  $(w',d_{j', i'})\neq (w,d_{j, i})$, $a_{(w',d_{j', i'}),(w,d_{j, i})}=1$  if and only if $a_{(w,d_{j, i}), (w',d_{j', i'})}=1$. In particular,
  $A_k$ is a symmetric matrix, proving the result.
\end{proof}

\begin{Lemma}\label{bim1}  Let $G$ be the Gram matrix with respect to  the basis  $\{v_{w, d_{j, i}}\mid 1\le i<j\le n, w\in \mathfrak S_{{3, n}}\}$ of $V$  and  the symmetric invariant form $\phi$ in Definition~\ref{bili1}.
 If $\psi: V\rightarrow V$ is the linear map given by $G$ with respect to the basis above,  then  $\psi $ is a  $(\mathcal H_{3,n}, \mathcal H_n)$-bimodule homomorphism.
\end{Lemma}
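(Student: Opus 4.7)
The strategy is to reduce the claim to the commutations $A_k G=G A_k$ for $1\le k\le n-1$, where $A_k$ is the matrix of right multiplication by $T_k$ on $V$, and $L_j G=G L_j$ for $3\le j\le n-1$, where $L_j$ is the matrix of left multiplication by $T_j$. Since $G$ is symmetric by Lemma~\ref{phi}, these commutations will force $\psi$ (multiplication by $G$) to respect the right action of each generator of $\mathcal H_n$ and the left action of each generator of $\mathcal H_{3,n}$, hence both actions of the full algebras. For the right action, using $\sigma(T_k)=T_k$, the invariance in Lemma~\ref{phi} gives $\phi(xT_k,y)=\phi(x,yT_k)$, which translates into $A_k^T G=G A_k$. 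Lemma~\ref{sys2} supplies $A_k^T=A_k$, and hence $A_k G=G A_k$.

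The remaining task is the left $\mathcal H_{3,n}$-action. First I would derive a left analogue of the invariance in Lemma~\ref{phi}, namely $\phi(hx,y)=\phi(x,\sigma(h)y)$ for every $h\in\mathcal H_{3,n}$. This rests on two observations. By Definition~\ref{qqq}(4), elements of $\mathcal H_{3,n}$ commute with $E_1$, so if $\phi(x,y)=\tau(h_0)$ is realized by $E_1\tilde{x}\,\sigma(\tilde{y})\,E_1\equiv E_1 h_0\pmod{\mathcal B_n^2}$, then sliding $h$ past $E_1$ on the left yields $\phi(hx,y)=\tau(hh_0)$, while sliding $\sigma(h)$ past $E_1$ on the right yields $\phi(x,\sigma(h)y)=\tau(h_0 h)$. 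These agree by the cyclicity $\tau(ab)=\tau(ba)$ of the trace from \eqref{tau}. Specializing to $h=T_j$ ($j\ge 3$, so $\sigma(T_j)=T_j$) then yields $L_j^T G=G L_j$.

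It then remains to verify that $L_j$ itself is symmetric. Since $T_j$ commutes with $E_1$, the left action reads $T_j\cdot v_{w,d}=E_1(T_jT_w)T_d+\mathcal B_n^2$, touching only the Hecke factor $T_w$. Thus $L_j$ is block diagonal in the index $d\in\mathcal D_{1,n}$, with each block the matrix of left multiplication by $T_j$ on $\mathcal H_{3,n}$ in the basis $\{T_w\mid w\in\mathfrak S_{3,n}\}$. The Hecke identities $T_j T_w=T_{s_j w}$ when $\ell(s_j w)>\ell(w)$ and $T_j T_w=T_{s_j w}+(q-q^{-1})T_w$ otherwise make each block symmetric, as the off-diagonal entries pairing $T_w$ and $T_{s_j w}$ are both equal to $1$. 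Combining with $L_j^T G=G L_j$ then gives $L_j G=G L_j$. The main obstacle is organizing the left-invariance derivation cleanly, since $\phi$ is defined with a right-sided convention; fortunately the relation $T_j E_1=E_1 T_j$ together with the trace cyclicity of $\tau$ settle it quickly.
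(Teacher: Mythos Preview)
Your proposal is correct and follows essentially the same approach as the paper: both arguments use the right-invariance of $\phi$ together with the symmetry of $A_k$ from Lemma~\ref{sys2} for the right $\mathcal H_n$-action, and the commutation of $T_j$ ($j\ge 3$) with $E_1$ plus the trace property of $\tau$ for the left $\mathcal H_{3,n}$-action. In fact your write-up is slightly more complete, since you explicitly verify that the left-multiplication matrices $L_j$ are symmetric (needed to pass from $L_j^T G=GL_j$ to $L_jG=GL_j$), a step the paper leaves implicit.
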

\begin{proof}Thanks to Lemma~\ref{phi}, $GA_k$ is a symmetric matrix. By  Lemma~\ref{sys2},
$G A_k=A_{k}^t G^t=A_{k} G$, which is equivalent to saying that $\psi$ is a right $\mathcal H_n$-homomorphism.  Suppose $3\le i\le n$ and $(w, d), (w', d')\in \mathfrak S_{3, n}\times \mathcal D_{1, n}$.
Thanks to Definition~\ref{bili1} there is an $h\in \mathcal H_{3, n}$ such that
 $$\phi(T_i E_1 T_w T_d, E_1 T_{w'} T_{d'})=\tau(T_i h) \ \text{ and $\phi( E_1 T_w T_d, T_i E_1 T_{w'} T_{d'})=\tau(h T_i)$}.$$ Since $\tau$ is a trace function,
we have  $\phi(T_i E_1 T_w T_d, E_1 T_{w'} T_{d'})=\phi( E_1 T_w T_d, T_i E_1 T_{w'} T_{d'})$, proving that $\psi$ is a left $\mathcal H_{3,n}$-homomorphism.
Further, $\psi$ is a $(\mathcal H_{3,n}, \mathcal H_{n})$-homomorphism since two actions are induced by the multiplication on $\mathcal B_n$.
\end{proof}

\begin{Prop} \label{key321} For any two   basis elements $v_{w, d}, v_{w',d'}$ of $ V$ in \eqref{vwd},
$$\phi(v_{w, d}, v_{w',d'})=\begin{cases}
                                \delta+ az f(q) & \text{if $(w,d)=(w',d')$,} \\
                                czq^b+azg(q) & \text{otherwise,}
                              \end{cases}
$$
for some $b\in \mathbb Z$, $c\in \{0,1\}$ and $f(q),g(q)\in q^{\mathbb Z}\mathbb Z[a]$.
\end{Prop}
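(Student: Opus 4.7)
The plan is to compute $\phi$ directly from Definition~\ref{bili1} using Lemma~\ref{ne1}, then extract the $T_1$-coefficient via the trace property \eqref{tau}.

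First, I unwind the definition. Setting $d = d_{j_1,i_1}$ and $d' = d_{j_2,i_2}$, Definition~\ref{bili1} says $\phi(v_{w_1,d}, v_{w_2,d'}) = \tau(h)$ where $h \in \mathcal{H}_{3,n}$ is determined by
\begin{equation*}
E_1 T_{w_1} T_{2,j_1} T_{1,i_1} T_{i_2,1} T_{j_2,2} T_{w_2^{-1}} E_1 \equiv E_1 h \pmod{\mathcal{B}_n^2}
\end{equation*}
(using $\sigma(T_{w_2}) = T_{w_2^{-1}}$). Since $w_1, w_2 \in \mathfrak{S}_{3,n}$, Definition~\ref{qqq}(4) tells us $T_{w_1}$ and $T_{w_2^{-1}}$ commute with $E_1$, so the left-hand side equals $T_{w_1}\bigl(E_1 T_{2,j_1} T_{1,i_1} T_{i_2,1} T_{j_2,2} E_1\bigr) T_{w_2^{-1}}$. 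Applying Lemma~\ref{ne1} to the central bracket and pulling $E_1$ back to the left through the $\mathfrak{S}_{3,n}$-elements $T_w$ and $h_0$ that appear, one may take
\begin{equation*}
h = \delta_{(i_1,j_1),(i_2,j_2)}\,\delta\, T_{w_1} T_{w_2^{-1}} \;+\; (1 - \delta_{(i_1,j_1),(i_2,j_2)})\,zq^b\, T_{w_1} T_w T_{w_2^{-1}} \;+\; az\, T_{w_1} h_0 T_{w_2^{-1}},
\end{equation*}
for some $b \in \{0,1,2\}$, $w \in \mathfrak{S}_{3,n}$, and $h_0 \in \sum_{x\in\mathfrak{S}_{3,n}} q^{\mathbb{Z}}\mathbb{Z}[a] T_x$.

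Now I apply $\tau$ and split into cases. The $az$-summand of $\tau(h)$ always contributes $az$ times the $T_1$-coefficient of $T_{w_1} h_0 T_{w_2^{-1}}$, which lies in $q^{\mathbb{Z}}\mathbb{Z}[a]$ since Hecke structure constants are in $\mathbb{Z}[a]$ (the quadratic relation $T_i^2 = 1 + aT_i$ involves only $a$) and $h_0$ already has $q^{\mathbb{Z}}\mathbb{Z}[a]$-coefficients. If $(w_1,d) = (w_2,d')$, then $w_1 = w_2$, the $zq^b$-piece vanishes, and by \eqref{tau} $\tau(T_{w_1} T_{w_1^{-1}}) = 1$, yielding $\delta + azf(q)$. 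If $(i_1,j_1) = (i_2,j_2)$ but $w_1 \neq w_2$, the $zq^b$-piece again vanishes and $\delta\cdot\delta_{w_1,w_2} = 0$, so $c = 0$ works. In the remaining case $(i_1,j_1) \neq (i_2,j_2)$, by \eqref{tau} the trace $\tau(T_{w_1} T_w T_{w_2^{-1}})$ equals the coefficient $c_{w_2}$ of $T_{w_2}$ in the Hecke-algebra product $T_{w_1} T_w$. Since this product has $\mathbb{Z}[a]$-structure constants and specializing $a \to 0$ turns $\mathcal{H}_{3,n}$ into the group algebra $\mathbb{Z}\mathfrak{S}_{3,n}$ with $T_x \mapsto x$, the $a \to 0$ specialization of $c_{w_2}$ is $\delta_{w_2, w_1 w}$; hence $c_{w_2} = \delta_{w_2, w_1 w} + a\alpha$ for some $\alpha \in \mathbb{Z}[a]$. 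Setting $c := \delta_{w_2, w_1 w} \in \{0,1\}$ and folding $a\alpha \cdot zq^b$ into the $az$-contribution produces the claimed $czq^b + azg(q)$ with $g(q) \in q^{\mathbb{Z}}\mathbb{Z}[a]$.

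The main obstacle I foresee is step~1: confirming that the $T_w$ and $h_0$ emerging from Lemma~\ref{ne1} really do lie in $\mathcal{H}_{3,n}$ so that commuting $E_1$ across them is legitimate. This is the content of the statement of Lemma~\ref{ne1}, but one has to trace through its multi-case proof to see it holds uniformly; the rest of the argument is routine manipulation of traces and Hecke structure constants.
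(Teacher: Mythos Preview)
Your proof is correct and follows exactly the route the paper takes: the paper's entire argument is the single line ``The result follows from \eqref{tau} and \eqref{m0}'', and you have supplied the details of that deduction, including the case split and the specialization $a\to 0$ to identify the leading term of the structure constant. Your residual concern about whether $T_w$ and $h_0$ lie in $\mathcal H_{3,n}$ is already guaranteed by the statement of Lemma~\ref{ne1} (which places $w\in\mathfrak S_{3,n}$ and $h\in\sum_{x\in\mathfrak S_{3,n}}q^{\mathbb Z}\mathbb Z[a]T_x$), so no further case-tracing is required.
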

\begin{proof} The result follows from \eqref{tau} and  \eqref{m0}.
 \end{proof}

\begin{rem}\label{gramlimit}
 Suppose $z=q^m$ for some $m\in \mathbb Z$. The classical limit of $\mathcal B_n$ is the  Brauer  algebra $B_n(m)$ over $\mathbb Z$ (see Remark~\ref{limit}).
In this case,  the above  bilinear form  becomes the corresponding bilinear form on $V$ for $B_n(m)$ by setting $q\rightarrow 1$ and
 the corresponding Gram matrix  is $G_1=\lim_{q\rightarrow1}G$.
 More explicitly, the diagonal (resp., off diagonal) entries of the Gram matrix $G_1$ are of form $\delta$ (resp., $ 0$ or $1$) and
 the linear map  $\psi: V\rightarrow V$ induced by $ G_1$ in Lemma~\ref{bim1} becomes an $(\mathfrak S_{3,n},\mathfrak S_n)$-bimodule homomorphism.
\end{rem}

From here to the end of this section, we assume that   $\mathcal B_n$ is defined over the field $F$ which contains  a non-zero $q$ such that $e>n$ where $e$ is the quantum characteristic of $q^2$.

\begin{Defn} \label{admis} Suppose $e>n$ and  $(1, \mu), (0, \lambda)\in \Lambda_n$.
We say that $\lambda$ is $(1, \mu)$-admissible if \begin{itemize}
 \item[(1)] $\mu\subset \lambda$ and
 $\lambda\setminus \mu=\{p_1, p_2\}$ such that $p_1, p_2$ are not at the same column, 
 \item[(2)] $z^2=q^{2-2(c(p_1)+c(p_2))}$.\end{itemize}\end{Defn}

\begin{Theorem}\label{semi1}
For any   $(1, \mu), (0, \lambda)\in \Lambda_n$, $[C(1, \mu): C(0, \lambda)]\neq 0$
 if and only if  $\lambda$ is $(1, \mu)$-admissible. \end{Theorem}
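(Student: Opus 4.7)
The plan is to prove both implications by combining the Jucys--Murphy eigenvalue analysis with the bilinear form $\phi$ on the bimodule $V$. Under $e>n$ the Hecke algebras $\mathcal H_{3,n}$ and $\mathcal H_n$ are split semisimple, so every cell module $S^\nu$ with $|\nu|\le n$ is simple, $C(0,\lambda)=D(0,\lambda)$, and the stated composition multiplicity is well-defined.

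For the \emph{necessity} direction I would apply Corollary~\ref{corw}. If $[C(1,\mu):C(0,\lambda)]\neq 0$ there exist up-down tableaux $\mathbf s\in\mathscr T^{ud}_n(\lambda)$ and $\mathbf t\in\mathscr T^{ud}_n(\mu)$ with $c_{\mathbf s}(k)=c_{\mathbf t}(k)$ for every $k$. Since $|\lambda|=n$, $\mathbf s$ consists solely of box additions and contributes the content multiset $\{q^{2c(p)}:p\in\lambda\}$; since $|\mu|=n-2$, $\mathbf t$ performs exactly $n-1$ additions and one removal, contributing $\{q^{2c(p)}:p\in\mu\}\cup\{q^{2c(p^*)},\,z^{-2}q^{2-2c(p^*)}\}$ where $p^*$ is the box added and then removed. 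Matching the two multisets, together with $e>n$ (which prevents collisions among distinct powers of $q^2$ arising from contents of boxes in a partition of size $\le n$), forces $\mu\subset\lambda$, $\lambda\setminus\mu=\{p_1,p_2\}$, and $z^2=q^{2-2(c(p_1)+c(p_2))}$.

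For the \emph{sufficiency} direction I would use the Gram matrix $G$ of $\phi$ on $V$. By Lemma~\ref{bim1} the associated map $\psi:V\to V$ is an $(\mathcal H_{3,n},\mathcal H_n)$-bimodule homomorphism. Because both Hecke algebras are semisimple, $V$ splits into isotypic bimodule components $V_{\mu,\lambda}$ of type $S^\mu\boxtimes S^\lambda$, on which $\psi$ acts as a scalar $\alpha_{\mu,\lambda}(z,q)$. The cellular filtration of $V$ coming from Theorem~\ref{sstan} identifies the $S^\mu$-isotypic piece with the cell module $C(1,\mu)$ as a right $\mathcal B_n$-module, and the kernel of $\psi$ on $V_{\mu,\lambda}$ controls precisely when the $\mathcal H_n$-isotype $S^\lambda$ descends to a composition factor $C(0,\lambda)$ of $C(1,\mu)$. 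Using the explicit formulas of Lemma~\ref{ne1} and Proposition~\ref{key321}, I would expand $\alpha_{\mu,\lambda}$ as a product of linear factors in $z^2$ whose root locus contains the content condition produced by the necessity step; at that root, a non-trivial submodule isomorphic to $C(0,\lambda)$ appears in $C(1,\mu)$.

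The \emph{main obstacle} is the same-column exclusion: the Jucys--Murphy eigenvalue matching alone still allows $z^2=q^{2-2(c(p_1)+c(p_2))}$ when $p_1,p_2$ share a column, so one must show $\alpha_{\mu,\lambda}$ is a nonzero scalar there. I would dispatch this by specialising at $q\to 1$ with $z=q^m$. By Remark~\ref{gramlimit}, the matrix $G$ degenerates to the Doran--Wales--Hanlon Gram matrix of the Brauer algebra $B_n(m)$, whose composition structure is completely described in \cite[Theorem~3.4]{DWH} and in particular excludes the same-column configuration. Upper semi-continuity of composition multiplicities under this flat specialisation (together with the fact that the $q$-Brauer cell modules have the same rank as the Brauer cell modules) then transfers the column exclusion back to $\mathcal B_n(q,z)$, completing the argument.
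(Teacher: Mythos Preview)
Your necessity argument has a genuine gap. You try to extract both conditions in Definition~\ref{admis} from Corollary~\ref{corw} alone, but the multiset matching you describe does not do this. First, the claim that $e>n$ ``prevents collisions among distinct powers of $q^2$ arising from contents'' is false: contents of boxes in a partition of size $\le n$ range over $[-(n-1),n-1]$, so their differences can be as large as $2n-2$, and $e>n$ does not rule out $q^{2a}=q^{2b}$ for $a\neq b$ in that range. Second, even granting the content matching, equality of content multisets with two boxes removed does not obviously force $\mu\subset\lambda$, and as you yourself note it certainly does not give the ``not in the same column'' condition. Your proposed repair via upper semi\-continuity at $q\to 1$, $z=q^m$ is fragile: for a same\-column pair $p_1=(k,j)$, $p_2=(k+1,j)$ one has $m=1-c(p_1)-c(p_2)=2(k-j)+2$, which vanishes when $j=k+1$, so the limit lands at $z=1$ where $\delta$ is not invertible and the comparison with the Doran--Wales--Hanlon result is unavailable.

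The paper sidesteps all of this with one observation you are missing: because $E_1T_1=qE_1$, the left $\mathcal H_2$-module on $E_1$ is $S^{(2)}$, and therefore $C(1,\mu)\cong \Ind_{\mathcal H_2\otimes \mathcal H_{3,n}}^{\mathcal H_n}\bigl(S^{(2)}\boxtimes S^\mu\bigr)$ as a right $\mathcal H_n$-module. The Littlewood--Richardson rule for the semisimple $\mathcal H_n$ then gives $C(1,\mu)\cong\bigoplus_\gamma S^\gamma$ with $\gamma$ running over partitions obtained from $\mu$ by adding two boxes \emph{not in the same column}, each with multiplicity one. Since any $\mathcal B_n$-composition factor $C(0,\lambda)$ restricts to $S^\lambda$, this immediately forces $\mu\subset\lambda$ and the column condition; Corollary~\ref{corw} is then only used to obtain $z^2=q^{2-2(c(p_1)+c(p_2))}$. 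For sufficiency your outline is close to the paper's, but the paper is more precise than ``expand $\alpha_{\mu,\lambda}$ as a product of linear factors'': it shows each eigenvalue has the form $h_\gamma=\delta+zf_\gamma(q)$ (Proposition~\ref{key321}), so $h_\lambda=0$ determines a unique value of $z^{-2}$ for given $q$; one then checks $h_\lambda=0$ forces $C(1,\mu)_\lambda\subset\text{rad}\,C(1,\mu)$, and finally invokes the already\-proved necessity to identify that unique $z^{-2}$ with $q^{2(c(p_1)+c(p_2))-2}$.
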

\begin{proof}
 Since we are assuming  $e>n$, $\mathcal H_n$ is semisimple and any  cell module $S^\lambda$ of $\mathcal H_n$ is irreducible.
 Recall the cell module $C(1,\mu)$ has a basis $\{E_1x_\mu T_{d(\t)}T_v+\mathcal B_n^{\rhd \mu} \mid (v,\t)\in I(\mu)\}$.
 So, it is easy to check that  $C(1, \mu)\cong  \text{Ind}_{\mathcal H_{2}\otimes\mathcal H_{3, n}
 }^{\mathcal H_n}M\boxtimes N$ as $\mathcal H_n$-modules, where $M$ (resp., $N$) is the $\mathcal H_{2}$-module (resp., $\mathcal H_{3, n}$-module) with basis $\{E_1\}$ (resp., $\{x_\mu T_{d(\t)} +\mathcal H_{3,n}^{\rhd \mu}\mid \t\in \Std(\mu)\}$). Since $M\cong S^\alpha$ with $\alpha=(2)$ (see Definition~\ref{qqq}(2)) and $N\cong S^\mu$, we have  
 $$ C(1,\mu)\cong \text{Ind}_{\mathcal H_{2}\otimes
 \mathcal H_{3, n}}^{\mathcal H_n} S^\alpha \boxtimes S^{\mu}$$
 as right $\mathcal H_n$-modules.
 Using Littlewood-Richardson rule for semisimple Hecke algebra $\mathcal H_n$,
 \begin{equation}\label{iso1} C(1, \mu)\cong \oplus_{\gamma} S^\gamma \end{equation}
 as $\mathcal H_n$-modules,
where $\gamma\in \Lambda^+(n)$   obtained from  $\mu$ by  adding two boxes which are  not at the same column. Moreover,
 the multiplicity of $S^\gamma$ is $1$. So $[C(1, \mu):C(0, \lambda)]\neq 0$ if and only if $[C(1, \mu):C(0, \lambda)]=1$.
  Further, by Corollary~\ref{corw}, $\lambda\setminus \mu=\{p_1, p_2\}$ such that
  \begin{equation}\label{mmm1}  \frac {q^{2c(p_1)}-1}{q-q^{-1}}=\frac{z^{-2} q^{2-c(p_2)}-1}{q-q^{-1}}.\end{equation}
Therefore
  $\lambda$ is $(1, \mu)$-admissible.

We are going to prove $[C(1, \mu): C(0, \lambda)]\neq 0$  if  $\lambda$ is $(1, \mu)$-admissible.
Consider the $(\mathcal H_{3, n}, \mathcal H_n)$-bimodule homomorphism induced by  $\psi: V\rightarrow V$ in Lemma~\ref{bim1} via base change.
 Since we are assuming that $e>n$, both $\mathcal H_n$ and $\mathcal H_{3, n}$ are semisimple. This implies that $V$ is direct sum of cell modules
$C(1, \mu)$ with multiplicity $\dim S^\mu$. Since $\psi$ is a left $\mathcal H_{3, n}$-homomorphism,
the restriction of $\psi $ to $C(1, \mu)$ fixes  $C(1, \mu)$. Let $e_\mu$ be the primitive idempotent  with respect to $S^\mu$.
 Then $S^\mu=e_\mu \mathcal H_n$ and  $\tau(e_\mu)\neq 0$ (see \cite[Lemma~1.6]{Maa}).
 Since $\phi( E_1 e_\mu, E_1 e_\mu )= \delta \tau(e_\mu)\neq 0$,
the restriction of $\psi$ to the cell module $C(1, \mu)$ is non-trivial and hence
\begin{equation}\label{nnn1}  \text{Ker} \psi \cap C(1, \mu) \neq C(1, \mu)\end{equation}
Thanks to \eqref{iso1}, $C(1, \mu)=\bigoplus C(1, \mu)_\gamma$ where $C(1, \mu)_\gamma$ is the right
 $\mathcal H_n$--submodule such that  $$C(1, \mu)_\gamma\cong S^\gamma.$$
  Then
 $C(1, \mu) e_\gamma\mathcal H_n= C(1, \mu)_\gamma$. Since $\psi$ is a right $\mathcal H_{n}$-homomorphism,
  $\psi$ fixes each $C(1, \mu)_\gamma$.
  Pick up a basis of   $C(1, \mu)_\gamma$ for each $\gamma$. The disjoint union of such bases
    forms a
   new basis of $C(1, \mu)$. Note that each $C(1, \mu)_\gamma$ is a right irreducible
  $\mathcal H_n$-module.  With respect to this
 basis, the corresponding matrix with respect to the restriction of $\psi$ to $C(1, \mu)$ is of form
  $\text{diag} (h_\gamma I_\gamma)_\gamma$ where $I_\gamma$ is the
 $\dim S^\gamma\times \dim S^\gamma $ identity matrix and $h_\gamma\in F$.

 Recall that $\{v_{w, d}\mid w\in \mathfrak S_{3, n}, d\in \mathcal D_{1, n}\}$ is a basis of $V$
  and the disjoint union of bases of  $C(1, \mu)_\gamma$ form a new basis of $V$. Moreover, since our computation only involves
Hecke algebra,   each entry in the transition matrix between these two bases of $V$ is of form $f(q)\otimes 1_F$ for some  $f(q)=\frac{g(q)}{h(q)}$  where
  $g(q), h(q)\in \mathbb Z[q]$  such that $ h(q)\otimes 1_F\neq 0$.
  Abusing of notation, we will denote  $a\otimes_\Bbbk 1_F$ by $a$.
 Thanks to Proposition~\ref{key321} each entry in the diagonal (resp., off diagonal) of the  matrix $G$  of $\psi$ with respect to the basis
   is of form $ \delta +zg(q)$ (resp., $zf(q)$) where
 $f(q), g(q)\in F$.
 Let $m=\dim V$.
 So, there exist $f_i(q)\in F$,  $1\leq i\leq m$ with $f_j(q)\neq 0$ for some $j$ such that
 $$G(f_1(q),\ldots,f_m(q))^t=h_{\gamma}(f_1(q),\ldots,f_m(q))^t$$
 for each $\gamma$ in \eqref{iso1}.
 Then $(\delta+zg(q))f_j(q)+zf(q)=h_\gamma f_j(q) $ for some $f(q), g(q)\in F$ and hence
 \begin{equation}\label{skiwjs}
 h_\gamma= \delta+zf_\gamma(q)
 \end{equation}
 for some $f_\gamma(q)\in F$. We explain that there are $z$ and $q$ such that
 $h_\lambda=0$. This can be seen as follows.

 Let  $z=q^{b}$ for some $b\in \mathbb Z$ and consider the classical limit of $\mathcal B_n$. It is the Brauer algebra $B_n(b)$ over $\mathbb Z$.
 The corresponding Brauer algebra $B_n(b)$ over $\mathbb C$ is $B_n(b)\otimes_{\mathbb Z} \mathbb C$.  Unless otherwise stated, we work on Brauer algebra
 $B_n(b)$ over $\mathbb C$ at moment.
 Then
 $$(\lim_{q\rightarrow1}f_1(q),\ldots,\lim_{q\rightarrow1}f_m(q)),$$
 is the corresponding eigenvector of $G_1$ in Remark~\ref{gramlimit} (In fact, it is the corresponding $G_1$ over $\mathbb C$).
Further, $\text{lim}_{q\rightarrow 1}\delta =b$ and
 $\lim_{q\rightarrow1}h_\gamma=b+h $, where $\lim_{q\rightarrow1}f_\gamma(q)=h$.
We may choose $b$ such that $b+h=0$. This implies that $C(1, \mu)_\lambda$ is in the kernel of the
$\psi$  in classical limit case.  Such a kernel is the corresponding  radical of the invariant form $\phi$.
By \eqref{nnn1}, $C(1, \mu)_\lambda$ is a proper submodule of $C(1, \mu)$. If we
 denote $\text{rad} C(1, \mu)$ by the radical with respect to the canonical invariant form on $C(1, \mu)$,  then  $C(1, \mu)_\lambda\subset \text{rad} C(1, \mu)$. Standard arguments on cellular algebras in  \cite{Ruisi}  shows that $\text{rad} C(1, \mu)$ is killed by $E_1$ and so is $ C(1, \mu)_\lambda$. This proves that  $ C(1, \mu)_\lambda\cong C(0, \lambda)$.
By \cite[Theorem~3.2]{DWH},  $b=1-c(p_1)-c(p_2)$ and  $h=-b$. Since $\delta\neq 0$,
$\lim_{q\rightarrow1}f_\lambda(q)=-b\neq 0$.
So  $f_\lambda(q)\neq 0$. Since $\lim_{q\rightarrow1}f_\lambda(q)$ exist, $(q-q^{-1}) f_\lambda(q)\neq 1$.

We may choose $z$ and $q$ such that $z^{-2}=1-(q-q^{-1})f_\lambda(q)\not\in \{0,1\}$. In other words, we can find $z$ and $q$ such that $\delta$ is invertible and
 $h_\lambda=0$. By arguments similar to those for the Brauer algebra above,
   $C(1, \mu)_\lambda\cong C( 0, \lambda)$ and hence $z^2=q^{2-2(c(p_1)+c(p_2))}$ (see  \eqref{mmm1}).
   \end{proof}
\section{Proof of Theorem~\ref{main3}}
In this section, we give a proof of Theorem~\ref{main3}.
For any $(f, \lambda)\in \Lambda_n$, let $G_{f, \lambda}$ be the Gram matrix associated to the canonical invariant form (see \cite[Definition~2.3]{GL}) on $C(f,\lambda)$ with respect to the cellular basis in Theorem~\ref{sstan}.
\begin{Prop}\label{semisim} For $n\ge 2$, let $\mathcal B_n$ be defined   over  $F$  containing  non-zero $q, z, q-q^{-1}, z-z^{-1}$ such that the quantum characteristic $e$ of $q^2$ is strictly bigger than  $n$.   Then
 $\mathcal B_n$ is (split) semisimple if and only if
$\det G_{1, \lambda}\neq 0$ for all $\lambda\in \Lambda^+(k-2) $ and $2\le k\le n$.
\end{Prop}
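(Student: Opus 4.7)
The plan is to apply the cellular semisimplicity criterion to $\mathcal B_n$ and then translate the simplicity of each cell module into a Jucys-Murphy content-collision statement that Theorem~\ref{semi1} converts into the stated Gram-matrix condition. Since $\mathcal B_n$ is cellular by Theorem~\ref{sstan}, standard theory gives that $\mathcal B_n$ is split semisimple over $F$ precisely when every cell module $C(f,\lambda)$ with $(f,\lambda)\in\Lambda_n$ is simple, equivalently when every $\det G_{f,\lambda}\neq 0$. The assumption $e>n$ forces $\mathcal H_n\cong \mathcal B_n/\langle E_1\rangle$ to be semisimple, so the modules $C(0,\lambda)$ with $\lambda\in\Lambda^+(n)$ are automatically simple; only the $f\geq 1$ cell modules carry content.

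For the necessity direction, the $k=n$ instance of the condition is immediate from the cellular criterion applied to $\mathcal B_n$, and the corner isomorphism $\tilde E_1\mathcal B_n\tilde E_1\cong \mathcal B_{n-2}$ of Theorem~\ref{iso111} transmits semisimplicity downward, yielding the condition at every $k$ of the same parity as $n$. For the remaining parities I argue by contradiction: a singular $G_{1,\nu}$ in $\mathcal B_k$ produces a $(1,\nu)$-admissible partition via Theorem~\ref{semi1}, which via Corollary~\ref{corw} gives matching content sequences in $\mathscr T^{ud}_k$ for two tableaux of distinct shapes; one extends both tableaux to length $n$ by a common block of up-moves (chosen by adding boxes in a column far enough from the divergence so that each intermediate shape is a valid partition and the contents still match), and the resulting collision in $\mathscr T^{ud}_n$ forces, by a second application of Corollary~\ref{corw}, a non-simple cell module in $\mathcal B_n$, contradicting semisimplicity.

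For the sufficiency direction, assume the hypothesis and suppose some $C(f,\lambda)$ of $\mathcal B_n$ fails to be simple. The cell structure together with an inductive extension of Corollary~\ref{corw} (covering composition factors $D(\ell,\mu)$ with $0\leq\ell<f$, not just $\ell=0$) produces distinct up-down tableaux $\mathbf s,\mathbf t$ with $c_{\mathbf s}(i)=c_{\mathbf t}(i)$ for all $1\leq i\leq n$. Because $e>n$ makes every Hecke subalgebra $\mathcal H_m$ ($m\leq n$) semisimple, two tableaux sharing intermediate shapes throughout are already separated by Hecke-level JM eigenvalues, so $\mathbf s$ and $\mathbf t$ must first diverge at some step $k^*$ by one adding a box $p_1$ while the other removes a box $p_2$ over their common predecessor $\nu=\mathbf s_{k^*-1}=\mathbf t_{k^*-1}$. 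The matching-contents equation at step $k^*$ is precisely the $(1,\nu)$-admissibility relation $z^2=q^{2-2(c(p_1)+c(p_2))}$ of Definition~\ref{admis}, and Theorem~\ref{semi1} converts this into $\det G_{1,\nu}=0$ in $\mathcal B_{k^*}$ with $\nu\in\Lambda^+(k^*-2)$ and $k^*\leq n$, contradicting the hypothesis.

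The main obstacle is the JM content-matching apparatus at both ends. On the necessity side, one must verify that a pair with matching contents at level $k$ can indeed be extended to length $n$ while preserving both validity of the intermediate shapes and the content match, and that the non-column-alignment clause of Definition~\ref{admis} is automatic from the fact that $\mathbf s_{k^*}$ and $\mathbf t_{k^*}$ are valid partitions. On the sufficiency side, the analogue of Corollary~\ref{corw} for composition factors $D(\ell,\mu)$ with $\ell\geq 1$ is not stated directly in the paper and must be established by induction on $f$, exploiting the functors $\mathcal F,\mathcal G$ of Section~5 together with Proposition~\ref{fg1} to reduce depth-$\ell$ factors of a $\mathcal B_n$-cell module to depth-$(\ell-1)$ factors of a $\mathcal B_{n-2}$-cell module, whose content behaviour is already controlled by the JM-action formulas of Theorem~\ref{trishactio}.
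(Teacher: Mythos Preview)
Your approach is genuinely different from the paper's, and both directions contain real gaps. In the sufficiency direction, your first-divergence argument asserts that the add/remove pair $(p_1,p_2)$ at the common predecessor $\nu$ automatically satisfies the not-same-column clause of Definition~\ref{admis}. This is false: take $\nu=(2,1)$, $p_2=(1,2)$ removable and $p_1=(2,2)$ addable; both $\nu\cup p_1=(2,2)$ and $\nu\setminus p_2=(1,1)$ are valid partitions, yet $p_1,p_2$ lie in the same column. More seriously, it is not clear that the content sum $c(p_1)+c(p_2)$ produced at an arbitrary first divergence is realised by any $(1,\mu')$-admissible pair with $|\mu'|\le n-2$; content collisions can exist even when $\mathcal B_n$ is semisimple (e.g.\ at $n=7$ one can build matching-content tableaux diverging with content sum $-5$, i.e.\ $z^2=q^{12}$, and Theorem~\ref{main3} shows $\mathcal B_7$ is semisimple there). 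So the implication ``matching contents $\Rightarrow$ some $\det G_{1,\mu'}=0$'' that your argument relies on is simply not valid without further input. In the necessity direction for the opposite parity, you invoke Corollary~\ref{corw} in reverse: producing matching-content tableaux of length $n$ and concluding that some cell module of $\mathcal B_n$ is non-simple. But Corollary~\ref{corw} only goes the other way; matching contents do not, by themselves, force a composition-factor relation.

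The paper avoids both problems by never passing through content-matching. For sufficiency it argues by induction on $n$: given $D(\ell,\mu)\subset\mathrm{rad}\,C(f,\lambda)$ with $f\ge 2$, it applies the exact functor $\mathcal F$ of Proposition~\ref{fg1} $\ell$ times to obtain a nonzero map $C(0,\mu)\to C(f-\ell,\lambda)$ in $\mathcal B_{n-2\ell}$; if $\ell>0$ this contradicts semisimplicity of the smaller algebra (by induction), and if $\ell=0$ the branching rule (Theorem~\ref{Burnch}) pushes the failure down to $\mathcal B_{n-1}$. For necessity it applies $\mathcal G$ repeatedly to inflate a nonzero $C(0,\mu)\hookrightarrow C(1,\lambda)$ in $\mathcal B_k$ to a singular Gram matrix in $\mathcal B_n$ when $k\equiv n\pmod 2$, and for the opposite parity it invokes a combinatorial lemma from \cite{Rui} to shift to a $(1,\tilde\lambda)$-admissible pair with $|\tilde\lambda|=|\lambda|\pm1$ before applying $\mathcal G$. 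Your proposed use of $\mathcal F,\mathcal G$ to extend Corollary~\ref{corw} is in fact closer to what is needed, but you should use those functors to reduce to smaller $n$ directly (as the paper does) rather than to manufacture a content statement that then has to be reconverted.
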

\begin{proof}Thanks to Theorem~ \ref{sstan}, $\mathcal B_n$ is a cellular algebra over an arbitrary field.
So any field is a split field of $\mathcal B_n$.
By  \cite[Theorem~3.8]{GL}, $\mathcal B_n$ is  (split) semisimple if and only if
$\prod_{(f, \lambda)\in \Lambda_n} \det G_{f, \lambda}\neq 0$.
Since we are assuming  $e>n$, $\mathcal H_n$ is semisimple
and hence $\prod_{\lambda\in \Lambda^+(n)} \det G_{0, \lambda}\neq 0$ . Consequently,  $\mathcal B_n$ is (split)
 semisimple if and only if $\prod_{(f, \lambda)\in \widetilde{\Lambda_n}} \det G_{f, \lambda}\neq 0$ where $\widetilde {\Lambda_n}=\{ (f, \lambda)\in \Lambda_n\mid f>0\}$.

We prove the result by induction on $n$.  Since we are assuming that $\delta$ is invertible,
$\det G_{1, \emptyset}=\delta\neq 0$.   So $\mathcal B_2$ is always semisimple.  In the following we assume $n>2$ and the result is true for $\mathcal B_r$ with $r\leq n -1$.

``$\Leftarrow$":   If the result  were false, there should be an $(f, \lambda)$ with $f\ge 2$ such that $\det G_{f, \lambda}=0$.
In this case, there is a simple module $D(\ell, \mu)\subset \text{rad} C(f, \lambda)$.
In particular,
$(\ell, \mu)\lhd (f, \lambda)$ in the sense that either $\ell< f$ or $\ell=f$ and $\mu\lhd \lambda$. However,
since $ D(\ell, \mu)\hookrightarrow C(f, \lambda)$, there is a non zero homomorphism from  $ C(\ell, \mu)$ to $ C(f, \lambda)$. Acting the exact functor $\mathcal F$ and using
Proposition~\ref{fg1}(2)-(4) yields a non-zero homomorphism   $ C(0, \mu)\rightarrow C(f-\ell, \lambda)$. We claim $\ell<f$. Otherwise, both  $ C(0, \mu)$ and $C(0, \lambda)$ are simple modules of the semisimple Hecke algebra $\mathcal H_{n-2f}$. So  $\lambda=\mu$ and hence $ [C(f, \lambda): D(f, \lambda)]\ge 2$, a contradiction. Hence $f-\ell >0$.

If $\ell>0$, then  $\det G_{f-\ell, \lambda}=0$, a contradiction since $\mathcal B_{n-2\ell}$
is semisimple by induction assumption. Suppose $\ell=0$.
Thanks to  Theorem~\ref{Burnch}, there are two boxes $p_1$ and $p_2$ such that
either $[C(f, \lambda\setminus p_1): C(0, \mu\setminus p_2)]\neq 0$ or $[C(f-1, \lambda\cup p_1): C(0, \mu\setminus p_2)]\neq 0$.
Since we are assuming $f\ge 2$, $f>f-1\ge 1$. Consequently,  either $\det G_{f, \lambda\setminus p_1}\neq 0$ or
$\det G_{f-1, \lambda\cup p_1}\neq 0$. This is a contradiction since $\mathcal B_{n-1}$ is semisimple by induction assumption.

``$\Rightarrow$"   Suppose $\det G_{1, \lambda}= 0$ for some
  $\lambda\in  \Lambda^+(k), 0\le k\le n-2$. Then $k\neq 0$ and
  there is a monomorphism   $C(0, \mu)\hookrightarrow  C(1, \lambda)$ for some partition $|\mu|=k+2$.  Since $\mathcal B_n$ is semisimple, we have $k<n-2$.
  If $k\equiv n \pmod{2 }$, then we apply the functor $\mathcal G$ repeatedly so as to get a non-zero
  homomorphism from $C(f-1, \mu) $ to $C(f, \lambda)$ where
  $f=\frac12 (n-k)$. So  $\det G_{f, \lambda}=0$ and  $\mathcal B_n$ is not semisimple, a contradiction.
  Suppose  $k\equiv n+1 \pmod{2}$. Since   $D(0, \mu)\hookrightarrow  C(1, \lambda)$,
  by Theorem~\ref{semi1}, $\mu$ is $(1, \lambda)$-admissible.
  By Definition~\ref{admis}, $z^2=q^{2(1-c(p_1)-c(p_2))}$ where $\mu=\lambda\cup \{p_1, p_2\}$.
  The arguments in the proof of \cite[Theorem~4.1]{Rui} shows that  there is  a pair of partitions
  $\tilde \lambda$ and $\tilde \mu$ such that $(0, \tilde \mu)$ is $(1, \tilde \lambda)$-admissible
  and    $|\tilde\lambda|=|\lambda|\pm 1$. By Theorem~\ref{semi1}, there is a non-zero homomorphism
  from $C(0, \tilde \mu)$ to $C(1, \tilde\lambda)$.
   Applying the functor $\mathcal G$ repeatedly yields
   $\det G_{f, \tilde \lambda}=0$ where
  $f=\frac 12(n-|\tilde \lambda|)$. So $\mathcal B_n$ is not semisimple, a contradiction.
    \end{proof}


\noindent\textbf {Proof of Theorem~\ref{main3}}: Thanks to \eqref{quot1},   $\mathcal H_n \cong \mathcal B_n/\langle E_1\rangle$, where $\langle E_1\rangle $ is the
two-sided ideal generated by $E_1$. Therefore, $\mathcal B_n$ is not semisimple if $\mathcal H_n$ is not semisimple. The later is
equivalent to the condition $e\le n$. In summary, we can assume $e>n$ when we discuss semisimplicity of
$\mathcal B_n$ over an arbitrary field. In this case, Proposition~\ref{semisim} gives a necessary and  sufficient
condition for $\mathcal B_n$ being semisimple over an arbitrary field. Thanks to Definition~\ref{admis} and
Theorem~\ref{semi1}, $\mathcal B_n$ is semisimple if and only if
$z^2\neq q^{2a}$ where $$a\in \bigcup_{k=2}^n \{ 1- \sum_{p\in \lambda\setminus \mu} c(p)
\mid \lambda\in \Lambda^+(k),  \mu\subset\lambda,\lambda\setminus \mu =\{p_1, p_2\} \text{ and $p_1, p_2$ are not at the same column} \}$$
By  \cite[Lemma~2.4]{Ruisi}, this is  equivalent to
saying that  $a$ is given in \eqref{paracon}.\qed


\small
\end{document}